\documentclass[11pt]{amsart}
\usepackage{amsfonts,latexsym,amsthm,amssymb,amsmath,amscd,euscript,tikz, tikz-cd}
\usepackage[alphabetic, msc-links, bibtex-style, nobysame]{amsrefs}
\usepackage{stackengine}
\usepackage{framed}
\usepackage{xfrac}
\usepackage[makeroom]{cancel}
\usepackage{faktor}
\usepackage{braket}
\usepackage{pgf,tikz,pgfplots}
\pgfplotsset{compat=1.17}
\usepgfplotslibrary{fillbetween} 
\usepackage{mathrsfs}
\usetikzlibrary{arrows}
\definecolor{wrwrwr}{rgb}{0.3803921568627451,0.3803921568627451,0.3803921568627451}
\definecolor{rvwvcq}{rgb}{0.08235294117647059,0.396078431372549,0.7529411764705882}
\definecolor{mblue}{rgb}{0.2, 0.3, 0.8}
\definecolor{morange}{rgb}{1, 0.5, 0}
\definecolor{mgreen}{rgb}{0.1, 0.4, 0.2}
\definecolor{mred}{rgb}{0.5, 0, 0}
\definecolor{ForestGreen}{RGB}{34,139,34}
\usepackage{hyperref}
\hypersetup{colorlinks=true,citecolor=ForestGreen,linkcolor = blue,urlcolor =black,linkbordercolor={1 0 0}}
\usepackage{float}
\usepackage{mathtools}
\mathtoolsset{showonlyrefs}

\numberwithin{equation}{section}

\usepackage{lmodern}
\usepackage{supertabular}
\usepackage{verbatim}
\usepackage{amssymb}
\usepackage{enumerate}
\usepackage{stmaryrd}
\usepackage{bbm}
\usepackage{mathtools}
\usepackage[nopatch=footnote]{microtype}
\usepackage{setspace}
\usepackage{tikz,tikz-cd}
\usetikzlibrary{matrix,calc,positioning,arrows,decorations.pathreplacing,patterns,knots}

\newcommand{\la}{\langle}
\newcommand{\rg}{\rangle}

\newtheorem{theorem}{{Theorem}}[section]
\newtheorem*{theorem*}{Theorem}
\newtheorem{lemma}[theorem]{Lemma}
\newtheorem{proposition}[theorem]{Proposition}

\newtheorem{corollary}[theorem]{Corollary}
\newtheorem*{corollary*}{Corollary}
\newtheorem*{conjecture}{Conjecture}
\newtheorem*{question}{Question}

\theoremstyle{definition}
\newtheorem{definition}{Definition}
\newtheorem{remark}{Remark}

\usepackage{lmodern,url,enumerate,mathtools,microtype}
\usepackage[hmargin = 1in,vmargin=1in]{geometry}
\usepackage{graphicx}
\usepackage{subcaption}

\newcommand{\ve}{\varepsilon}

\newcommand{\mr}[1]{{\rm #1}}
\newcommand{\cF}{\mathcal{F}}
\newcommand{\cH}{\mathcal{H}}
\newcommand{\cJ}{\mathcal{J}}
\newcommand{\cL}{\mathcal{L}}

\newcommand{\bC}{\mathbb{C}}
\newcommand{\bF}{\mathbb{F}}
\newcommand{\bG}{\mathbb{G}}\newcommand{\bH}{\mathbb{H}}

\newcommand{\bO}{\mathbb{O}}\newcommand{\bP}{\mathbb{P}}
\newcommand{\bR}{\mathbb{R}}
\newcommand{\RR}{\mathbb{R}}
\newcommand{\bS}{\mathbb{S}}\newcommand{\bT}{\mathbb{T}}

\newcommand{\bZ}{\mathbb{Z}}

\newcommand{\nc}{\newcommand}

\nc{\on}{\operatorname}
\nc{\p}{\partial}
\nc{\ol}{\overline}
\nc{\ul}{\underline}
\nc{\pa}{\partial}

\nc{\pb}{\partial_b}
\nc{\pc}{\partial_c}
\nc{\pd}{\partial_d}
\nc{\pe}{\partial_e}
\nc{\pf}{\partial_f}
\nc{\pg}{\partial_g}
\nc{\ph}{\partial_h}
\nc{\pari}{\partial_i}
\nc{\pj}{\partial_j}
\nc{\pk}{\partial_k}
\nc{\pl}{\partial_l}
\nc{\pell}{\partial_\ell}
\nc{\parm}{\partial_m}
\nc{\pn}{\partial_n}
\nc{\po}{\partial_o}
\nc{\pp}{\partial_p}
\nc{\pq}{\partial_q}
\nc{\pr}{\partial_r}
\nc{\ps}{\partial_s}
\nc{\pt}{\partial_t}
\nc{\pu}{\partial_u}
\nc{\pv}{\partial_v}
\nc{\pw}{\partial_w}
\nc{\px}{\partial_x}
\nc{\py}{\partial_y}
\nc{\pz}{\partial_z}

\nc{\Spec}{\on{Spec}}

\nc{\sn}{\mr{sn}}
\nc{\cn}{\mr{cn}}
\nc{\dn}{\mr{dn}}

\nc{\thru}{~\!\!--~\!\!}

\numberwithin{equation}{section}
\makeatletter
\@addtoreset{equation}{section}
\makeatother

\title[Singularity models of the Bernoulli problem]{
Singularity models for the Bernoulli free boundary problem from isoparametric hypersurfaces}
\date{\today}

\author[Benjy Firester]{Benjy Firester}
\address{{\href{mailto:benjyfir@mit.edu}{benjyfir@mit.edu}} \hfill Department of Mathematics, MIT}
\author[Raphael Tsiamis]{Raphael Tsiamis}
\address{
{\href{mailto:r.tsiamis@columbia.edu}{r.tsiamis@columbia.edu}}\hfill Department of Mathematics, Columbia University}
\author[Zihui Zhao]{Zihui Zhao}
\address{{\href{mailto:zhaozh@jhu.edu}{zhaozh@jhu.edu}}\hfill Department of Mathematics, Johns Hopkins University}
\thanks{The third-named author is partially supported by the NSF grant DMS-2350351.}

\begin{document}

\begin{abstract}
    We develop a general construction of homogeneous solutions to the Bernoulli free boundary problem, as well as general extremal domains on the sphere, from isoparametric foliations of the sphere. Our construction produces rich families of infinitely many new examples with sophisticated topologies connected to minimal surfaces of the sphere by smooth families of interpolating capillary surfaces, which include novel singularity models in low dimensions and recover most known homogeneous one-phase solutions. We also introduce a geometric representation for every such homogeneous map and establish a geometric rigidity theorem: for every prescribed isoparametric foliation, and in particular for cohomogeneity-one subgroups of $O(n)$, its radial geometry uniquely determines the corresponding homogeneous solution. Finally, we study the density of the constructed solutions and conjecture that the lowest density among non-flat cones in a given dimension $n \geq 5$ is attained by an $O(k) \times O(n-k)$-invariant cone.
\end{abstract}

\maketitle

\section{Introduction} 
We develop a general framework to construct intricate singularity models for the Bernoulli free boundary problem. 
The Bernoulli free boundary problem studies the critical points, stable points, and minimizers of the Alt-Caffarelli energy
\[ 
\cJ(u, \Omega_0) := \int_{\Omega_0} |\nabla u|^2 + \chi_{\{u>0\}} \, dx, 
\]
subject to a prescribed Dirichlet boundary condition. 
A function $u$ that is a critical point of the Alt-Caffarelli energy solves the overdetermined elliptic problem
\begin{equation}\label{eqn:bernoulli-problem}
\left\{\begin{array}{ll}
	\Delta u = 0, & \text{ in } \{ u>0\} \cap \Omega_0, \\
	\;|\nabla u| = 1, & \text{ on } \partial\{u>0\} \cap \Omega_0.
\end{array} \right. 
\end{equation} 
The work of Alt-Caffarelli and Weiss~\cites{AC,Weiss} reduce the study of the boundary regularity (for minimizers), i.e.~the regularity of the \emph{free boundary} $\partial\{u>0\}$, via a blow-up argument, to the existence of non-flat (minimizing) homogeneous solutions of degree one, namely
\begin{equation}\label{eq:hom}
	\left\{\begin{array}{ll}
	\;\Delta U = 0, & \text{ in } \{ U>0\} \subset \RR^n, \\
	\;\;|\nabla U| = 1, & \text{ on } \partial\{U>0\}, \\
	U(\rho \omega) = \rho \, U(\omega), & \text{ where } \rho = |x|, \; \omega = \frac{x}{|x|}.
	\end{array} \right. 
\end{equation} 
For this reason, homogeneous solutions to \eqref{eq:hom}, or equivalently, their restriction $\phi := U|_{\bS^{n-1}}$ to the sphere satisfying
\begin{equation}\label{eq:homsph}
	\left\{\begin{array}{ll}
	-\Delta_{\mathbb S^{n-1}} \phi = (n-1) \phi, & \text{ in } \{\phi>0\} \\
	|\nabla \phi|= 1, & \text{ on } \partial \{\phi>0\}
\end{array}
 \right. 
\end{equation}
are exactly the models of singularities of solutions at the free boundary.

The axisymmetric \textit{De Silva-Jerison cone} was proved to be energy minimizing in dimension $n \geq 7$ and unstable in dimension $n < 7$, see \cites{desilva-jerison-cones, CJK}. 
Recently,~\cites{FTW-1,FTW-stability-one-phase} constructed new families of homogeneous solutions and proved their \textit{strict} minimality, including a second minimizing example in the critical dimension $7$. 
On the other hand, Caffarelli-Jerison-Kenig and Jerison-Savin~\cites{CJK, jerison-Savin} demonstrated that all stable cones in dimensions $n=3$ and $4$, respectively, must be flat, thus showing that the critical dimension for singularity formation is at least $5$.
Understanding this gap and the optimal regularity is a major open question in the study of the one-phase free boundary problem. 
Recent works show that one-phase cones appear as one endpoint of a geometric interpolation through capillary surfaces with minimal surfaces as the other endpoint, where the optimal regularity in dimension $7$ is a classical result, see ~\cites{FTW-1,new-minimal-surfaces}, as well as~\cite{capillary-FBP}, which constructs the minimal capillary cone analogues generated by isoparametric foliations.
We also highlight other notable connections between the one-phase free boundary problem and the minimal surface theory, including the construction of solutions and the uniqueness and rigidity of singularity models~\cites{entire-hairpins, traizet , jerison-kamburov, n-dim-catenoid, one-phase-simon-solomon}.

Besides the De Silva-Jerison cone, other homogeneous solutions and their stability have been studied in \cites{hong-singular, FTW-stability-one-phase}, as well as other interesting examples in low dimensions in \cite{cubicFBP} (in $\RR^5$) and \cite{hines-kolesar-mcgrath} (in $\RR^3$ and $\RR^4$).
In this paper, we develop a general construction of new homogeneous solutions in $\RR^n$, coming from isoparametric foliations of the sphere $\bS^{n-1}$.
Our technique recovers all previously known examples, with the exception of~\cite{hines-kolesar-mcgrath}, whose free boundary on the sphere $\bS^2$ has many connected components. 
Notably, our construction produces novel examples in $\bR^5$ and $\bR^6$.
In a follow-up paper \cite{six-way}, we will study the stability, minimality, and linear spectrum of these solutions.
In particular, we prove that these solutions are indeed unstable in low dimensions, supporting the expectation that $n=7$ is the critical dimension for regularity.

From another perspective, the equation \eqref{eq:homsph} is also an example of a shape optimization problem. 
Given a complete Riemannian manifold $(N,g)$ and some $0 < m < \textup{Vol}(N,g)$, the \textit{shape optimization problem} seeks minimizers or critical points of the first Dirichlet eigenvalue $N \supset \Omega \mapsto \lambda_1(\Omega)$, among connected domains with prescribed volume $\operatorname{vol}(\Omega) = m$. 
When $N = \mathbb{S}^{n-1}$, the well-known Faber-Krahn inequality shows that the minimizers are geodesic balls.
More generally, a domain $\Omega\subset \mathbb{S}^{n-1}$ with smooth boundary is a critical point with respect to volume-preserving deformations if and only if its first Dirichlet eigenfunction $\phi$ solves the overdetermined problem
\begin{equation}\label{eq:extremal}
	\left\{\begin{array}{ll}
	-\Delta_{\mathbb S^{n-1}} \phi = \lambda_1(\Omega) \phi, & \text{ in } \Omega \\
    \phi= 0, & \text{ on } \partial\Omega \\
	\frac{\partial \phi}{\partial\nu}= -1, & \text{ on } \partial \Omega,
\end{array}\right. 
\end{equation}
for $\nu$ the outward-pointing unit normal.
The sets $\Omega$ supporting such solutions are called \emph{extremal domains} in the literature. 
Thus, singular models of the Bernoulli free boundary problem correspond to extremal domains in the sphere, whose first Dirichlet eigenvalue is equal to $(n-1)$.
For any $\lambda>0$, our construction also gives a rich family of extremal domains with first Dirichlet eigenvalue $\lambda$.

We present a construction of novel solutions of the above problems, notably~\eqref{eq:hom} and ~\eqref{eq:extremal}, with complicated topology, which utilizes the geometry of isoparametric foliations.
An isoparametric foliation of the sphere consists of parallel hypersurfaces with constant mean curvature, together with two \textit{focal submanifolds} of higher codimension.
Every isoparametric foliation is equipped with parameters $(g,m_1,m_2)$, where $g \in \{ 1,2,3,4, 6\}$ denotes the number of distinct principal curvatures and $M_1, M_2$ are the focal submanifolds of codimensions $m_1+1$ and $m_2+1$, respectively, such that $m_1 + m_2 = \frac{2(n-2)}{g}$.
In Section~\ref{subsec:isoparametric}, we discuss important properties of such foliations.

For any given isoparametric parameters $(g, m_1, m_2)$ with $g \geq 2$, we consider the hypergeometric function $f_{M}$ with a zero at $t_{M} > 0$, and define
\allowdisplaybreaks{
\begin{equation}\label{eqn:hypergeometric-isoparametric}
\begin{split}
    f_{M_1}(t) &:= {}_2 F_1 \Bigl( \frac{n-1}{g} , - \frac{1}{g} ; \frac{m_1 + 1}{2} ; t^2 \Bigr) = {}_2F_1 \Bigl( \frac{m_1 + m_2}{2} + \frac{1}{g} , - \frac{1}{g} ; \frac{m_1 + 1}{2} ; t^2 \Bigr), \\
    c_{M_1} &:= \frac{2}{g} \frac{1}{\sqrt{1 - t_{M_1}^2} \, |f'_{M_1}(t_{M_1})|}.
\end{split}
\end{equation}}
Let $f_{M_2}, c_{M_2}$ denote the pair $(f_{M_1}, c_{M_1})$ after exchanging $m_1$ and $m_2$, namely
\begin{equation}\label{eqn:hypergeometric-isoparametricconj}
    f_{M_2}(t) := {}_2F_1 \Bigl( \frac{m_1 + m_2}{2} + \frac{1}{g} , - \frac{1}{g} ; \frac{m_2+1}{2} ; t^2 \Bigr), \qquad c_{M_2} = \frac{2}{g} \frac{1}{\sqrt{1 - t_{M_2}^2} |f'_{M_2} (t_{M_2})|} .
\end{equation}
where $t_{M_2}$ is the unique positive root of $f_{M_2}(t)$.

Geometrically, the transposition $m_1 \leftrightsquigarrow m_2$ preserves the global isoparametric foliation associated to $M$, while exchanging the two focal submanifolds $M_1$ and $M_2$.
In our situation, only one of the focal submanifolds $M_i$ will be contained in the positivity set of $f_{M_1}$ and $f_{M_2}$, respectively.

\begin{theorem}\label{thm:isoparametric-foliation-one-phase-cone}
Let $M \subset \bS^{n-1}$ be an isoparametric hypersurface with parameters $(g, m_1, m_2)$, producing a foliation of $\bS^{n-1}$ with normal foliation parameter $s \in [0, \frac{\pi}{g}]$.
For $g \geq 2$, there exist exactly three homogeneous solutions of the one-phase free boundary problem in $\bR^n$ whose restriction to the unit sphere is constant along leaves of the isoparametric foliation generated by $M$:
\begin{align}
    U_{M_1} (x) &:= c_{M_1} \, \rho  f_{M_1} \Bigl( \sin \frac{g s(\omega)}{2}  \Bigr), \qquad  U_{M_2} (x) := c_{M_2} \, \rho  f_{M_2} \Bigl( \cos \frac{g s(\omega)}{2}  \Bigr)  \tag{OP.I}\label{eqn:U-m-foliation} \\
    \bar{U}_M (x) &:= \bar{c}_M \, \rho \bar{f}_M \Bigl( \sin \frac{g s(\omega)}{2}  \Bigr) , \tag{OP.II}\label{eqn:U-bar-m-second-solution}
\end{align}
where for each $x = \bR^n$ we denote $\rho := |x|$ and $\omega := \frac{x}{|x|}$.
The functions $f_{M_1}(t), f_{M_2}(t)$ are given by~\eqref{eqn:hypergeometric-isoparametric} and \eqref{eqn:hypergeometric-isoparametricconj}, while $\bar{f}_M(t)$ has two zeroes at $\bar{t}_1 < \bar{t}_2 \in (0,1)$ and $\bar{c}_M = \frac{2}{g} \frac{1}{\sqrt{1 - \bar{t}_i^2} \, |\bar{f}'_M ( \bar{t}_i) |}$.

For the Type I solution $U_{M_i}$, the free boundary is the cone over a regular leaf $M_s$; for the Type II solution $\bar{U}_M$, it is the cone over the disjoint union of two regular leaves $M_{s_1} \sqcup M_{s_2}$. 
\end{theorem}
When $g=2$, \eqref{eqn:U-m-foliation} recover the solutions studied in \cite{FTW-stability-one-phase}, which are proved to be unstable when $n\leq 6$ and strictly stable when $n\geq 7$. 
Their (strict) minimality is further studied in \cite{FTW-1}. In \cite{hong-singular}, Hong studied homogeneous solutions that correspond to \eqref{eqn:U-bar-m-second-solution} in the case $g=2$ and $m_1=m_2$.
When $g=3$ and $n=5$, \eqref{eqn:U-m-foliation} recovers the solution constructed in \cite{cubicFBP}. 
Our constructions give completely new solutions in every dimension $n\geq 5$.
We note that if $m_1=m_2$, the two Type I profiles $f_{M_1}, f_{M_2}$ in \eqref{eqn:U-m-foliation} coincide after a change of variables $t \leftrightsquigarrow \sqrt{1-t^2}$. However, the two focal manifolds of the foliation may not be isometric even when $m_1=m_2$, so they can produce different homogeneous solutions; see Section \ref{section:preliminaries}.

More generally, we also construct families of extremal domains on the sphere.

\begin{theorem}\label{thm:extremal}
Consider an isoparametric foliation $\{M_s\}_{s\in [0,\frac{\pi}{g}]}$ of $\mathbb{S}^{n-1}$ with parameters $(g,m_1, m_2)$, and given $\lambda>0$. There exist exactly three solutions $\phi$ to \eqref{eq:extremal} that are constant along leaves of the foliation, and their corresponding extremal domains $\Omega :=\{\phi>0\}$ are
    \[ \bigcup_{0\leq s< \tau_1}M_s, \qquad \bigcup_{\sigma_2< s\leq \frac{\pi}{g}}M_s, \qquad \bigcup_{\bar\sigma< s<\bar\tau} M_s \, . \]
    Moreover, the endpoints $\tau_1, \sigma_2 , \bar\sigma, \bar\tau \in (0,\frac{\pi}{g})$ are uniquely determined by $(g,m_1,m_2)$ and $\lambda$.
\end{theorem}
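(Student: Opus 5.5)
Since $\phi$ is constant along leaves, I will write $\phi(\omega)=h(s(\omega))$ with $s\in[0,\frac{\pi}{g}]$ the normal (distance) parameter of the foliation. The isoparametric structure gives the Laplacian of such a function explicitly: leaves have constant mean curvature, and $M_s$ has principal curvatures $\cot(s+\frac{k\pi}{g})$ with multiplicities alternating $m_1,m_2$. Summing gives the standard formula
\[
\Delta_{\bS^{n-1}}\phi = h''(s) + \frac{g}{2}\Bigl((m_1+m_2)\cot(gs) + \frac{m_1-m_2}{\sin (gs)}\Bigr)h'(s).
\]
So \eqref{eq:extremal} becomes a singular Sturm--Liouville ODE
\[
(w h')' + \lambda w h = 0,\qquad w(s)=\sin^{m_1}\!\bigl(\tfrac{gs}{2}\bigr)\cos^{m_2}\!\bigl(\tfrac{gs}{2}\bigr),
\]
with regular singular points at $s=0$ and $s=\frac{\pi}{g}$. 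Substituting $t=\sin\frac{gs}{2}$ converts it into a hypergeometric equation in $t^2$, with parameters $a+b=\frac{m_1+m_2}{2}$ and $ab=-\frac{\lambda}{g^2}$, generalizing \eqref{eqn:hypergeometric-isoparametric}.

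\textbf{Classification of connected components.} Since $\Omega$ is a union of leaves and $\phi>0$ on it, each component of $\Omega$ has one of three forms: $[0,\tau)$ containing the focal submanifold $M_1$ (at $s=0$); $(\sigma,\frac{\pi}{g}]$ containing $M_2$; or an annulus $(\sigma,\tau)$ between two regular leaves. The whole sphere is excluded because it has no boundary. The first eigenfunction is positive on the connected set $\Omega$, and $\Omega$ cannot have two components, since then $\lambda_1$ would not be simple with a positive eigenfunction of the stated form. Smoothness of $\phi$ at a focal submanifold forces the regular (bounded) Frobenius solution there. Thus:
\begin{itemize}
\item In case one, $h$ is a multiple of the regular solution $f_1(s)$ at $0$, and $\tau_1$ must be its first zero.
\item In case two, the same holds symmetrically at $\frac{\pi}{g}$.
\item In case three, $h$ solves the Dirichlet problem on $(\sigma,\tau)$ with $\lambda$ as its first eigenvalue.
\end{itemize}
In every case the Neumann value $-1$ is then obtained by scaling $h$. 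So the remaining task is to show that exactly one admissible endpoint configuration exists for each given $\lambda$. One subtlety: the Neumann condition has to hold with the same constant on both boundary leaves. Since $\partial_\nu\phi=\mp h'$, case three requires $h'(\sigma)=-h'(\tau)$.

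\textbf{Cases one and two.} Let $\mu_1(\tau)$ be the first Dirichlet eigenvalue of the weighted problem on $[0,\tau)$, with the natural regularity condition at $0$. By domain monotonicity, $\mu_1$ is continuous and strictly decreasing. As $\tau\to0$ it tends to $+\infty$ (scaling near a codimension-$(m_1+1)$ point). As $\tau\to\frac{\pi}{g}$ it tends to $0$, since the constant is the first eigenfunction on the whole sphere. Hence there is a unique $\tau_1$ with $\mu_1(\tau_1)=\lambda$; equivalently, $\tau_1$ is the first zero of the regular solution $f_1$, which exists by Sturm comparison. Case two is the same argument at $\frac{\pi}{g}$.

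\textbf{Case three (main obstacle).} Here there are two unknowns, $\sigma$ and $\tau$, with two conditions: $\lambda_1(\sigma,\tau)=\lambda$ and the flux balance $h'(\sigma)=-h'(\tau)$. I would fix $\sigma\in(0,\frac{\pi}{g})$ and let $\tau(\sigma)$ be the first zero after $\sigma$ of the solution $y_\sigma$ with $y_\sigma(\sigma)=0$, $y_\sigma'(\sigma)=1$. This $\tau(\sigma)$ exists for $\sigma$ in a maximal interval $(0,\sigma^*)$, where $\sigma^*$ is the unique point such that $y_{\sigma^*}$ is positive up to $\frac{\pi}{g}$ and regular there; this is exactly the case-two configuration reversed. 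Then I study
\[
F(\sigma)=\log|y_\sigma'(\tau(\sigma))|.
\]
Integrating $(wy')'=-\lambda w y$ against the solution shows that $w(\tau)|y'(\tau)|^2$ and $w(\sigma)$ are related by a Wronskian-type identity. For the endpoint behaviour:
\begin{itemize}
\item As $\sigma\to0$, the configuration degenerates to case one, and $|y_\sigma'(\tau)|\to0$ relative to $y_\sigma'(\sigma)=1$, because the singular weight $w(\sigma)\to0$ means the flux is carried entirely to $\tau$.
\item As $\sigma\to\sigma^*$, we get $\tau\to\frac{\pi}{g}$ and $|y'(\tau)|\to\infty$.
\end{itemize}
By continuity, $F$ takes the value $0$, which gives existence. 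For uniqueness I would show that $F$ is strictly monotone, by differentiating in $\sigma$ via the Hadamard variation formula $\partial_\sigma\lambda_1=w(\sigma)h'(\sigma)^2/\|h\|^2$ together with the flux identity. I expect this monotonicity to be the delicate step. If it resists a direct argument, I would fall back on the explicit hypergeometric representation in $t=\sin\frac{gs}{2}$ and on the convexity properties of $\log w$.
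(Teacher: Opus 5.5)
\textbf{Overall structure.} Your architecture matches the paper's. The two one-ended domains come from the solution regular at a focal submanifold and its first zero. The band comes from shooting $y_\sigma(\sigma)=0$, $y_\sigma'(\sigma)=1$, with the terminal slope $-y_\sigma'(\tau(\sigma))$ sweeping $(0,\infty)$ as $\sigma$ ranges over $(0,\sigma^*)$, where $\sigma^*=\sigma_2$.

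The existence half is fine in outline, with one correction. The limit as $\sigma\downarrow 0$ is not a ``flux transfer'' effect. What you need is the quantitative bound $0<y_\sigma(c)\le w(\sigma)\int_\sigma^c w^{-1}\to 0$ together with continuous dependence on a fixed interval $[c,\tau(c)]$. The limit at $\sigma^*$ then follows from this one by the reflection $s\mapsto \frac{\pi}{g}-s$.

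\textbf{The gap: uniqueness of the band.} The statement asserts \emph{exactly} three solutions with $\bar\sigma,\bar\tau$ uniquely determined. For the band, this rests entirely on strict monotonicity of $\sigma\mapsto -y_\sigma'(\tau(\sigma))$, which you leave open. The Hadamard route does not close on its own. Along the curve $\lambda_1(\sigma,\tau(\sigma))=\lambda$ it gives $\tau'(\sigma)=w(\sigma)/\bigl(w(\tau)\,y_\sigma'(\tau)^2\bigr)>0$, so $\tau$ is increasing. But then
\[
\frac{d}{d\sigma}\log y_\sigma'(\tau)^2=-H(\sigma)+H(\tau)\tau'-\frac{\tau''}{\tau'},
\]
which involves $\tau''$. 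That is second-variation information the first-variation formula does not supply. As written, your argument proves at least one band solution, not exactly one.

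\textbf{The missing idea.} The paper's Lemma~\ref{lemma:uniqueness-solution} exploits strict monotonicity of the leaf mean curvature $H=-(\log w)'$:
\[
H'=\tfrac{g^2}{4}\bigl(m_2\sec^2\tfrac{gs}{2}+m_1\csc^2\tfrac{gs}{2}\bigr)>0,
\]
that is, strict concavity of $\log w$.

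First, this gives log-concavity of the positive arc. The quantity $\psi=(y')^2-yy''$ satisfies $(w\psi)'=-wH'yy'$. Hence $w\psi$ is minimized at the unique critical point of $y$, where $\psi=\lambda y^2>0$.

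Next, pass to Pr\"ufer variables: $E=(y')^2+\lambda y^2$, $y'=\sqrt{E}\cos\theta$, $\sqrt\lambda\,y=\sqrt{E}\sin\theta$. One finds $\theta'=\sqrt\lambda-\frac12 H\sin2\theta=\sqrt\lambda\,\psi/E>0$ and $(\log E)'=2H\cos^2\theta$. So $\theta$ runs monotonically from $0$ to $\pi$ on $[\sigma,\tau(\sigma)]$, and
\[
\log y_\sigma'(\tau(\sigma))^2=\int_0^\pi \frac{2H(s_\sigma(\theta))\cos^2\theta}{\sqrt\lambda-\frac12 H(s_\sigma(\theta))\sin 2\theta}\,d\theta .
\]
This is an integral over a $\sigma$-independent range. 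The $s$-derivative of the integrand is $2\sqrt\lambda\,H'\cos^2\theta/\bigl(\sqrt\lambda-\frac12 H\sin2\theta\bigr)^2\ge 0$, vanishing only at $\theta=\frac{\pi}{2}$. Moreover, $J=\partial_\sigma s_\sigma(\theta)$ solves a linear ODE with $J(0)=1$, so $J>0$; this also gives $\tau'=J(\pi)>0$. Differentiating in $\sigma$ therefore shows the terminal slope is strictly increasing, so $\sigma_*$ is unique.
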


\subsection{Density and complexity of homogeneous solutions}
We recall the \textit{Weiss monotonicity formula}, which says that for a given variational solution $u$ of the problem~\eqref{eqn:bernoulli-problem}, a free boundary point $x\in \partial\{u>0\}\cap \Omega_0$, and any radius $r$ such that $B(x,r) \subset \Omega_0$, the quantity
\begin{equation}\label{eqn:weiss-quantity}
    W(u;x, r) := \frac{1}{r^n}\int_{B_r(x)} (|\nabla u|^2 + \chi_{\{u > 0\}}) - \frac{1}{r^{n+1}}\int_{\partial B_r(x)} u^2 \, d \cH^{n-1}
\end{equation}
is monotone non-decreasing, with equality $W(u;x, r_1) = W(u;x, r_2)$ for some $r_1 < r_2$ if and only if $u$ is one-homogeneous with vertex at $x$, cf.~\cites{Weiss, KW-variation}.
Moreover, the blow-up analysis shows that for any sequence $r_k \to 0$, modulo passing to a subsequence $\frac{u(x+r_k z)}{r_k} $ converges to a one-homogeneous solution $U$ of 
\eqref{eq:hom}, called a \emph{tangent function} of $u$ at $x$. 
In general, the homogeneous solution $U$ depends on the sequence of radii $r_k$ and may not be unique; however for any tangent function $U$, the area ratio of its positive set is uniquely determined as
\begin{equation}\label{eqn:define-density}
\Theta(U) := \frac{\mathcal{H}^{n-1}(\{U>0\} \cap \mathbb{S}^{n-1})}{\mathcal{H}^{n-1}(\mathbb{S}^{n-1})} = \frac{|\{U>0\}\cap B_1 |}{\omega_n} = \frac{1}{\omega_n} \lim_{r \downarrow 0} W(u;x,r),
\end{equation}
where $|\cdot |$ denotes the Lebesgue measure in $\RR^n$. 
We call $\Theta(U)$ the \textit{density} of the solution $U$, which quantifies the complexity and the energy level of the free boundary point $x$.

We recall that homogeneous solutions of the Bernoulli problem are equivalent to solutions of the shape optimization problem~\eqref{eq:homsph} for a spherical domain $\{ U > 0 \} \cap \bS^{n-1}$ with first Dirichlet eigenvalue $(n-1)$.
Sperner's isoperimetric inequality on $\bS^{n-1}$ implies that such domains satisfy
\[ \mathcal{H}^{n-1}(\{U>0\} \cap \mathbb{S}^{n-1}) \geq \mathcal{H}^{n-1}( \mathbb{S}^{n-1}_+), \]
with equality if and only if $U= \la a, x \rg_+$ for some unit vector $a \in \mathbb{R}^n$, and hence $\Theta (U) \geq \frac{1}{2}$, with equality if and only if $U$ is the flat solution.
In fact, the work of De Silva~\cite{desilva-allard}*{Theorem 1.1} implies the existence of a dimensional constant $\epsilon_n > 0$ such that any non-flat one-phase cone in $\bR^n$ has density $\Theta(U) \geq \frac{1}{2} + \epsilon_n$; see also~\cite{gafa-velichkov}*{Lemma 5.3}.
Therefore, it is natural to ask:
\begin{question}
Among non-flat homogeneous solutions in $\mathbb{R}^n$, which one has the smallest density?
\end{question}
This problem is analogous to the Yau-Solomon conjecture in minimal surface theory, which asserts that the smallest density ratio among non-flat minimal hypercones is attained by the Simons cone $C(\bS^k \times \bS^k)$ in $\bR^{2(k+1)}$ and by the cone $C(\bS^k \times \bS^{k+1})$ in $\bR^{2k+3}$, cf.~\cite{Yau-openproblem}*{Problem 31}. 
This conjecture has been confirmed asymptotically as the ambient dimension tends to $\infty$, thanks to the important works~\cites{Ilmanen-White-density, Zhu-density, Bernstein-Wang-density}, which apply ideas from mean curvature flow.

The density conjecture is also closely related to the shape optimization problem in the sphere, which seeks the minimal or critical values of the first Dirichlet eigenvalue $\lambda_1(\Omega)$, among domains $\Omega$ with fixed $\mathcal{H}^{n-1}$-area. 
In our situation, the first Dirichlet eigenvalue $\lambda_1(\Omega)$ is prescribed as $(n-1)$, due to the homogeneity of the free boundary problem, and we are interested in the smallest area that can be achieved by non-trivial, i.e., non-geodesic, extremal domains.

It has long been believed among experts on free boundary problems that the axisymmetric cone plays an analogous role for the Bernoulli problem to the one played by the Simons cone for minimal surfaces, likely due to its discovery as the first minimizing example in~\cite{desilva-jerison-cones}.
However, the work of the first two authors with Yipeng Wang~\cites{FTW-1 , FTW-stability-one-phase } indicates a different picture: the Type I cones with bi-orthogonal symmetry, corresponding in the notation of Theorem~\ref{thm:unified-theorem} to $U_{M_i}$ with $(M_1, M_2) \cong (\bS^{m_2}, \bS^{m_1})$ the focal submanifolds of the foliation of $\bS^{n-1}$ by Clifford tori $\bS^{m_1}(\sin s) \times \bS^{m_2}(\cos s)$, are also strictly stable and strictly minimizing in dimension $n \geq 7$.
In particular, the cone $U_{\bS^1 \subset \bS^{n-1}}$ with $(g,m_1,m_2) = (2, n-3,1)$, denoted by $U_{n,n-2}$ in~\cite{FTW-stability-one-phase}, maximizes the first stability eigenvalue among this family.
Driving this paradigm shift further, numerical evidence suggests that the axisymmetric solution does not minimize the density among cones in dimensions $n \geq 5$. Moreover, asymptotically as $n \to \infty$, we prove the following in Section \ref{section:density-of-cones}.
 
\begin{proposition}\label{prop:intro-density}
    Let $\bar{\Theta}_{1,n-2,n-2}$ denote the density of the axisymmetric De Silva-Jerison cone, and $\Theta_{2,m_1,m_2}$ denote the density of the Type I solution constructed in Theorem \ref{thm:isoparametric-foliation-one-phase-cone} with parameters $(g=2, m_1, m_2)$. They have the asymptotics
    \[ \lim_{m_1, m_2 \to \infty} \Theta_{2,m_1,m_2} \simeq 0.77785 < 0.80876 \simeq \lim_{n \to \infty} \bar{\Theta}_{1,n-2,n-2}. \]
\end{proposition}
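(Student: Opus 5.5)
The plan is to compute both densities as weighted one–dimensional integrals over the normal foliation parameter $s$, and then to pass to the limit by a Gaussian concentration (Laplace-method) argument. Under this rescaling the ODE for the profile becomes a Hermite-type equation. For $g=2$ the leaves are $\bS^{m_1}(\sin s)\times\bS^{m_2}(\cos s)$, so by \eqref{eqn:define-density} and Theorem~\ref{thm:isoparametric-foliation-one-phase-cone}
\[
\Theta_{2,m_1,m_2}=\frac{\int_0^{\tau} \sin^{m_1}s\cos^{m_2}s\,ds}{\int_0^{\pi/2}\sin^{m_1}s\cos^{m_2}s\,ds}
\]
(or the complementary interval, depending on which focal set is enclosed). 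Here $\tau$ is determined by the zero $t_{M}$ of the hypergeometric profile via $\sin s=t$ (since $gs/2=s$). For the axisymmetric cone the weight is $\sin^{n-2}s$ on $[0,\pi]$, and the domain is a band between the two zeros of $\bar f$ from (OP.II).

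First I would rewrite the profile equation $-\Delta_{\bS^{n-1}}\phi=(n-1)\phi$ in the variable $s$:
\[
\phi''+(m_1\cot s-m_2\tan s)\phi'+(n-1)\phi=0.
\]
With $N=m_1+m_2$, the weight $\sin^{m_1}s\cos^{m_2}s$ is maximized at $s_0$ with $\sin^2 s_0=m_1/N$. Near $s_0$ it equals $\exp(-N(s-s_0)^2+o(1))$ on the scale $s-s_0=O(N^{-1/2})$. Linearizing the drift, whose derivative at $s_0$ is $-2N$, and setting $x=\sqrt{2N}(s-s_0)$ gives the limiting equation
\[
\psi''-x\psi'+\tfrac12\psi=0,
\]
whose invariant measure is the standard Gaussian. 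The admissible solution is selected by regularity at the enclosed focal submanifold, which corresponds to polynomial (rather than $e^{x^2/2}$) growth as $x\to-\infty$. This is a parabolic-cylinder/Hermite function of index $\tfrac12$ with a unique real zero $x_*$. The free-boundary condition $|\nabla\phi|=1$ only fixes the normalization $c_{M_i}$, so it does not affect the location of the zero. I then expect
\[
\Theta_{2,m_1,m_2}\to \frac1{\sqrt{2\pi}}\int_{-\infty}^{x_*}e^{-x^2/2}\,dx ,
\]
independently of the ratio $m_1/m_2$, which explains why the double limit exists. I would evaluate this numerically to get $\simeq0.77785$.

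The same scheme applies to the axisymmetric family. The weight $\sin^{n-2}s$ concentrates at $s=\pi/2$ with variance $1/n$, and the linearized equation becomes a Hermite equation in $x=\sqrt{n}(s-\pi/2)$ with eigenvalue parameter fixed by $(n-1)/n\to1$. The relevant solution is the even one (symmetric band, both zeros $\pm\bar x$), possibly after correcting lower-order terms. The limit density is then a two-sided Gaussian integral determined by the zero $\bar x$; I would check numerically that it equals $\simeq0.80876$. I will have to be careful here, because the naive even solution $1-x^2$ gives $P(|Z|<1)$, which does not match. This suggests that either the complementary region or the next-order correction in the eigenvalue must be tracked, and getting this identification right is part of the work.

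The main obstacle is making the limit rigorous. This requires showing that the zero $\tau$ of the true profile satisfies $\sqrt{2N}(\tau-s_0)\to x_*$, and that the weighted integrals outside an $O(N^{-1/2+\delta})$ window are negligible. For the first point I would use the explicit ${}_2F_1$ representation \eqref{eqn:hypergeometric-isoparametric}, either through the known uniform asymptotics of hypergeometric functions with large parameters in terms of parabolic cylinder functions, or through a Sturm comparison argument: compare the true ODE with the linearized one on the window, using the regularity condition to pin the solution branch. This gives convergence of the zero. The tail estimates then follow from the exponential decay of the weight away from $s_0$.
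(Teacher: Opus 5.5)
\textbf{Type I half.} This part follows the paper's proof of Proposition~\ref{prop:m1,m2-to-infty} essentially step for step. The paper works in $t=\sin s$ with weight $t^{m_1}(1-t^2)^{\frac{m_2-1}{2}}$ recentered at its maximum. It arrives at the same limit equation $\Psi''-x\Psi'+\tfrac12\Psi=0$, selects $e^{x^2/4}D_{1/2}(-x)$ by polynomial growth at $-\infty$, and obtains $\Phi(z_*)$ with $z_*\simeq0.7650$.

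For the selection step, the paper uses a lighter device than uniform asymptotics or Sturm comparison. Every non-constant Taylor coefficient of ${}_2F_1(\frac{n-1}{2},-\frac12;\frac{m_1+1}{2};t^2)$ is negative, so the profile is positive, decreasing and concave in $t$. A positive concave limit cannot contain the $e^{x^2/2}$ branch at $-\infty$.

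\textbf{The gap: the axisymmetric half.} You never identify the limiting profile there. Both remedies you suggest, passing to the complementary region or tracking next-order eigenvalue corrections, lead away from the answer. The function $1-x^2$ does not solve $\psi''-x\psi'+\psi=0$; it is $-He_2$, which solves the equation with parameter $2$. With parameter $1$, the polynomial solution is the odd function $x$, which is the rescaled linear profile $t=\cos s$, i.e.\ the flat cone. The even solution is not a polynomial:
\[
G(x)={}_1F_1\bigl(-\tfrac12;\tfrac12;\tfrac{x^2}{2}\bigr)=e^{x^2/2}-x\int_0^x e^{y^2/2}\,dy,\qquad G'(x)=-\int_0^x e^{y^2/2}\,dy,\qquad G''(x)=-e^{x^2/2}.
\]
Hence $G$ is even and strictly concave with $G(0)=1$. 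It has a unique positive zero $c\simeq1.3069$, characterized by $e^{c^2/2}=c\int_0^c e^{y^2/2}\,dy$. The limiting density is then the standard Gaussian mass of $(-c,c)$, namely $\operatorname{erf}(c/\sqrt2)\simeq0.80876$. No eigenvalue correction and no complementary region are involved.

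Rigor in this case is also easier than your window argument suggests. In $t=\cos s$, the De Silva--Jerison profile is the explicit even solution $F_n(t)={}_2F_1(\frac{n-1}{2},-\frac12;\frac12;t^2)$, which is centered where its power series is. Substitute $x=\sqrt{n-1}\,t$ and use $(\frac{n-1}{2})_k(n-1)^{-k}\to2^{-k}$; this gives $F_n(x/\sqrt{n-1})\to G$ in $C^\infty_{\mathrm{loc}}$ directly from the series. The zero of $G$ is simple, so $\sqrt{n-1}\,t_n\to c$. Finally, $(1-t^2)^{\frac{n-3}{2}}\to e^{-x^2/2}$ together with dominated convergence in \eqref{eqn:density-of-cone} finishes the computation, exactly as in Proposition~\ref{proposition:axisymmetric-density}.
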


\subsection{Geometric properties of homogeneous solutions}
In addition to our new constructions and rigidity results, we introduce a framework for studying homogeneous solutions of the one-phase problem with isolated singularities through the geometry of complete hypersurfaces in $\bR^n$.

\begin{definition}\label{def:LinkOfCone}
Given a homogeneous solution $U$ of the problem~\eqref{eq:hom} whose free boundary has an isolated singularity, we define its \textit{radial graph} $\Sigma$ to be the hypersurface 
\[
\Sigma := \{x : U(x) = 1\} \subset \bR^n
\]
with induced metric $g$.
We define $\Gamma_U := \mr{Isom}(\Sigma)$ to be the isometry group of the radial graph, and $\Gamma_U^0$ to be the connected component of $\Gamma_U$ containing the identity.
\end{definition}
Geometrically, $(\Sigma, g)$ is a complete, non-compact, properly embedded, real-analytic hypersurface~\cite{kinderlehrer-nirenberg-spruck}.
By homogeneity, the original solution $U$ can be reconstructed from its radial graph.
Indeed, let us write $x = \rho \omega$ for $(\rho, \omega) \in (0,\infty) \times \bS^{n-1}$ and $\Omega := \{ U > 0\} \subset \bR^n$ as well as $\Omega^S := \Omega \cap \bS^{n-1}$ for the positive phase of $U$.
The spherical Faber-Krahn inequality implies that $\Omega^S$ is connected, cf.~\cite{jerison-kamburov}*{\S 5}.
We may therefore write
\[
\Sigma = \left\{ \frac{\omega}{U(\omega)} : \omega \in \Omega^S \right\} = \left\{ \frac{x}{U(x)} : x \in \Omega \right\}
\]
and $\Sigma$ is a connected, star-shaped radial graph over $\Omega^S$, whose rescalings $\{ \lambda \Sigma\}_{\lambda > 0} = \{ x : U(x) = \lambda \}$ foliate the conical region $\Omega = \{ U > 0 \}$.
The geometry of $\Sigma$ gives a complete characterization of the homogeneous solution $U$ and encapsulates its complexity and symmetries.
The isometry group $\Gamma_U := \textup{Isom}(\Sigma)$ and the component $\Gamma^0_U$ provide an algebraic invariant for homogeneous solutions: notably, $\Gamma_U = \textup{E}(n-1)$ is the Euclidean group if and only if $U(x) = \la a ,  x \rg_+$ a flat half-space solution, see Lemma \ref{lemma:euclidean-structure}.
The full regularity theory of the Bernoulli free boundary problem can be interpreted as a classification and rigidity problem for the radial graphs, with the stability condition transforming into a nonlocal operator on $\Sigma$.

The radial graph $\Sigma$ is foliated by the level sets of the distance functions to the origin.
In our constructions, which arise from isoparametric foliations, this foliation consists of dilates of the leaves $M_s$, possibly including the focal submanifold $M_i$.
This occurs for the Type I solutions, for which the locus of points achieving the minimum distance to the origin is a dilate of one $M_i$.
In the Type II case, $\Sigma_U$ has two ends, and each fiber $\Sigma \cap \bS^{n-1}(R)$ is the union of two regular leaves $M_s$.
The region closest to the origin $\frac{1}{\mr{dist}(\Sigma,0)}\{x \in \Sigma : |x| = \mr{dist}(\Sigma, 0)\}$ represents the distinguished leaf where the profile curve $\bar{f}_M$ is maximized.

In Section~\ref{sec:RadialGraph}, we provide a more detailed description of the geometry of the radial graphs of the solutions $U_{M_i}, \bar{U}_M$ constructed in Theorem~\ref{thm:isoparametric-foliation-one-phase-cone}.
Furthermore, combining Theorem~\ref{thm:unified-theorem} with the above discussion allows us to characterize singularity models purely by the algebraic invariant $\Gamma^0_U$.

\begin{theorem}\label{thm:unified-theorem}
Consider a homogeneous solution $U$ of the Bernoulli free boundary problem with radial graph $\Sigma_U := \{ U=1 \}$.
\begin{enumerate}[(i)]
\item Consider an isoparametric foliation $\cF$ with $g \geq 2$, if the radial projection of $\Sigma_U$ and its radial function are invariant along the leaves of $\cF$.
Then, either $\Sigma$ has one end and $U$ is one of the solutions $\{ U_{M_1}, U_{M_2} \}$, or $\Sigma$ has two ends and $U$ agrees with $\bar{U}_{M}$ after an ambient isometry.
In the first case, the focal submanifold is determined by
\[
\tfrac{1}{\textup{dist}(\Sigma,0)} \{ x \in \Sigma : |x| = \textup{dist}(\Sigma,0) \} = M_i \subset \bS^{n-1}.
\]
\item If the foliation $\cF$ consists of orbits of a compact connected group $G \subset O(n)$, then the classification $(i)$ holds for radial graphs $\Sigma_U$ that are $G$-invariant.
\end{enumerate}
In particular, if $\textup{SO}(n-1) \subseteq \textup{Isom}(\Sigma_U)$, then $U$ is either a flat half-space solution or the axisymmetric De Silva-Jerison cone.
\end{theorem}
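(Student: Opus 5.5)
The plan is to reduce everything to the ODE classification behind Theorem~\ref{thm:isoparametric-foliation-one-phase-cone}. First, the radial graph determines $U$: since $\Sigma=\{x/U(x): x\in\Omega\}$, the radial projection of $\Sigma_U$ is $\Omega^S=\{\phi>0\}$, and the radial function $r(\omega)=1/\phi(\omega)$. So if the projection and radial function are constant along the leaves of $\cF$, then $\phi=U|_{\bS^{n-1}}$ is a solution of \eqref{eq:homsph} that is constant on leaves. Write $\phi=h(s)$ with $s\in[0,\pi/g]$ the normal foliation parameter. Because the leaves have constant mean curvature $H(s)$, the spherical Laplacian becomes $h''+H(s)h'$ with $H(s)=\tfrac{g}{2}\bigl(m_2\cot\tfrac{gs}{2}-m_1\tan\tfrac{gs}{2}\bigr)$ up to convention. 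The substitution $t=\sin\tfrac{gs}{2}$ turns this into a hypergeometric equation whose solutions regular at the focal submanifold $s=0$ are multiples of $f_{M_1}(t)$; regularity at $s=\pi/g$ instead selects $f_{M_2}(\cos\tfrac{gs}{2})$.

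Next I classify by the topology of $\{\phi>0\}$. By connectedness of $\Omega^S$ (Faber--Krahn), $\{h>0\}$ is a single interval $I\subset[0,\pi/g]$.
\begin{itemize}
\item If $I$ contains the endpoint $0$ (respectively $\pi/g$), then $h$ must be smooth across the focal submanifold $M_1$ (respectively $M_2$). This forces $h=c f_{M_1}(\sin\tfrac{gs}{2})$, and the free boundary condition $|\nabla\phi|=1$ fixes $c=c_{M_1}$, giving $U_{M_1}$; the other endpoint gives $U_{M_2}$.
\item If $I=(s_1,s_2)$ is interior, then $h$ lies in the two-dimensional solution space. Vanishing at both ends together with $|h'(s_1)|=|h'(s_2)|=1$ is exactly the Type II system, whose unique solution up to the isometry exchanging the two sides is $\bar U_M$ by Theorem~\ref{thm:isoparametric-foliation-one-phase-cone}.
\end{itemize}
I would take the exhaustiveness ("exactly three") from that theorem rather than reprove it.

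The number of ends of $\Sigma$ corresponds to the number of boundary components of $\Omega^S$: these are $M_{\tau}$ (one leaf) in the first case and $M_{s_1}\sqcup M_{s_2}$ in the second, because each boundary leaf gives an end where $r\to\infty$. This yields the dichotomy in (i). The points closest to the origin are where $\phi$ is maximal. For Type I this is at the focal submanifold, which requires checking that $f_{M_i}$ is monotone decreasing on $(0,t_{M_i})$. That is the first-eigenfunction property: $h'$ cannot vanish in the interior, since otherwise a nodal argument on $h'$ together with the ODE would produce a contradiction. It gives $\tfrac{1}{\mathrm{dist}}\{|x|=\mathrm{dist}\}=M_i$.

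For (ii), if $G$ acts with cohomogeneity one with orbits the leaves of $\cF$, then $G$-invariance of $\Sigma_U$ means $G$ preserves the set and commutes with scaling. Hence $G$ preserves both the radial projection and $|x|$ on $\Sigma$, and (i) applies. For the final claim, suppose $SO(n-1)\subseteq \mathrm{Isom}(\Sigma_U)$. I expect this to be the main obstacle: one must upgrade intrinsic isometries of $(\Sigma,g)$ to ambient rotations. My first attempt would use rigidity of hypersurfaces (Beez--Killing type: when the second fundamental form has rank at least $3$, intrinsic isometries extend to ambient ones), with the Euclidean/flat case handled by Lemma~\ref{lemma:euclidean-structure} and low-rank points excluded by analyticity. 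This gives an $SO(n-1)\subset O(n)$ action fixing an axis, whose orbits form the $g=1$ foliation by parallel spheres. The $g=1$ ODE (Legendre-type) then admits only the half-space and the two-ended De Silva--Jerison profile, with the one-ended cap solutions being the flat ones.
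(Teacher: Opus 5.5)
Your treatment of (i) and (ii) is the paper's argument. The radial graph forces $U=\rho\,\phi(s)$, the Type I profiles are the hypergeometric solutions regular at a focal submanifold, the Type II profile is unique by Corollary~\ref{cor:existence-and-uniqueness}, and ends correspond to boundary leaves. Two minor points. First, your $H$ has $m_1,m_2$ interchanged relative to the convention in which $f_{M_1}$ is regular at $s=0$. Second, monotonicity of the Type I profile needs no nodal argument: with $q=\exp(-\int H)$, a multiple of $v$, one has $(qh')'=-\lambda qh<0$ on $\{h>0\}$ and $qh'\to0$ at the focal end, so $h'<0$ there.

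The gap is in the final claim, at the step you flag. ``Low-rank points excluded by analyticity'' does not work. Analyticity only propagates the Beez--Killing conclusion (Lemma~\ref{lemma:shape-operators}) from an open set where $\operatorname{rank}A_\Sigma\ge3$, and you never produce such a set. Lemma~\ref{lemma:euclidean-structure} only rules out intrinsic flatness, which gives merely $\operatorname{rank}A_\Sigma\ge2$ somewhere.

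In the paper this input comes from Proposition~\ref{prop:intrinsic-ambient-isometry}. There, at the minimum-distance point, $A_\Sigma(x_0)=-(D^2_{\bS^{n-1}}U(\omega_0)+U(\omega_0)\mathrm{Id})$, and leaf-invariance gives $D^2_{\bS^{n-1}}U(\omega_0)=0\oplus\lambda\,\mathrm{Id}_\ell$ with $\lambda=-\frac{n-1}{\ell}U(\omega_0)$, hence full rank. But that proposition assumes exactly the invariance you are trying to derive. So it cannot simply be cited here (the paper's one-line appeal to it has the same weakness), and you need an independent argument.

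One way to supply it is the following sketch.
\begin{itemize}
\item One checks that an effective isometric $SO(n-1)$-action on the non-compact $\Sigma^{n-1}$ has codimension-one principal orbits, so $g_\Sigma=d\tau^2+a(\tau)^2g_{\bS^{n-2}}$.
\item The curvature operator then has eigenvalue $-a''/a$ with multiplicity $n-2\ge2$. By Gauss, $\mathrm{Rm}=A\wedge A$, so $\operatorname{rank}A_\Sigma\ge3$ wherever $a''\ne0$. This condition is intrinsic, so it holds on both sides of the isometry.
\item If $a''\equiv0$ (by analyticity), only flat space or a round cylinder remains. The cylinder contradicts $|\mathrm{Rm}_\Sigma|=O(|x|^{-2})$.
\end{itemize}

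You also skip the passage from rigid motions $x\mapsto Qx+b$ to $b=0$ and $U\circ Q=U$. The paper gets this in three steps: $Q(\partial\Omega^S)=\partial\Omega^S$ from the behaviour at infinity; Faber--Krahn to upgrade this to $Q(\Omega^S)=\Omega^S$; and the decay $|A_\Sigma|=O(|x|^{-1})$ to exclude $\Sigma+b=\Sigma$. Only then do you have a linear $SO(n-1)\subset O(n)$ preserving $U$. It fixes an axis because a faithful $n$-dimensional orthogonal representation of $SO(n-1)$ is standard plus trivial.

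Finally, this route needs $n\ge4$. For $n=3$ the rank-$3$ condition is vacuous, and intrinsic isometries need not be ambient.
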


It is interesting to compare the classification Theorem~\ref{thm:unified-theorem} with the corresponding result in classical minimal surface theory.
In $\bR^3$, a fundamental result due to Riemann and Enneper~\cites{riemann, enneper} states that the catenoid and the Riemann minimal surface are the only surfaces foliated by circles.
Shiffman~\cite{shiffman} extended this classification to \textit{compact} minimal surfaces with boundary bounded between two parallel planes.
In higher dimensions, the situation is different: Jagy~\cite{jagy} proved that the only minimal surface in $\bR^{n+1}$ with $n \geq 3$ foliated by spheres is the catenoid, and the higher-dimensional Riemann minimal examples of Kaabachi-Pacard~\cite{kaabachi-pacard} are not foliated by $\bS^{n-1}$.
In contrast, Theorem~\ref{thm:unified-theorem} provides an analogue of the minimal surface uniqueness results for the free boundary problem that is valid in every dimension.

The study of radial graphs associated with homogeneous solutions of the Bernoulli free boundary problem is also inspired by the ambitious program proposed by Jerison~\cite{jerison}.
Jerison's approach seeks to obtain more refined information about the regularity of stable critical points of the Alt-Caffarelli functional and the classification of singularity models by relating them to the level sets of harmonic functions.
The technique of radial graphs, together with Theorem~\ref{thm:unified-theorem}, provides a complementary geometric approach to the classification of homogeneous singularity models. 
Rather than constructing a desingularization of a given cone by global minimizers as the work of De Silva-Jerison-Shahgholian~\cite{hardt-simon-DSJ}, we recover the cone from the geometry and symmetry of a distinguished positive level set. 
This approach applies without an assumption of stability or minimality, and gives a complete classification within every isoparametric symmetry class.

\section{Preliminaries}\label{section:preliminaries}

\subsection{Isoparametric hypersurfaces of the sphere}\label{subsec:isoparametric}

We consider \textit{isoparametric foliations} of the sphere $\bS^{n-1}$ which are constant mean curvature (CMC) foliations given as the level sets of $F: \bS^{n-1} \to \bR$ which are \textit{isoparametric}, meaning they satisfy
\begin{equation}\label{eqn:distance-properties}
\bigl|\nabla^{\bS^{n-1}} F \bigr|^2 = a(F), \qquad \Delta_{\mathbb S^{n-1}} F = b(F)
\end{equation}
for smooth functions $a, b$.
Any CMC foliation by parallel hypersurfaces is locally isoparametric.

Isoparametric foliations of $\bS^{n-1}$ have been characterized, and largely classified, in a series of landmark works.
The primary classification is due to Cartan and M\"unzner~\cites{Cartan1939, cheese1, cheese2}, which associates to each foliation canonical parameters $(g, m_1, m_2)$.
Here, $g \in \{1,2,3,4,6\}$ represents the number of distinct principal curvatures while $(m_1, m_2)$ are the multiplicities of the first two principal curvatures, where $m_i = m_{i+2}$ modulo $g$ for further ones.
These data satisfy the relation
\begin{equation}\label{eqn:counting-multiplicities}
    g( m_1 + m_2) = 2 (n-2).
\end{equation}
The foliation degenerates at two lower dimensional distinguished manifolds $M_1$ and $M_2$ called the \textit{focal submanifolds}, of codimensions $m_1 + 1$ and $m_2+ 1$ respectively.
Every regular leaf $M_s$ is connected and a parallel surface of any other, so there is a natural parameter $s \in (0,\frac{\pi}{g})$ which measures the distance to $M_1$ and sweeps out every regular leaf $M_s$.
One distinguished leaf $M_s$ will be minimal, so we denote it $M$.
Expressing $\bR^n$ as the homogenization of $\bS^{n-1}$, we can extend any such foliation of the sphere to the $\bR^{n}$ by dilations, so every point is associated with some leaf $M_s$ by rescaling.
We extend the notion of isoparametric functions on $\bS^{n-1}$ to $\bR^n$ by dilations, which are called \textit{$\cF$-invariant}, see \cite{wang-on-a-class}*{Definition 2.2}.

We briefly record known classifications and the topology of these isoparametric foliations, which will help classify and identify the solutions in Theorem~\ref{thm:isoparametric-foliation-one-phase-cone}.
\begin{enumerate}[$\bullet$]
\item $g=1$: The foliation comes from the inductive structure of spherical sine-suspensions $\bS^n = \Sigma \bS^{n-1}$.
The focal surfaces are the north and south poles.
\item $g=2$: Generalizing the previous example, this foliation comes from the bi-orthogonal symmetry $\textup{O}(n-k) \times \textup{O}(k) \subset \textup{O}(n)$ whose natural invariant hypertori $\bS^{n-k-1}(\sin s)\times \bS^{k-1}(\cos s)$ form the Clifford family, and the focal manifolds are the high codimension unit spheres in the invariant orthogonal subspaces $\bR^{n-k}$ and $\bR^{k}$. 
\item $g=3$: The Cartan foliations arise from the four real division algebras $\bF \in \{ \RR, \bC, \bH, \bO\}$ with $m_1 = m_2 = m := \dim_{\bR} \bF \in \{1,2,4, 8\}$.
There is an additional symmetry from the antipodal involution that isometrically identifies the two focal submanifolds, which are the two Veronese embeddings of the projective plane $\bF \bP^2 \hookrightarrow \bS^{3m+1}$.
The regular leaves $M_s$ are the flag manifolds
\[
\textup{SO}(3) / (\bZ_2 \oplus \bZ_2) , \qquad \textup{SU}(3) / \bT^2, \qquad \textup{Sp}(3) / \textup{Sp}(1)^3, \qquad F_4 / \textup{Spin}(8).
\]
\item $g=4$: The isoparametric hypersurfaces are either of OT-FKM type~\cites{ferus-karcher-munzner, OTg4} or belong to one of the two exceptional homogeneous families with $(m_1,m_2)\in\{(2,2),(4,5)\}$. 
In the OT-FKM case, $(m_1, m_2) = (m , k \, \delta(m) - m - 1)$, where $\delta(m)$ is the dimension of a irreducible $C_{m-1}$-module over the Clifford algebra with $(m-1)$ generators, computed by Bott periodicity as
\begin{equation}\label{eqn:values-using-bott-delta}
\delta ( 8k +j) = 2^{4k} \delta(j), \qquad \text{where} \quad \delta(1) = 1, \; \delta(2) = 2, \; \delta(3) = \delta(4) = 4, \; \delta(j) = 8, \; 5 \leq j \leq 8.
\end{equation}
Moreover, the focal submanifold $M_1$ is the intersection of quadric hypersurfaces while $M_2$ is a sphere bundle over $\bS^m$.
The regular leaves satisfy $M \cong M_1 \times \bS^{m_1}$.

In the two exceptional homogeneous cases,
\begin{align*}
(m_1,m_2)&=(2,2):\qquad
M_1\cong \bC\bP^3, && M_2\cong \widetilde{\bG}(2,5)\cong Q^3, \\
(m_1,m_2)&=(4,5):\qquad
M_1\cong \mathrm{U}(5)/(\mathrm{Sp}(2)\times \mathrm{U}(1)),&&
M_2\cong \mathrm{U}(5)/(\mathrm{SU}(2)\times \mathrm{U}(3)).
\end{align*}
The regular leaves of these foliations are given by
\[
M_{(4,2,2)} \cong \textup{SO}(5) / \bT^2, \qquad M_{(4,4,5)} \cong \textup{U}(5)/ ( \textup{SU}(2) \times \textup{SU}(2) \times \textup{U}(1))
\]
where $\bT^2$ is the maximal torus in $\textup{SO}(5)$.

\item $g=6$: The multiplicities are equal and $m_1=m_2\in\{1,2\}$.
When $m=1$, the unique foliation of $\bS^7$ is homogeneous and arises as the inverse image of the Cartan isoparametric foliation with $(g, m_1,m_2) = (3,1,1)$ under the Hopf fibration $\bS^7 \to \bS^4$.
The two focal manifolds are diffeomorphic to $\bR\bP^2\times \bS^3$, while the regular leaves are the homogeneous Hopf lifts of $\textup{SO}(3)/ \bZ_2^{\oplus 2}$.
For $m=2$, the only known example is homogeneous in $\bS^{13}$ with regular leaves $G_2/(\mathrm{U}(1)\times \mathrm{U}(1))$ and focal manifolds $G_2/\mathrm{U}(2)^+$ and $G_2/\mathrm{U}(2)^-$, where $G_2/\mathrm{U}(2)^-\cong Q^5\cong \widetilde{\bG}(2,7)$ is diffeomorphic to the complex quadric and the oriented Grassmannian of $2$-planes in $\bR^7$.
The full classification when $(g,m) = (6,2)$ was studied in the works~\cites{miyaoka , siffert-AGAG}, with~\cite{tang-6-2} recently presenting a short alternative argument to complete the isoparametric classification. 
\end{enumerate} 

\subsection{ \texorpdfstring{$\cF$}{F}-invariant solutions}\label{sec:minimal-surface-equation}

We now study the problems~\eqref{eq:hom} and~\eqref{eq:extremal} for functions that are invariant with respect to an isoparametric foliation $\cF$.
Let us parametrize $\cF$-invariant homogeneous solution as $U(\rho, \omega) = \rho \phi( s (\omega))$, where $\rho = |x|$, $\omega = \frac{x}{|x|}$ and $s \in [0, \frac{\pi}{g}]$ is the parameter along the isoparametric foliation such that $\omega \in M_s$.
Given an isoparametric foliation $\{ M_s \}$ of $\bS^{n-1}$ with principal curvature parameters $(g, m_1, m_2)$, we can use the normal parametrization $s$ for each leaf to compute the induced volume density
\begin{equation}\label{eqn:isoparametric-volume-density}
v(s) = C \left( \sin \tfrac{gs}{2} \right)^{m_1} \left( \cos \tfrac{gs}{2} \right)^{m_2}.
\end{equation}
The mean curvature of the leaf $M_{s}$ can therefore be computed as
\begin{equation}\label{eqn:h-sigma}
H(s) = - \frac{d}{d\sigma}\bigg\vert_{\sigma = s} \log v(\sigma) = \frac{g}{2} \left(m_2 \tan \frac{g s}{2} - m_1 \cot \frac{g s}{2} \right).
\end{equation}
The spherical Laplacian for any $\cF$-invariant function $\phi(s)$ becomes an ODE
\begin{equation}\label{eqn:laplacian-of-f}
\Delta_{\mathbb S^{n-1}} \phi = \phi'' + ( \partial_s \log \sqrt{\det g_s} ) \phi' =  \phi'' - H \phi' = \phi'' + \tfrac{v'}{v} \phi'
\end{equation}
where $g_s$ is the induced metric on the leaf $M_s$.

By~\eqref{eqn:h-sigma} and~\eqref{eqn:laplacian-of-f}, $\cF$-invariant solutions $\phi$ of \eqref{eq:homsph} or, more generally,  \eqref{eq:extremal} satisfy the ODE 
\begin{equation}\label{eq:ode-s}
    \phi''-H \phi' + \lambda \phi = 0, \qquad \text{ in the connected set }\{\phi>0\} \subset [0, \tfrac{\pi}{g} ].\tag{$\star$}
\end{equation} 
Moreover, the boundary $\partial\{\phi>0\}$ consists of leaves of the foliation, and thus $|\nabla \phi| = |\phi'|$.
When $\{\phi>0\}$ is an interval $(s_1, s_2)$ compactly contained in $[0,\frac{\pi}{g}]$ (i.e. Type II solutions), the above equation is regular, and the boundary condition in \eqref{eq:homsph} or \eqref{eq:extremal} becomes 
\begin{equation}\label{eq:BC-2}
    \left\{\begin{array}{l}
     \phi(s_1) = \phi(s_2) = 0,  \\
     \phi'(s_1) = -\phi'(s_2) = 1.
\end{array} \right. 
\end{equation} 
When the interval $\{\phi>0\}$ contains either one of the endpoints (i.e. Type I solutions), since the above equation is singular in the first-order term at $0$ and $\frac{\pi}{g}$, the boundary condition is
\begin{equation}\label{eq:BC-1}
    \left\{\begin{array}{l}
     \phi(s_2)=0 \\
     \phi'(0) = 0, \phi'(s_2) = -1
\end{array} \right. \text{ if } s_1 = 0, \qquad
\left\{\begin{array}{l}
     \phi(s_1)=0 \\
     \phi'(s_1) = 1, \phi'( \tfrac{\pi}{g} ) =0
\end{array} \right. \text{ if } s_2 = \tfrac{\pi}{g}. 
\end{equation} 

The boundary conditions~\eqref{eq:BC-1} or~\eqref{eq:BC-2} correspond to the two types of solutions~\eqref{eqn:U-m-foliation} and~\eqref{eqn:U-bar-m-second-solution} from Theorem~\ref{thm:isoparametric-foliation-one-phase-cone}.
We call functions satisfying \eqref{eq:BC-1} Type I solutions, whose spherical positivity set $\Omega^S$ corresponds to a tubular neighborhood of a focal submanifold $M_1$, with free boundary a regular leaf of the isoparametric foliation.
We call functions satisfying~\eqref{eq:BC-2} Type II solutions, with $\Omega^S$ given by the spherical band between two regular leaves of the foliation and spherical free boundary expressed as the disjoint union of two regular leaves.

Geometrically, the involution $s \leftrightsquigarrow \frac{\pi}{g}-s$ corresponds to changing the isoparametric foliation $\{M_s\}_{s\in [0,\frac{\pi}{g}]}$ to the same foliation but in reversed order, namely $\{\tilde{M}_{\tau}\}_{\tau\in [0,\frac{\pi}{g}]}$ where $\tilde{M}_{\tau} = M_{\frac{\pi}{g}-\tau}$. In particular, we exchange the two focal submanifolds by $m_1 \leftrightsquigarrow m_2$.
It therefore suffices to consider Type I solutions on $[0,s_2] \subset [0,\frac{\pi}{g})$ satisfying the first pair of boundary conditions in \eqref{eq:BC-1}. 
Using this symmetry, when $m_1 = m_2$ and for all but countably many $\lambda$, we can also express the Type II solutions explicitly in terms of appropriate averages of the hypergeometric functions centered at the two focal submanifolds of the isoparametric foliation.

For isoparametric foliations with $g \in \{ 1,3,6 \}$ principal curvatures, we always have $m_1=m_2$, see \cites{Cartan1939, Abresch}; when $g=2$, we have $m_1=m_2$ when the isoparametric hypersurfaces are the product of two spheres of the same dimension.
Finally, $g=4$ produces the homogeneous isoparametric foliations with $m_1 = m_2 \in \{ 1,2 \}$.
In these cases, we have $H\left( \frac{\pi}{g} - s\right) =-H(s)$ by \eqref{eqn:h-sigma}, and thus the solution space of equation \eqref{eq:ode-s} is preserved by the involution $s \leftrightsquigarrow \frac{\pi}{g}-s$, meaning that $\phi(\frac{\pi}{g}-s)$ is a solution to the same equation whenever $\phi(s)$ is.
In particular, after we establish the uniqueness of Type II solutions, it follows by symmetry that $\{\phi>0\}$ contains the \emph{minimal} leaf $M_{\frac{\pi}{2g}}$.

\subsection{Geometry of the radial graphs}\label{sec:RadialGraph}

We next study the geometry of the \textit{radial graph} $\Sigma$ associated to the homogeneous solution $U$ in Definition~\ref{def:LinkOfCone}. By homogeneity, $\Sigma$ determines $U$, so analytic properties of $U$ are reflected in the geometry of $\Sigma$.
In particular, the number of ends of $\Sigma$ equals the number of connected components of $\partial \{ U>0\}\cap \bS^{n-1}$. 
This distinguishes the two families in Theorem~\ref{thm:isoparametric-foliation-one-phase-cone}: Type I examples $U_{M_i}$ have one end, while Type II examples $\bar U_M$ have two.
The radial graphs $\Sigma_{M_1},\Sigma_{M_2}$ of these homogeneous solutions can be topologically identified with the open normal neighborhoods of a focal submanifold $M_i \subset \bS^{n-1}$.
On the other hand, the radial graph $\bar{\Sigma}_{M}$ can be topologically identified with $M \times \bR$, the product of a regular leaf with a line.

Theorem~\ref{thm:unified-theorem} establishes a rigidity principle for homogeneous solutions of the Bernoulli free boundary problem: the symmetry of the radial graph determines the solution canonically.
The simplest instance is the axisymmetric case, where the only non-flat solution is the De Silva–Jerison cone. 
Its radial graph $\bar{\Sigma}_{\bS^{n-2}}$ is a two-ended hypersurface diffeomorphic to $\bR \times \bS^{n-2}$, invariant under rotations fixing an axis, and its intersections with spheres centered at the origin are unions of round spheres. 
In contrast, the flat solution has a one-ended radial graph isometric to Euclidean space. Thus, among axisymmetric homogeneous solutions, the number of ends already distinguishes the flat and De Silva–Jerison solutions. More generally, Theorem~\ref{thm:unified-theorem} shows that the uniqueness of the De Silva–Jerison cone is not an isolated consequence of rotational symmetry, but the first instance of a classification that applies to every prescribed isoparametric foliation.

The general geometric characterization of radial graphs is expressed through the distance function to the origin on $\Sigma_U$.
For $R>0$, let us write $\Sigma_U(R) := \Sigma_U \cap \bS^{n-1}(R)$.
If $U = \rho \phi(s)$ is $\cF$-invariant in the sense of~\cite{wang-on-a-class}, then each $\Sigma_U(R)$ is a union of dilates of leaves of the isoparametric foliation, possibly including a focal submanifold.
Conversely, if every $\Sigma_U(R)$ is a union of entire dilates of leaves of a fixed isoparametric foliation, then the radial graph representation and one-homogeneity imply that $U$ is constant along those leaves.
Consequently, $U = \rho \phi(s)$, and Theorem~\ref{thm:unified-theorem} implies that $U$ is one of $\{ U_{M_1}, U_{M_2}, \bar{U}_M\}$.
These three possibilities can be distinguished precisely from the radial geometry: the Type II solution $\bar{U}_M$ has two ends, whereas the Type I solutions have one end.
More precisely, the normalized locus $\frac{1}{\textup{dist}(\Sigma_U,0)} \Sigma_U( \textup{dist}(\Sigma_U,0))$ is a regular leaf for $\bar{U}_M$, and is the focal submanifold $M_i$ for $U_{M_i}$.
Thus, the radial foliation together with its distinguished minimum-radius locus identifies the homogeneous solution uniquely, including cases in which the two Type I solutions possess the same ambient symmetry group or have diffeomorphic radial graphs.

\begin{lemma}\label{lemma:star-shaped-1}
Let $\nu$ be the unit normal vector along the hypersurface $\Sigma \subset \bR^n$, pointing in the direction where $U$ is increasing.
    Then, $\la x, \nu \rg \geq 1$ everywhere along $\Sigma$, with strict inequality unless $U = \la a, x \rg_+$ is a half-plane solution.
    In particular, the hypersurface $\Sigma$ is a radial graph.
\end{lemma}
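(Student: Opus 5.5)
The plan is to combine Euler's identity for one-homogeneous functions with a maximum principle for $|\nabla U|^2$. Since $U$ is homogeneous of degree one, $\la x, \nabla U(x)\rg = U(x)$ in $\Omega = \{U>0\}$. On $\Sigma = \{U = 1\}$ we have $\nu = \nabla U / |\nabla U|$, where $\nabla U \neq 0$ on $\Sigma$ precisely because $\la x, \nabla U \rg = 1$ there. Hence
\[
\la x, \nu \rg = \frac{\la x, \nabla U\rg}{|\nabla U|} = \frac{1}{|\nabla U(x)|} \qquad \text{on } \Sigma .
\]
The lemma is therefore equivalent to the gradient bound $|\nabla U| \leq 1$ in $\Omega$, with strict inequality unless $U$ is flat.

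To prove the gradient bound I will use Bochner's identity for the harmonic function $U$ in $\Omega$:
\[
\Delta |\nabla U|^2 = 2 |D^2 U|^2 \geq 0 ,
\]
so $w := |\nabla U|^2$ is subharmonic in $\Omega$. The function $w$ is homogeneous of degree zero, so it is determined by its restriction to $\overline{\Omega^S}$, where $\Omega^S = \Omega \cap \bS^{n-1}$, and that set is compact. Because the free boundary has an isolated singularity at the origin, $\partial\Omega \setminus \{0\}$ is smooth, in fact analytic by Kinderlehrer--Nirenberg--Spruck. Hence $\nabla U$ extends continuously to $\overline{\Omega}\setminus\{0\}$, with $w = 1$ on $\partial \Omega \setminus \{0\}$. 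Consequently $w$ attains its supremum over $\Omega \setminus \{0\}$ at some point of $\overline{\Omega^S}$.

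There are two cases. If the maximum is attained only on $\partial\Omega^S$, then $w \leq 1$. If instead it is attained at an interior point $\omega \in \Omega^S$, then, since $w$ is constant along rays, $w$ attains an interior maximum in the open set $\Omega$. The set $\Omega$ is connected, being the cone over the connected set $\Omega^S$ (connected by the spherical Faber--Krahn remark above). The strong maximum principle then forces $w$ to be constant on $\Omega$, so $D^2 U \equiv 0$ and $U$ is affine on $\Omega$. Being one-homogeneous with $|\nabla U| = 1$ on the free boundary, $U$ is then $\la a, x\rg_+$ with $|a| = 1$.

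In the non-flat case this argument gives $w \leq 1$ with $w$ non-constant. Applying the strong maximum principle once more yields $w < 1$ everywhere in $\Omega$, and hence $\la x, \nu\rg > 1$ on $\Sigma$. In the flat case, $\la x, \nu \rg = 1$ identically.

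The radial graph property follows from homogeneity. Along each ray $\{\rho\omega\}$ with $\omega \in \Omega^S$, the function $\rho \mapsto U(\rho\omega) = \rho U(\omega)$ is strictly increasing and equals $1$ exactly once, at $\rho = 1/U(\omega)$. Rays with $\omega \notin \Omega^S$ never meet $\Sigma$. The positivity $\la x, \nu\rg \geq 1 > 0$ shows in addition that these rays cross $\Sigma$ transversally.

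The main obstacle is the boundary behaviour needed to run the maximum principle. One must justify that $w$ extends continuously up to $\partial\Omega\setminus\{0\}$ with value $1$, which relies on the isolated-singularity hypothesis and the regularity of the free boundary away from the vertex. One must also avoid the vertex itself, which is handled by the degree-zero homogeneity: a supremum over $\Omega$ is a supremum over the compact set $\overline{\Omega^S}$.
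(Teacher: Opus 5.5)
Your proof is correct and follows the same route as the paper. Euler's identity gives $\langle x,\nu\rangle = 1/|\nabla U|$ on $\Sigma$, and the Bochner identity with the strong maximum principle, applied to the degree-zero homogeneous subharmonic function $|\nabla U|^2$, gives $|\nabla U|<1$ in $\{U>0\}$ unless $U$ is flat; your version also makes explicit two points the paper's one-line argument uses implicitly, namely the continuity of $\nabla U$ up to $\partial\{U>0\}\setminus\{0\}$ and the compactness of $\overline{\Omega^S}$.
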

\begin{proof}
    Since $\Sigma$ is a level set of $U$, we have $\nu = \frac{\nabla U}{|\nabla U|}$.
    Using Euler's equality $\la x, \nabla U \rg = U$ for homogeneous functions, we then find $\la x, \nu \rg = \frac{\la x, \nabla U \rg}{|\nabla U|} = \frac{1}{|\nabla U|}$ due to $U=1$ on $\Sigma$.
    Moreover, $U$ is a harmonic function, hence $\Delta ( |\nabla U|^2) = 2 \, |D^2 U|^2 \geq 0$ means that $|\nabla U|$ is a homogeneous degree-$0$ subharmonic function whose maximum occurs on the boundary.
    The strong maximum principle now forces $|\nabla U| < 1$ away from $\partial \{ U > 0 \}$ unless $U = \la a, x \rg_+$ is a half-plane solution; therefore, $\la x, \nu \rg > 1$ as claimed. 
\end{proof}

While Lemma~\ref{lemma:star-shaped-1} is not used in the subsequent study of radial graphs, it highlights an important difference from the foliation of~\cite{hardt-simon-DSJ}: the inhomogeneous minimizers in the foliation of De Silva-Jerison-Shahgholian are also star-shaped, but their graphs satisfy $\la x, \nu \rg \to 0$ as $|x| \to \infty$.
On the other hand, the radial graphs $\Sigma_U$ are asymptotic to a vertically translated cone.

\begin{lemma}\label{lemma:euclidean-structure}
The following properties are equivalent.
\begin{enumerate}[(i)]
    \item The isometry group $\Gamma_U$ of the radial graph $\Sigma_U$ is the Euclidean group $\textup{E}(n-1)$.
    \item The radial graph $\Sigma_U$ is intrinsically flat.
    \item $U(x)$ is a half-plane solution $\la a,x \rg_+$ for some unit vector $a\in \mathbb{R}^n$.
\end{enumerate}
\end{lemma}
\begin{proof}
    Only one direction of the equivalence needs proof. Under the assumption
    \[ 
    \dim\Gamma_U = \dim E(n-1) = \tfrac{1}{2} n(n-1)
    \]
    Fix a point $x\in \Sigma_U$ and consider the stabilizer $\Gamma_x:= \{G\in \Gamma_U: G(x)=x \}$. Since $\Sigma_U$ is a connected $(n-1)$-manifold, $\Gamma_x$ is completely determined by its differential at $x$, which embeds into $O(n-1)$. 
    Thus, $\dim \Gamma_p \leq \dim O(n-1) = \frac{(n-1)(n-2)}{2}$, and the orbit $\Gamma_U \cdot x = \{G(x): G\in \Gamma_U \}$ has dimension
    \[ \dim(\Gamma_U \cdot x) = \dim \Gamma_U - \dim \Gamma_x \geq \tfrac{1}{2} n(n-1) - \tfrac{1}{2} (n-1)(n-2) = n-1.\]
    Connectedness of $\Sigma_U$ then implies that $\Gamma_U$ acts transitively.

    On the other hand, as $|x| \to \infty$ along $\Sigma_U$, its rescaling $\omega := \frac{x}{|x|}\in \Omega^S$ tends to $\partial\Omega^S$. 
    By the homogeneity of $U$, we have $|\nabla U(x)| = |\nabla U(\omega) | \to 1$; in particular, $|\nabla U(x)| \geq \frac{1}{2}$ for large $|x|$. 
    Let $\nu = \frac{\nabla U}{|\nabla U|}$ be the unit normal vector of the hypersurface $\Sigma_U$. 
    The shape operator of $\Sigma_U$ is given by
    \[ A_{\Sigma_U}(x) = -D\nu(x) = -\frac{D^2 U(x)}{|\nabla U(x)|},  \]
    and thus $|A_{\Sigma_U}(x)| = O(|x|^{-1})$ by the homogeneity of $D^2 U(x)$. Hence by the Gauss equation, the Riemannian curvature tensor satisfies $|\operatorname{Rm}_{\Sigma_U}(x)| = |A_{\Sigma_U}(x)|^2 = O(|x|^{-2})$. Since the isometry group $\Gamma_U$ acts transitively, it follows that $\operatorname{Rm}_{\Sigma_U} \equiv 0$, i.e. $\Sigma_U$ is intrinsically flat.

    Again by the Gauss equation, intrinsic flatness implies $\operatorname{rank} A_{\Sigma_U} \leq 1$.
    Hence $\operatorname{rank} D^2U(x) = \operatorname{rank}A_{\Sigma_U}(x) \leq 1$. 
    Because $U$ is harmonic, the symmetric matrix $D^2 U(x)$ is trace-free of rank $1$, so $D^2 U(x) \equiv 0$ on $\Sigma_U$, and hence on all of $\Omega$ by homogeneity. Finally $U(x) = \la a,  x\rg$ in $\Omega = \{x\in \mathbb{R}^n: \la a, x\rg >0\}$, and $|a| = 1$ by the Bernoulli boundary condition.  
\end{proof}

We recall a hypersurface rigidity lemma from~\cite{spivak}*{Chapter 12}, proved by Beez and Killing.
\begin{lemma}\label{lemma:shape-operators}
Consider connected hypersurfaces $\Sigma, \tilde{\Sigma} \subset \bR^n$, such that the second fundamental forms of $\Sigma, \tilde{\Sigma}$ have rank $\geq 3$ everywhere.
Then, every intrinsic isometry $F: (\Sigma, g) \to (\tilde{\Sigma}, \tilde{g})$ is the restriction of an ambient Euclidean isometry, namely there exist $Q \in O(n)$ and $b \in \bR^n$ such that $F(x) = Qx+b$ for every $x \in \Sigma$.
\end{lemma}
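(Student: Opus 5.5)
This is the classical Beez–Killing rigidity theorem, so I will follow the standard route through the Gauss and Codazzi equations and the fundamental theorem of hypersurfaces.

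\textbf{Step 1: the shape operator is intrinsic.} Let $A$ and $\tilde A$ be the second fundamental forms of $\Sigma$ and $\tilde\Sigma$, and let $F$ be an intrinsic isometry. Since the ambient space is flat, the Gauss equation reads
\[ \operatorname{Rm}(X,Y,Z,W) = A(X,W)A(Y,Z) - A(X,Z)A(Y,W). \]
Because $F$ is an isometry, the pulled-back form $B := F^*\tilde A$ satisfies the same Gauss equation with the same curvature tensor. So at each point $p$ we have two symmetric bilinear forms $A_p, B_p$ on $T_p\Sigma$ with $A\wedge A = B\wedge B$ as algebraic curvature tensors, and $\operatorname{rank} A_p \geq 3$.

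\textbf{Step 2: the algebraic lemma.} The key claim is that $B_p = \pm A_p$ at every point; this is where the rank hypothesis $\geq 3$ enters. I would diagonalize $A_p$ with eigenvalues $\kappa_i$ in an orthonormal basis $e_i$. The identity $R(e_i,e_j,e_j,e_i) = \kappa_i\kappa_j$ for $i\ne j$, together with the vanishing of the mixed components, constrains $B$. Taking three indices with $\kappa_i,\kappa_j,\kappa_k\neq 0$, one shows $B(e_i,e_j)=0$ for $i\neq j$. One then gets $b_ib_j=\kappa_i\kappa_j$, and the triple $\{i,j,k\}$ gives $b_i^2=\kappa_i^2$ with consistent signs: $b_i = \varepsilon\kappa_i$ for a single $\varepsilon\in\{\pm1\}$. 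The indices with $\kappa_l=0$ are handled by pairing them with a nonzero index, which forces $b_l=0$. Concretely, I would use the standard fact that a rank $\geq 3$ form is determined up to sign by $A\wedge A$, proved through $\ker$ and image considerations for the map $X\mapsto A(X,\cdot)\wedge A$. I expect this algebraic step to be the main obstacle. The rank-$2$ counterexamples (bendings of surfaces) show the hypothesis is sharp, so the argument must genuinely use three independent principal directions.

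\textbf{Step 3: globalize the sign and conclude.} On the open set where $A\neq 0$, which is everywhere since the rank is at least $3$, the function $\varepsilon(p)$ is continuous, hence constant on the connected manifold $\Sigma$. Replacing the unit normal of $\tilde\Sigma$ by its negative if necessary, we get $F^*\tilde A = A$. Now $F$ preserves both the first and second fundamental forms. The fundamental theorem of hypersurfaces (Bonnet uniqueness), proved by solving the Gauss–Weingarten frame ODE along paths on the connected $\Sigma$, then gives a rigid motion $x\mapsto Qx+b$ of $\bR^n$ that agrees with $F$ on $\Sigma$. To set this up, I fix $p_0$ and choose $Q$ sending the frame $(dF\text{-image of tangent frame},\nu)$ at $p_0$ appropriately. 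Both immersions $F$ and $Q\cdot\mathrm{id}+b$ then solve the same first-order system with the same initial data, so they coincide by ODE uniqueness.
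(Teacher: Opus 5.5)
Your proposal is correct and follows essentially the same route as the paper. The Gauss equation gives $A\wedge A = F^*\tilde A\wedge F^*\tilde A$, the rank-$\geq 3$ algebraic lemma upgrades this to $F^*\tilde A=\pm A$, and Bonnet uniqueness then yields the rigid motion. The only differences are cosmetic: you sketch the algebraic lemma, which the paper simply asserts, and you fix the sign globally by continuity before integrating the Gauss--Weingarten system from a base point, whereas the paper gets local rigid motions and glues them using connectedness.
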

\begin{proof}
We regard $\Sigma_1$ as the abstract Riemannian manifold $M$ equipped with two isometric immersions $f: M \looparrowright \bR^n$ and $\tilde{f}: M \looparrowright \bR^n$ given by $f(x)=x$ and $\tilde{f}(x) = F(x)$.
Let $A$ and $\tilde{A}$ denote their shape operators.
Since the two immersions induce the same metric, their intrinsic curvature tensors agree.
Therefore, the Gauss equation gives
\[
\la AX,Z \rg \la AY,W \rg - \la AX, W \rg \la AY, Z \rg = \la \tilde{A}X, Z \rg \la \tilde{A}Y , W \rg - \la \tilde{A} X , W \rg \la \tilde{A}Y, Z \rg
\]
as both equal $R(X,Y,Z,W)$.
Thus, the Gauss tensors $A \wedge A = \tilde{A} \wedge \tilde{A}$ agree.
Since $A, \tilde{A}$ have rank $\geq 3$, this implies $\tilde{A} = A$ after a consistent choice of normal vectors.
Thus, $f, \tilde{f}$ have the same first and second fundamental forms, so they differ locally by an ambient Euclidean isometry, namely every $p \in M$ has a neighborhood $U_p \ni p$ and a rigid motion $T_p(x) = Q_p x + b_p$ such that $\tilde{f}|_{U_p} = (T_p \circ f)|_{U_p}$.
Because $M$ is connected, these local rigid motions assemble to a single ambient rigid motion $T(x) = Qx+b$ such that $F(x) = Qx+b$ on $\Sigma$.
This proves our assertion.
\end{proof}

\begin{proposition}\label{prop:intrinsic-ambient-isometry}
    Let $U, V$ be two non-flat homogeneous solutions in $\mathbb{R}^n$ with $n\geq 4$ that are constant along the leaves of isoparametric foliations $\cF_U$ and $\cF_V$, respectively.
    Then, every intrinsic isometry $F: (\Sigma_U, g_{\Sigma_U}) \to (\Sigma_V, g_{\Sigma_V})$ is the restriction of an ambient isometry $Q\in O(n)$ that preserves the origin, and $V(Qx) = U(x)$ for every $x\in \mathbb{R}^n$.

    In particular, the intrinsic isometry group $\Gamma_U$ agrees with the restriction to $\Sigma_U$ of the ambient symmetry group $\{Q\in O(n): U(Qx) = U(x) \text{ for every } x\in \mathbb{R}^n\}$.
\end{proposition}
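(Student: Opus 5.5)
The natural route is Lemma~\ref{lemma:shape-operators}: check that the second fundamental forms of $\Sigma_U$ and $\Sigma_V$ have rank $\geq 3$ everywhere, conclude that $F(x)=Qx+b$, and then show $b=0$ and $V\circ Q = U$.

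\textbf{Step 1: rank of the second fundamental form.} As in the proof of Lemma~\ref{lemma:euclidean-structure}, $A_{\Sigma_U} = -D^2U/|\nabla U|$ restricted to $T\Sigma_U = \nabla U^\perp$. Write $U=\rho\phi(s)$. By one-homogeneity, $D^2U\,x=0$, so the radial direction lies in the kernel of $D^2U$. On the tangent space of the sphere, $D^2U$ restricted to the leaf directions at a point of a regular leaf $M_s$ equals $\phi(s)\,\mathrm{Id} - \phi'(s)\,A_{M_s}$, using the spherical Hessian $\nabla^2\phi$ plus the correction $\phi\, g$ from homogeneity. Here $A_{M_s}$ is the shape operator of $M_s\subset\bS^{n-1}$, which has $g$ distinct principal curvatures $\cot(\theta+\tfrac{k\pi}{g})$ with multiplicities $m_1,m_2$. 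The eigenvalues $\phi - \phi'\kappa_k$ can vanish for at most one value of $\kappa_k$, so the kernel on leaf directions has dimension at most $\max(m_1,m_2)$. Since $\nabla U$ is a combination of $x$ and $\partial_s$, the tangent space $T_x\Sigma_U$ contains all leaf directions, which have dimension $n-2$. Hence
\[
\operatorname{rank} A_{\Sigma_U} \geq n-2-\max(m_1,m_2).
\]
Points on a focal submanifold (Type I) are handled by the analogous computation, or by continuity together with analyticity.

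\textbf{Main obstacle.} It is not automatic that $n-2-\max(m_1,m_2)\geq 3$. For $g=1$ (the De Silva--Jerison cone), the leaf directions form an umbilic sphere and $A$ is a multiple of the identity there. In that case I would argue directly that $\phi - \phi'\cot s\neq 0$ on $\{\phi>0\}$, using the ODE and a Sturm comparison with the solution $\cos s$, for which $\phi-\phi'\cot s\equiv 0$. This gives rank $\geq n-2\geq 3$ once $n\geq 5$; $n=4$ needs the additional $\partial_s$ direction, and I expect the same comparison to show that $\phi''+\phi\ne0$ is generic. For $g\geq2$, the leaf directions split into $g$ eigenspaces, and at most one eigenspace can be killed. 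Together with the $\partial_s\partial_s$ entry, this should give rank $\geq 3$ except in small cases such as $g=2$ with $m_1=1$. Those cases I would check by the same sign argument: show that $\phi-\phi'\kappa_k$ has a fixed sign on the positivity interval using the ODE, exactly as in the $g=1$ case. This sign analysis is the step I expect to be hardest.

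\textbf{Step 2: from a rigid motion to $Q\in O(n)$ with $V\circ Q=U$.} Lemma~\ref{lemma:shape-operators} gives $F(x)=Qx+b$ with $Q\Sigma_U+b=\Sigma_V$. Both surfaces are asymptotic at infinity to the cones $\partial\{U>0\}$ and $\partial\{V>0\}$, because the rescalings $\lambda\Sigma\to\partial\{U>0\}$ as $\lambda\to0$. Rescaling $Q\Sigma_U+b$ by $\lambda\to 0$ shows that $Q\,\partial\{U>0\}=\partial\{V>0\}$. The two surfaces differ by a translation, so the distance to the origin compares as $|x|$ versus $|Qx+b|$. I would instead use the star-shaped identity from Lemma~\ref{lemma:star-shaped-1}: on $\Sigma$ one has $\langle x,\nu\rangle = 1/|\nabla U|$, and this tends to $1$ at infinity. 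Since $\nu$ is preserved by $Q$, the identity $\langle Qx+b, Q\nu\rangle = \langle x,\nu\rangle+\langle b,Q\nu\rangle$ forces $\langle b,Q\nu\rangle\to 0$ along every end. The normals $\nu$ at infinity range over the normals of the cone $\partial\{U>0\}$. For a non-flat cone over $M_s$, these normals span $\bR^n$ (a leaf is full in the sphere), so $b=0$. Then $Q\Sigma_U=\Sigma_V$ and one-homogeneity gives $\{V=\lambda\}=Q\{U=\lambda\}$ for all $\lambda>0$, that is, $V(Qx)=U(x)$. The statement about $\Gamma_U$ follows by taking $V=U$.
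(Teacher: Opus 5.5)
\textbf{The gap is in Step 1.} Your overall plan (Beez--Killing rigidity, then $b=0$, then $V\circ Q=U$) is the paper's, but you try to verify $\operatorname{rank}A_{\Sigma_U}\ge 3$ at \emph{every} point of $\Sigma_U$, and you do not finish this. The cases $g=1$, $g=2$ with $m_1=1$, and $n=4$ all rest on a sign analysis you have not carried out. For $n=4$ you only expect $\phi''+\phi\neq 0$ to be ``generic'', which is not enough for an everywhere hypothesis.

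The missing idea is that no global rank bound is needed:
\begin{itemize}
\item Since $\Sigma_U=\{U=1\}$ is a regular level set of a harmonic function, it is real-analytic, and so is its induced metric. Hence the isometry $F$ is real-analytic.
\item If $\operatorname{rank}A_{\Sigma_U}\ge 3$ at one point $x_0$, then it holds on a neighborhood $W$.
\item The algebraic Beez--Killing step only needs one of the two shape operators to have rank $\ge 3$: the relation $A\wedge A=\tilde A\wedge\tilde A$ then forces $\tilde A=\pm A$. So $F(x)=Qx+b$ on $W$.
\item Analytic continuation on the connected surface $\Sigma_U$ gives $F(x)=Qx+b$ everywhere.
\end{itemize}
A full-rank point is easy to find on the minimum-distance locus, which is exactly what the paper uses. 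In your own notation:
\begin{itemize}
\item Type II: on the leaf where $\phi'=0$, every leaf eigenvalue equals $\phi\neq 0$, and the $\partial_s$-eigenvalue is $\phi''+\phi=(1-\lambda)\phi=-(n-2)\phi\neq 0$.
\item Type I: at the focal submanifold $M_1$, the Hessian equals $\phi(0)$ on $TM_1$. On the normal space it equals $\phi''(0)+\phi(0)=\phi(0)\bigl(1-\tfrac{n-1}{m_1+1}\bigr)\neq 0$, since $(m_1+1)\phi''(0)=-(n-1)\phi(0)$ and $m_1<n-2$.
\end{itemize}
In both cases $D^2U$ has rank $n-1\ge 3$ at that point.

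\textbf{Step 2 is a genuinely different route to $b=0$, but it has a missing step.} The paper first shows $Q(\partial\Omega^S_U)=\partial\Omega^S_V$. It then uses Faber--Krahn to get $Q(\Omega^S_U)=\Omega^S_V$, hence $\Sigma_V+b=\Sigma_V$, and rules out $b\neq 0$ by the curvature decay $|A_{\Sigma_V}(x)|=O(|x|^{-1})$. Your support-function identity $\la x,\nu\rg=1/|\nabla U|\to 1$ kills $b$ directly, which is a clean shortcut.

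However, you assert without proof that $Q\nu_U=\nu_V\circ F$. A priori the sign could be $-1$, meaning the rigid motion sends $\{U>1\}$ onto $\{V<1\}$. In that case your identity gives $\la Q^{T}b,\nu\rg\to -2$, not $0$. Excluding this needs an input like the paper's. Blowing down by $\lambda\to 0$, the swapped orientation would force $Q\Omega^S_U\cap\Omega^S_V=\emptyset$. That is impossible, because both spherical traces have area strictly greater than half the sphere.

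Also, the spanning of the limiting normals does not follow from the leaf being full. The correct reason: if $c\perp\nu$ along a leaf $M_s$, then the spherical gradient of $\la c,\cdot\rg$ is tangent to $M_s$. This forces $M_s$ to be a union of meridians through $\pm c/|c|$, i.e.\ a great sphere, which is the flat case.
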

\begin{remark}
The dimensional restriction $n \geq 4$ is necessary: for example, the catenoid and helicoid in $\mathbb{R}^3$ are isometric but not congruent. 
However, the only isoparametric foliation in $\mathbb{R}^3$ has $g=1$, and the corresponding non-flat homogeneous solutions are congruent to the De Silva-Jerison cone. 
\end{remark}
\begin{proof}
By the spherical Faber-Krahn inequality, the positivity set $\{ U > 0 \} \cap \bS^{n-1}$ is connected, so the $\cF_U$-invariance implies that $\partial \{ U > 0\} \cap \bS^{n-1}$ consists of at most two connected components, so $\Sigma_U$ has at most two ends.
Let $L\subset \mathbb{S}^{n-1}$ be the locus of points of $\Sigma_U$ with minimum distance locus to the origin.
For brevity, let $\ell := n-1- \dim L$.
If $\Sigma_U$ has one end, then $L \subset \bS^{n-1} ( \textup{dist}(\Sigma_U,0))$ is one of the focal submanifolds $M_1,M_2$ and $\ell = m_i + 1$; if $\Sigma_U$ has two ends, then $L$ is a regular leaf of the isoperimetric foliation and $\ell = 1$.
    
We fix a point $\omega_0 \in L$ in what follows, so $\nabla_{\mathbb{S}^{n-1}} U(\omega_0) = 0$.
Moreover, the spherical Hessian $D^2_{\mathbb{S}^{n-1}} U(\omega_0)$ vanishes along $T_{\omega_0} L$ and along the mixed tangential-normal directions. 
Observe that
\[
D^2_{\bS^{n-1}} U(\omega_0)|_{T^{\perp}_{\omega_0} L} = c \cdot \textup{id}_{T_{\omega_0}^{\perp} L}, \qquad \text{for some constant } \; c \neq 0.
\]
If $\Sigma_U$ has two ends, then $\dim T^{\perp}_{\omega_0} L = \ell = 1$, so this property is automatic.
When $\Sigma_U$ has two ends, the $\cF_U$-invariance implies that the value of $U$ depends only on the spherical distance to the focal submanifolds, so $D^2_{\bS^{n-1}} U(\omega_0)$ takes the form of a block matrix $\begin{psmallmatrix} 0 & 0 \\ 0 & \lambda \, \textup{Id}_{\ell} \end{psmallmatrix}$, for some $\lambda \in \bR$.
Then,
\[
\textup{tr} \, D^2_{\bS^{n-1}} U(\omega_0) = \Delta_{\bS^{n-1}} U(\omega_0) = - (n-1) U(\omega_0),
\]
which implies that $\lambda = - \frac{n-1}{\ell} U(\omega_0) \neq 0$.
At the corresponding $x_0 := \frac{\omega_0}{U(\omega_0)} \in \Sigma_U$, we compute
\[
A_{\Sigma_U}(x_0) = - ( D^2_{\bS^{n-1}} U(\omega_0) + U(\omega_0) \, \textup{Id}_{n-1}) 
\]
so $A_{\Sigma_U}(x_0)$ has full rank $n-1 \geq 3$.
By the same argument for $\Sigma_V$, $A_{\Sigma_V}$ has full rank $n-1$ everywhere.
Therefore, Lemma~\ref{lemma:shape-operators} implies that $F: (\Sigma_U, g_U) \to (\Sigma_V , g_V)$ is the restriction of an ambient Euclidean isometry $x \mapsto Qx+b$ near $x_0$.
The hypersurfaces $\Sigma_U, \Sigma_V$ are real-analytic and connected, so unique continuation implies that $F(x)=Qx+b$ everywhere, and thus $F$ is surjective.

It remains to show $b=0$; we first prove that $Q (\Sigma_U) = \Sigma_V$. 
Let us write $\Omega^S_U = \{ U > 0 \} \cap \bS^{n-1}$ and $\Omega^S_V = \{ V>0 \} \cap \bS^{n-1}$.
We recall that as $|x| \to \infty$ along $\Sigma_V$, the point $\frac{x}{|x|}$ tends to the boundary $\partial \Omega^S_V$.
Moreover, any $\omega\in \partial \Omega_U^S$ is the limit of a sequence $\omega_j \in \Omega_U^S$, and the corresponding sequence $x_j:= \frac{\omega_j}{U(\omega_j)}$ satisfies $x_j \in \Sigma_U$ and $|x_j| \to \infty$.
Since $\frac{Qx_j+b}{|Qx_j+b|} - \frac{Qx_j}{|Qx_j|} \to 0$ for $|x_j| \to \infty$, we obtain $\lim_{j \to \infty} Q \omega_j = \lim_{j \to \infty} \frac{F(x_j)}{|F(x_j)|} \in \partial \Omega^S_V$, and hence $Q (\partial\Omega_U^S) \subset \partial\Omega_V^S$. 
Using the surjectivity of $F$ and a symmetric argument, we deduce the reverse inclusion, and hence $\partial Q(\Omega^S_U) = \partial \Omega^S_V$.
On the other hand, the two spherical domains $Q (\Omega_U^S)$ and $\Omega_V^S$ are connected, have the same first Dirichlet eigenvalue $(n-1)$, and are not hemispheres, so by the Faber-Krahn inequality, they have volume greater than $\frac{1}{2} \mathcal{H}^{n-1}(\mathbb{S}^{n-1})$. 
Thus, they must intersect, and since their boundaries agree, we deduce that $Q(\Omega^S_U) = \Omega^S_V$.
The homogeneity now implies that $U(Qx) = V(x)$, so $Q(\Sigma_U) = \Sigma_V$ as claimed.

Finally, using $\Sigma_V = F(\Sigma_U) = Q (\Sigma_U) + b$, we deduce that $\Sigma_V + b = \Sigma_V$.
Recall from Lemma~\ref{lemma:euclidean-structure} that $|A_{\Sigma_V}(x)| = O ( |x|^{-1})$.
Hence, if $b \neq 0$, we have $|A_{\Sigma_V}(x)| = |A_{\Sigma_V}(x+mb)| = O(m^{-1}) \to 0$ as $m \to \infty$ for every fixed $x \in \Sigma_V$, so $\Sigma_V$ is a flat plane and $V$ is the half-plane solution, contradicting our assumption.
Therefore, $b=0$ and $V(Qx) = U(x)$ as desired.
\end{proof}

We now study the induced metric $g_{\Sigma}$ on the radial graph in terms of the geometry of the isoparametric foliation.
Away from the focal submanifolds, the normal parametrization of the sphere gives $g_{\bS^{n-1}} = ds^2 + g_{M_s}$, for $g_{M_s}$ the induced metric on the regular leaf $M_s$.
Consequently, the Euclidean metric assumes the form $g_{\textup{Euc}} = d \rho^2 + \rho^2 ( ds^2 + g_{M_s})$.
For an $\cF$-invariant homogeneous solution $U = \rho \phi(s)$, the radial graph $\Sigma_U$ can be parametrized by $\Sigma_U = \{ r(s) \omega : \omega \in M_s \}$, where $r(s) = \frac{1}{\phi(s)}$.
Since the radial direction is orthogonal to the tangent spaces of the leaves, we compute
\begin{align*}
g_{\Sigma} &\;= (r'(s)^2 + r(s)^2) \, ds^2 + r(s)^2 \, g_{M_s} = d \tau^2 + r(\tau)^2 g_{M_{s(\tau)}}, \\
\tau(s)  &:= \int_{s_0}^s \sqrt{r'(\sigma)^2 + r(\sigma)^2} \, d\sigma.
\end{align*}
We call $\tau(s)$ the arc-length parameter.
Thus, the geometry of the radial graph is determined by the radial profile r and the family of metrics $g_{M_s}$ induced on the isoparametric leaves. 

We next describe this expression in the axisymmetric and Clifford cases, where the geometry of the leaves gives rise to explicit warped-product metrics.
\begin{enumerate}[(i)]
    \item In the axisymmetric case $g=1$, the regular leaves are round spheres with $g_{M_s} = \sin^2 s \, g_{\bS^{n-2}}$.
    Writing $t = \cos s$ expresses the spherical metric as
    \[
    g_{\bS^{n-1}} = \tfrac{1}{1-t^2} dt^2 + (1-t^2) \, g_{\bS^{n-2}}.
    \]
    For the De Silva-Jerison solution $U = c \rho f(t)$, whose positivity interval is $(-t_n, t_n)$, the radial graph is parametrized by $r(t) = \frac{1}{c f(t)}$.
    Hence, the induced metric on the graph becomes
    \begin{align*}
    g_{\Sigma_{\textup{DSJ}}} &= \frac{f(t)^2 + (1-t^2) f'(t)^2}{c^2 f(t)^4 (1-t^2)} \, dt^2 + \frac{1-t^2}{c^2 f(t)^2} \, g_{\bS^{n-2}} = d \tau^2 + a(\tau)^2 \, g_{\bS^{n-2}},
\end{align*}
where $a(\tau) := \frac{\sqrt{1- \tau(t)^2}}{c \, f(\tau(t))}$ and we introduced the arc-length parameter
\[
\tau(t) := \int_0^t \frac{\sqrt{f(\xi)^2 + (1-\xi^2) f'(\xi)^2}}{ c \, f(\xi)^2 \sqrt{1-\xi^2}} \, d \xi.
\]
Therefore, $g_{\Sigma_{\textup{DSJ}}}$ admits a warped product representation.
The radial graph is two-ended, and its warping function is positive everywhere, attaining its minimum at the central leaf $t=0$. 

In contrast, the radial graph of a flat solution is an affine hyperplane, whose metric in geodesic polar coordinates centered at the point closest to the origin is $g_{\Sigma} = d \tau^2 + \tau^2 \, g_{\bS^{n-2}}$, $\tau \geq 0$.
Hence, the flat graph has a collapsing spherical factor, whereas the De Silva–Jerison graph has a non-collapsing central sphere and extends to infinity in both directions. This distinction is illustrated in Figure~\ref{fig:DSJSigma}.

\item In the Clifford case $g = 2$, the leaves $M_s = \bS^p (\sin s) \times \bS^q(\cos s)$ are products of rescaled hypertori with $p+q=n-2$, so $g_\Sigma$ becomes a doubly warped product over $\bS^p \times \bS^q$, as
\begin{align*}
    g_{M_s} = \sin^2 s \, g_{\bS^p} + \cos^2 s \, g_{\bS^q}, \qquad
    g_{\Sigma} = d \tau^2 + a_1(\tau)^2 \, g_{\bS^p} + a_2(\tau)^2 \, g_{\bS^q},
\end{align*}
where $a_1(\tau) = r(\tau) \sin s(\tau)$ and $\quad a_2(\tau) = r(\tau) \cos s(\tau)$.
The behavior of the warping functions distinguishes the three solutions.
For $U_{M_1}$, the first factor collapses at the focal manifold $M_1$, while the second remains positive. 
For $U_{M_2}$, the opposite occurs.
Both radial graphs have one end, but the collapsing factor identifies the focal submanifold in the minimum-distance locus.

For the Type II solution $\bar{U}_M$, neither factor collapses; both warping functions remain positive, and the radial graph has two ends, with its minimum-radius locus corresponding to a regular Clifford hypersurface. 
In this case, the positivity set of $\bar{U}_M$ contains only regular leaves.
The three possibilities for the warping functions $a_j(\tau)$ are illustrated in Figure~\ref{fig:g2WarpingFunctions}.
\end{enumerate}

\begin{figure}
    \centering
    \includegraphics[width=0.4\linewidth]{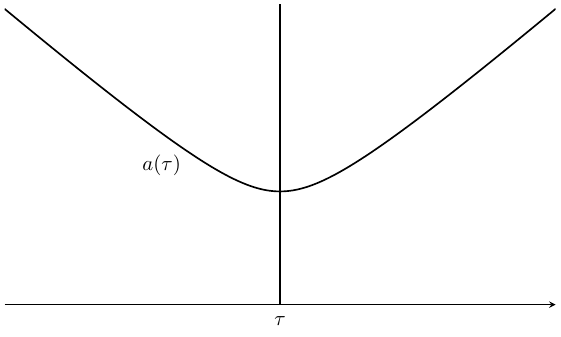}
    \caption{The warping function $a(\tau)$ describing the geometry of the radial graph $\Sigma_{\bar{U}_{\bS^{n-2}}}$ for the De Silva-Jerison cone in $n = 7$.
    Since this cone does not contain the focal manifold, it is of Type II and this warping function never vanishes.
    From Theorem~\ref{thm:unified-theorem}, the Type I case with this symmetry is the Euclidean metric where $a(\tau) = \tau$. 
    }
    \label{fig:DSJSigma}
\end{figure}

\begin{figure}
    \centering
    \includegraphics[width=0.3\linewidth]{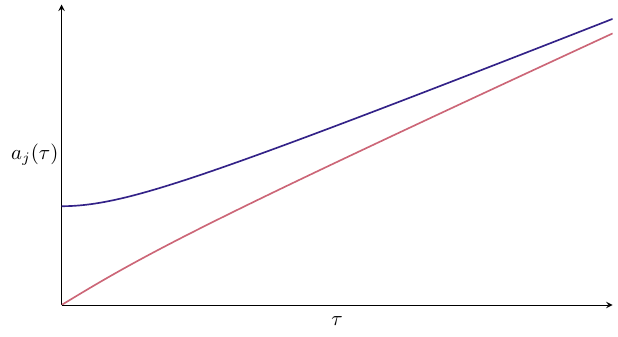}
    \includegraphics[width=0.3\linewidth]{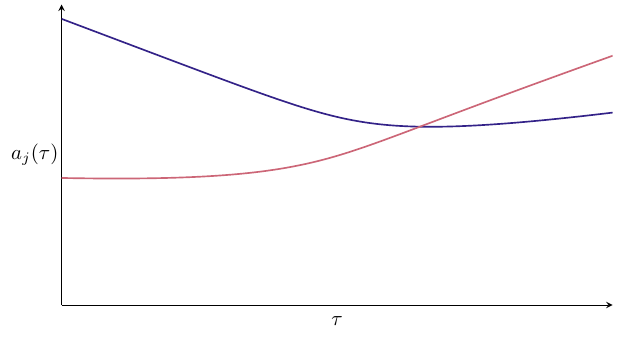}
    \includegraphics[width=0.3\linewidth]{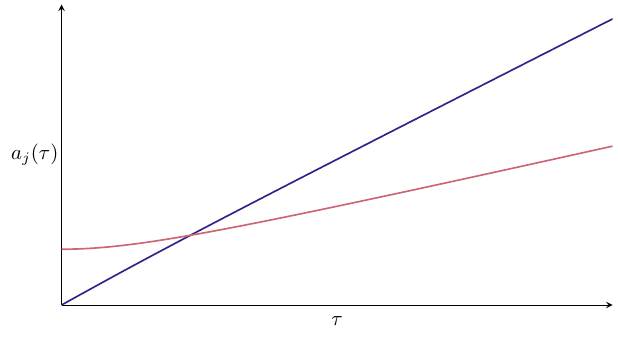}
    \caption{The warping functions $a_j(\tau)$ from metric for $\Sigma$, corresponding to the Clifford foliation by $\bS^4 \times \bS^1 \subset \bS^6$.
    The outer graphs display the two Type I examples, where the $y$-axis represents the focal manifold where $\bS^p$ or $\bS^q$ collapses.
    The middle graph shows the warping functions for the Type II curve, where neither vanishes. 
    }
    \label{fig:g2WarpingFunctions}
\end{figure}

Finally, for $g \geq 3$, the induced metric $g_{M_s}$ does not admit a multiple-warped product representation because the decomposition of $TM_s$ into the sum of the $g$ distributions corresponding to each principal curvature does not arise from a product decomposition of $M_s$.
We obtain the following description.

Let $M_{s_0} \subset \bS^{n-1}$ be the unique minimal leaf of the isoparametric foliation $\{ M_s \}$ and let $\Phi_s : M_{s_0} \to M_s$ be the diffeomorphism given by the normal parallel map, which is expressible as
\begin{equation}\label{eqn:Phi-s(x)}
\Phi_s(x) = \cos (s - s_0) \, \Phi_{s_0}(x) + \sin (s - s_0) \, \nu(x) 
\end{equation}
for $\Phi_{s_0} : M_{s_0} \to \bS^{n-1}$ the embedding map. 
Let $A_0$ be the shape operator of the embedding $M_{s_0} \hookrightarrow \bS^{n-1}$ and let $E_j \subset TM_{s_0}$ denote the principal distributions of the tangent bundle consisting of all tangent directions with the same principal curvature $\lambda_j(s_0)$, where $\lambda_j(s) = - \cot ( s_0 + \frac{(j-1) \pi}{g})$ with the convention $d \nu = - d \Phi_{s_0} \circ A_0$; see, for example, the computation in~\cite{cecilRyan}.
Differentiating the equation~\eqref{eqn:Phi-s(x)} in some tangent direction $X \in T_x M_{s_0}$ and using the Weingarten formula $d \nu(X) = - d \Phi_{s_0}(A_0) X)$, we find
\begin{align*}
    d \Phi_s|_x(X) &= \cos(s-s_0) \, d \Phi_{s_0}(X) + \sin(s-s_0) \, d \nu(X) \\
    &= d \Phi_{s_0} \bigl[ (\cos(s-s_0) \, I - \sin (s-s_0) A_0 ) \, X \bigr] = \frac{\sin \bigl(s + \frac{(j-1) \pi}{g} \bigr)}{\sin \bigl( s_0 + \frac{(j-1) \pi}{g} \bigr)} \, d \Phi_{s_0}(X).
\end{align*}
We write $g_j$ for the restriction of the induced metric to $E_j$, so using the fact that the principal distributions $E_j$ are eigenspaces of the Weingarten map and mutually orthogonal, we obtain
\[
\Phi^*_s g_{M_s} = \sum_{j=1}^g \left( \frac{\sin \bigl(s + \frac{(j-1) \pi}{g} \bigr)}{\sin \bigl( s_0 + \frac{(j-1) \pi}{g} \bigr)} \right)^2 g_j.
\]
The induced metric on the radial graph is therefore
\[
g_\Sigma = d\tau^2 + r(\tau)^2 \sum_{j=1}^g \left(\frac{\sin\bigl(s+\frac{(j-1)\pi}{g}\bigr)}{\sin\bigl(s_0+\frac{(j-1)\pi}{g}\bigr)}\right)^2g_j,
\]
for $(\tau, r(\tau))$ the arc-length parameter and radial coordinate of $\Sigma$.
Notice that $g_{\Sigma}$ is diagonal with respect to the principal curvature decomposition; for $g=2$, it reduces to the doubly warped product metric obtained above, while for $g \geq 3$, it does not integrate to a genuine product metric.

\begin{remark}
The preceding classification, together with Theorem~\ref{thm:unified-theorem}, distinguishes two levels of rigidity.
The prescribed isoparametric foliation, together with the radial geometry of $\Sigma$, determines the homogeneous solution. For homogeneous foliations, the same conclusion follows from invariance under the corresponding ambient cohomogeneity-one action. By contrast, the abstract isometry group of $\Sigma_U$ need not by itself distinguish the solutions associated with different focal submanifolds.
Indeed, for $g = 2$, every block-form orthogonal transformation preserves the profile, so both $U_{M_1}$ and $U_{M_2}$ have the same isometry group $\Gamma_{U_{M_1}}^0 = \Gamma^0_{U_{M_2}} = \textup{SO}(p+1) \times \textup{SO}(q+1)$; however, $\Sigma_{M_1} \cong \bS^p \times \bR^{q+1}$ and $\Sigma_{M_2} \cong \bR^{p+1} \times \bS^q$ are not even homotopy equivalent when $p \neq q$. 
These are distinguishable by the topology, but not the algebra. 

The more sophisticated foliations when $g > 2$ yield homogeneous solutions with distinct behaviors that are not distinguishable only by the topology.
The case of $(g,m_1,m_2) = (6,1,1)$ has focal submanifolds $M_1, M_2$ diffeomorphic to $\bR\bP^2 \times \bS^3$, but not isometric to each other.
Consequently, the radial graphs $\Sigma_{M_1} \cong \Sigma_{M_2}$ are diffeomorphic but not isometric, else Proposition \ref{prop:intrinsic-ambient-isometry} would force their minimum-distance loci $M_1, M_2$ to be isometric. 
This example exhibits two solutions $U_{M_1}$ and $U_{M_2}$ being genuinely distinct, despite having the same profile curves.
\end{remark}

\section{Existence and uniqueness of solutions}\label{section:one-phase}
We now perform the ODE analysis to show the existence and uniqueness of solutions of both types.
While the Type I solutions $U_{M_1}$ and $U_{M_2}$ are determined uniquely via the hypergeometric functions $f_{M_1}$ and $f_{M_2}$, the Type II solution $\bar{U}_M$ arises via a shooting-continuity argument, so its uniqueness is more complicated to establish.
This situation is analogous to the corresponding problem for minimal surfaces and self-shrinkers of mean curvature flows, where Brendle's remarkable paper~\cite{clifford-tori} proved the Lawson conjecture on the uniqueness of the Clifford torus in $\bS^3$, later extended to CMC tori and Weingarten tori~\cites{weingarten-tori , andrews-li-cmc-tori}.
The uniqueness of the Angenent torus~\cite{angenent-tori} remains widely open, as well as the minimal surface counterparts of the $\bar{U}_M$ constructed in~\cites{new-minimal-surfaces , capillary-FBP}.
The uniqueness of the axisymmetric capillary cones of~\cite{FTW-1} was proved in~\cite{new-minimal-surfaces}.

To write \textbf{Type I solutions} explicitly, we perform a change of variables to relate solutions to \eqref{eq:ode-s} with hypergeometric functions. When $g=1$, sometimes a different change of variables is used, because it is more natural to center the problem at the equator, instead of the two focal submanifolds. 
A direct change of variables $t := \sin \frac{gs}{2} \in [0,1]$ shows that a function $\phi(s)$ satisfies the equation~\eqref{eq:ode-s}, with $H = \frac{g}{2} (m_2 \tan \frac{g s}{2} - m_1 \cot \frac{g s}{2} )$, if and only if $f(t) := \phi(s)$ is in the kernel of the operator
    \begin{equation}\label{eqn:self-adjoint-legendre}\tag{L}
\begin{split}
    \cL_{M} f &:= (1-t^2) f'' + \bigl( \tfrac{m_1}{t} - (m_1 + m_2 + 1) t \bigr) f' + \tfrac{4\lambda}{g^2} f. \\
\end{split}
\end{equation}
Similarly, for $g=1$, we consider $t := \cos s \in [-1,1]$ to obtain $f \in \ker \cL_{\bS^{n-2}}$, where
\[
\cL_{\bS^{n-2}} f := (1-t^2) f'' - (n-1) tf' + \lambda f.
\]

We now define the auxiliary parameter
\[ 
\nu= \frac{1}{2} \left( \sqrt{\left( \frac{m_1+m_2}{2}\right)^2 + \frac{4\lambda}{g^2}}-\frac{m_1+m_2}{2} \right).
\]
In particular, $\nu = \frac{1}{g}$ when $\lambda=n-1$.
We define the function
\begin{equation}\label{eqn:Phi-M1}
    \Phi_{M_1}(s) = {}_2 F_1 \Bigl( \nu + \frac{m_1+m_2}{2} , - \nu; \frac{m_1+1}{2} ; \sin^2 \frac{gs}{2} \Bigr),
\end{equation}
given by the hypergeometric function solving \eqref{eqn:self-adjoint-legendre} that is regular at $0$. 
Hence, $\Phi_{M_1}$ is a solution of the ODE~\eqref{eq:ode-s}, and is the normalized regular solution at $s=0$; moreover, it is the unique Type I profile, up to scaling, which is regular at $0$. 
Let $s_{M_1} \in (0,\frac{\pi}{g})$ denote its first zero, whose existence is guaranteed by the asymptotic properties of hypergeometric functions near the endpoints $z=0$ and $z=1$.
Scaling by $\frac{1}{|\Phi'_{M_1}(s_{M_1})|}$ satisfies the Neumann boundary condition and produces the required solution of \eqref{eq:homsph} and \eqref{eq:extremal}.
Thus in particular, $U_{M_1}(\rho, \omega) = \frac{1}{|\Phi'_{M_1}(s_{M_1})|} \rho \Phi_{M_1}(s\omega)$ is a homogeneous solution of the Bernoulli free boundary problem.
Similarly, let $\Phi_{M_2}(s) = {}_2 F_1( \nu + \frac{m_1+m_2}{2} , - \nu; \frac{m_2+1}{2} ; \cos^2 \frac{gs}{2})$, and the case of $M_2$ follows directly under the change of variables $s \mapsto \frac{\pi}{g}-s$ and $m_1 \leftrightsquigarrow m_2$.
These are precisely the solutions~\eqref{eqn:U-m-foliation} in Theorem~\ref{thm:isoparametric-foliation-one-phase-cone}.

For all but countably many values of $\lambda$, the hypergeometric function solving \eqref{eqn:self-adjoint-legendre} which is regular at one endpoint blows up at the other endpoint, and they two form a pair of fundamental solutions to the ODE and define the two Type I solutions $\Phi_{M_1}, \Phi_{M_2}$. However, when $\lambda=g^2k(k+\frac{m_1+m_2}{2})$, $k\in \mathbb{N}$, these two hypergeometric functions coincide and become a polynomial, and thus it is regular at both endpoints. The latter is the case when $\lambda=n-1$ and $g=1$, where a pair of fundamental solution to \eqref{eqn:self-adjoint-legendre} is given by the hypergeometric function $f(t)=t$ and a (even) singular solution that gives rise to the De Silva-Jerison cone.

The construction of \textbf{Type II solutions}, on the other hand, is by a shooting-continuity argument.
\begin{lemma}\label{lemma:uniqueness-solution}
    Let $H \in C^1(a,b)$ satisfy $H'>0$, and $\lambda>0$.
    For $\sigma$ in an interval $I \subset (a,b)$, let $\phi_{\sigma}$ solve the initial value problem 
    \[
    \phi_{\sigma}'' - H \phi'_{\sigma} + \lambda \phi_{\sigma} = 0, \qquad \phi_{\sigma}(\sigma) = 0, \quad \phi'_{\sigma}(\sigma) = 1.
    \]
    Suppose that, for every $\sigma \in I$, $\phi_{\sigma}$ has a second zero $\tau(\sigma)> \sigma$, is positive on $(\sigma, \tau(\sigma))$. Then
    \begin{enumerate}[(i)]
        \item It satisfies $(\phi'_{\sigma})^2 - \phi_{\sigma} \phi''_{\sigma} >  0$ on this interval, i.e. $\phi_\sigma$ is $\log$-concave.
        \item The maps $\sigma \mapsto \tau(\sigma)$ and $\sigma \mapsto - \phi_{\sigma}'(\tau(\sigma))$ are strictly increasing on $I$.
    \end{enumerate} 
\end{lemma}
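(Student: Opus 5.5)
The proof rests on the linear structure of the ODE together with the weight $p(s):=\exp\bigl(-\int^s H\bigr)$. In terms of $p$, the equation reads $(p\phi')'+\lambda p\phi=0$, and the Wronskian of any two solutions, multiplied by $p$, is constant.

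\emph{Step (i).} Set $Q:=(\phi_\sigma')^2-\phi_\sigma\phi_\sigma''$. Using $\phi''=H\phi'-\lambda\phi$ and its derivative $\phi'''=H'\phi'+H\phi''-\lambda\phi'$, a direct computation should give
\[
Q' = HQ - H'\phi\phi', \qquad\text{equivalently}\qquad (pQ)'=-pH'\phi_\sigma\phi_\sigma'.
\]
Next I locate the critical points of $\phi_\sigma$ on $(\sigma,\tau)$. At any critical point $\phi''=-\lambda\phi<0$, so every critical point is a strict local maximum. Hence $\phi_\sigma$ has exactly one critical point $m\in(\sigma,\tau)$, with $\phi'>0$ on $(\sigma,m)$ and $\phi'<0$ on $(m,\tau)$. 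Since $H'>0$, the identity above shows that $pQ$ is decreasing on $[\sigma,m]$ and increasing on $[m,\tau]$. Its minimum is therefore attained at $m$, where $Q(m)=\lambda\phi_\sigma(m)^2>0$. This gives $Q>0$ on the whole closed interval. (At the endpoints, $Q(\sigma)=1$ and $Q(\tau)=\phi'(\tau)^2>0$, the latter because $\phi'(\tau)\neq0$ by uniqueness for the IVP.)

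\emph{Step (ii), monotonicity of $\tau$.} Let $\psi:=\partial_\sigma\phi_\sigma$, which solves the same linear ODE. Differentiating the initial conditions in $\sigma$ gives $\psi(\sigma)=-\phi_\sigma'(\sigma)=-1$ and $\psi'(\sigma)=-\phi_\sigma''(\sigma)=-H(\sigma)$. The weighted Wronskian $p(\phi_\sigma\psi'-\phi_\sigma'\psi)$ is constant, and at $s=\sigma$ it equals $p(\sigma)>0$. Evaluating at $\tau$ gives $-p(\tau)\phi_\sigma'(\tau)\psi(\tau)=p(\sigma)$, so $\psi(\tau)>0$. Differentiating $\phi_\sigma(\tau(\sigma))=0$ and using the implicit function theorem (justified since $\phi'(\tau)<0$), we get
\[
\tau'(\sigma)=-\frac{\psi(\tau)}{\phi_\sigma'(\tau)}>0 .
\]

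\emph{Step (ii), monotonicity of $-\phi_\sigma'(\tau(\sigma))$.} Differentiating, and using $\phi''(\tau)=H(\tau)\phi'(\tau)$, gives
\[
\frac{d}{d\sigma}\bigl[-\phi_\sigma'(\tau(\sigma))\bigr]=H(\tau)\psi(\tau)-\psi'(\tau),
\]
so I must show $\psi'(\tau)<H(\tau)\psi(\tau)$. For this I introduce $z:=\psi+\phi_\sigma'$. It satisfies $z(\sigma)=z'(\sigma)=0$ and the forced equation $z''-Hz'+\lambda z=H'\phi_\sigma'$, and the target quantity becomes $Hz-z'$ at $\tau$. I will compute $(pW(\phi,z))'=pH'\phi\phi'$ and compare with the identity from Step (i). The key observation is that $\phi'$, $\phi$ and $z$ are linked through $Q$: the forcing $H'\phi'$ appears both in the equation for $z$ and in $(pQ)'$. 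My plan is to find a combination of the form $p(\alpha z' + \beta z)$, with coefficients built from $\phi,\phi'$, whose derivative is a positive multiple of $H'Q$ or of $H'(\phi')^2$. Integrating that combination from $\sigma$ to $\tau$ should then sign $Hz-z'$ at $\tau$.

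Finding the right multiplier in this last step is the main obstacle, and it is where the log-concavity from part (i) has to be used. If no clean identity appears, I would instead compare $\phi_{\sigma_1}$ and $\phi_{\sigma_2}$ for $\sigma_1<\sigma_2$ directly. The idea is to study the ratio $\phi_{\sigma_2}/\phi_{\sigma_1}$ and the Riccati variables $\phi'/\phi$, which are decreasing by (i), and run a Sturm-type comparison on the overlap of the two positivity intervals. The monotonicity of $H$ supplies the favourable sign in that comparison.
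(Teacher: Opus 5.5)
\textbf{What works.} Your Step (i) is the paper's argument. Your proof that $\tau'(\sigma)=-\psi(\tau)/\phi_\sigma'(\tau)>0$, via the weighted Wronskian identity $-p(\tau)\phi_\sigma'(\tau)\psi(\tau)=p(\sigma)$, is correct. It is more direct than the paper's route, which reads off $\tau'=J_\sigma(\pi)$ from a phase reparametrization, and it does not even use $H'>0$.

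\textbf{The gap: the second half of (ii).} You correctly reduce the monotonicity of $-\phi'_\sigma(\tau(\sigma))$ to $\psi'(\tau)<H(\tau)\psi(\tau)$, but you give no argument for it. The one concrete computation you propose cannot succeed. Since $W(\phi,z)=W(\phi,\psi)-Q$, the identities $(pW(\phi,z))'=pH'\phi\phi'$ and $(pQ)'=-pH'\phi\phi'$ just add up to $pW(\phi,\psi)\equiv p(\sigma)$, the Wronskian you already used. Because $\phi_\sigma(\tau)=0$, that constant determines only $\psi(\tau)$, never $\psi'(\tau)$. You need a new input in which $H'>0$ enters quantitatively: for constant $H$ the family $\phi_\sigma$ is translation invariant, and the terminal slope does not depend on $\sigma$ at all. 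The Sturm-comparison fallback is not an argument as stated either. Comparing $\phi_{\sigma_2}/\phi_{\sigma_1}$ on the overlap controls zeros, not the slopes at the two different points $\tau(\sigma_1)<\tau(\sigma_2)$.

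\textbf{How the paper closes it.} The paper uses a Pr\"ufer transformation. Write $\phi'=\sqrt{E}\cos\theta$ and $\sqrt{\lambda}\,\phi=\sqrt{E}\sin\theta$ with $E=(\phi')^2+\lambda\phi^2$. Then $\theta'=\sqrt{\lambda}\,Q/E=\sqrt{\lambda}-\frac12 H\sin 2\theta>0$ by (i), so $\theta\in[0,\pi]$ can serve as the independent variable. This freezes the integration domain in
\[
\log\phi_\sigma'(\tau)^2=\int_0^\pi \frac{2H(s_\sigma(\theta))\cos^2\theta}{\sqrt{\lambda}-\frac12 H(s_\sigma(\theta))\sin2\theta}\,d\theta .
\]
The integrand is increasing in $s$: its $s$-derivative is $2\sqrt{\lambda}\,H'\cos^2\theta$ divided by the squared denominator. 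Moreover $J_\sigma=\partial_\sigma s_\sigma(\theta)>0$, so the integral increases with $\sigma$.

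\textbf{Your multiplier plan also works.} It is the same argument written in the $s$-variable. Set $N:=\phi'\psi'-\phi''\psi=\phi'z'-\phi''z$. One checks $N'=HN-H'\phi'\psi$. Combining this with $Q'=HQ-H'\phi\phi'$ and $\phi\psi'-\phi'\psi=p(\sigma)/p$ gives
\[
\Bigl(\frac{N}{Q}\Bigr)'=\frac{p(\sigma)\,H'\,(\phi_\sigma')^2}{p\,Q^2}\geq 0 ,
\]
with equality only at the critical point $m$. We have $N(\sigma)=0$, and $(N/Q)(\tau)=\bigl(\psi'(\tau)-H(\tau)\psi(\tau)\bigr)/\phi_\sigma'(\tau)$ with $\phi_\sigma'(\tau)<0$. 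Integrating from $\sigma$ to $\tau$ therefore yields $\psi'(\tau)<H(\tau)\psi(\tau)$. Dividing by $Q$ is exactly where (i) enters, and $p(\sigma)/(pQ)$ is the paper's $J_\sigma$ written as a function of $s$.
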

\begin{remark}
Under the stronger assumption $H'> \lambda$, the solution $\phi_\sigma$ is strictly concave, because $( m \phi''_{\sigma})' = m ( H' - \lambda) \phi'_{\sigma}$ implies that $( m \phi''_{\sigma})'$ and $\phi'_{\sigma}$ retain the same sign.
For the Bernoulli free boundary problem, where $\lambda = n-1$, we indeed have $H' > n-1$, due to the sharp inequality
    \begin{align*}
    H'(s) = \tfrac{1}{4} g^2  (m_2 \sec^2 \tfrac{gs}{2} + m_1 \csc^2 \tfrac{gs}{2}) &= \tfrac{1}{4} g^2 (m_1 + m_2 + m_2 \tan^2 \tfrac{gs}{2} + m_1 \cot^2 \tfrac{gs}{2}) \\
    &\geq \tfrac{1}{4} g^2 (m_1 + m_2 + 2 \sqrt{m_1 m_2}). 
\end{align*}
Since $n-1 = 1 + \frac{g}{2} (m_1+m_2)$, we obtain
\[
H' - (n-1) \geq \tfrac{1}{4} g^2 ( \sqrt{m_1} + \sqrt{m_2})^2 - 1 - \tfrac{1}{2} g(m_1+m_2) = \tfrac{1}{4} g(g-2) (m_1+m_2) + \tfrac{1}{2} g^2 \sqrt{m_1m_2} - 1.
\]
This expression is strictly positive for $g \geq 2$ and $m_1 , m_2 \geq 1$, hence $H'(s) > n-1$ for every $s\in (0 , \frac{\pi}{g})$.
\end{remark}
\begin{proof}
    Let $q(s) := \exp ( - \int_{s_0}^s H(r) \, dr)$, so the equation becomes $(q \phi'_\sigma)' =- \lambda q \phi_\sigma < 0$, and thus $q \phi'_\sigma$ is decreasing. 
    Since at the endpoints $\phi'_\sigma(\sigma)>0, \phi'_\sigma(\tau(\sigma))<0$, it follows that $\phi_\sigma$ has a unique critical point at $c \in (\sigma, \tau(\sigma))$; moreover $\phi'_\sigma > 0$ before $c$, and $\phi'_\sigma<0$ after $c$. 
    Thus, $c$ is a local maximum, and $\phi''_{\sigma}(c)<0$.
    Let $\psi:= (\phi'_{\sigma})^2 - \phi_{\sigma} \phi''_{\sigma}$, for which we compute $\psi' - H \psi = - H' \phi \phi'$ and
    \[
    (q\psi)' = q' \psi + q \psi' = q (-H \psi + \psi') = - q H' \phi \phi'.
    \]
    Hence, $m\psi$ is strictly decreasing before $c$, and strictly increasing after $c$. 
    In particular, $\psi(c) = (\phi'_{\sigma}(c))^2 - \phi_{\sigma}(c) \phi''_{\sigma}(c) > 0$ implies that $\psi(s) \geq \frac{(q \psi)(c)}{q(s)} > 0$ for $s>c$.

    We define the energy $E_{\sigma}$ and a continuous phase $\theta(s)$ by
    \[
E_{\sigma} := (\phi'_{\sigma})^2 + \lambda \phi^2_{\sigma}, \qquad \phi'_{\sigma} = \sqrt{E_{\sigma}} \, \cos \theta, \qquad \sqrt{\lambda} \, \phi_{\sigma} = \sqrt{E_{\sigma}} \, \sin \theta, \qquad \theta(\sigma) = 0.
\]
The definition of $\theta$ implies that $E'_{\sigma} = 2 H(s) E_{\sigma} \cos^2 \theta$ and
\[
\theta' = \sqrt{\lambda} E_{\sigma}^{-1} [ ( \phi'_{\sigma})^2 - \phi_{\sigma} \phi''_{\sigma}] = \sqrt{\lambda} - \tfrac{1}{2} H(s) \sin ( 2 \theta)
\]
where the expression is strictly positive on $(\sigma, \tau(\sigma))$, by the assumption on $\phi_{\sigma}$.
Thus, $\theta$ increases from $0$ to $\pi$ as $s$ increases from $\sigma$ to $\tau(\sigma)$.
This allows us to write $s = s_{\sigma}(\theta)$, whereby
\begin{equation}\label{eqn:phase-general-equation}
    \frac{d s_{\sigma}}{d \theta} = \frac{1}{\sqrt{\lambda} - \frac{1}{2} H(s_{\sigma}) \sin ( 2\theta)}, \qquad  \frac{d}{d \theta} \log E_{\sigma} (s_{\sigma}(\theta)) = \frac{2 H(s_{\sigma}) \cos^2 \theta}{\sqrt{\lambda} - \frac{1}{2} H(s_{\sigma}) \sin(2 \theta)}.
\end{equation}
    The common denominator is positive along every trajectory $\theta \mapsto s_{\sigma}(\theta)$ because it equals $\theta'$.
    Since $E_{\sigma}(\sigma) = 1$ and $E_{\sigma}(\tau(\sigma)) = \phi'_{\sigma} ( \tau(\sigma))^2$, integrating the second equation~\eqref{eqn:phase-general-equation} produces
    \begin{equation}\label{eqn:log-phi-prime}
    \log \phi'_{\sigma} (\tau(\sigma))^2 = \int_0^{\pi} \frac{2 H(s_{\sigma}(\theta)) \cos^2\theta}{\sqrt{\lambda} - \frac{1}{2} H(s_{\sigma}(\theta)) \sin(2 \theta)} \, d\theta.
    \end{equation}
    By the standard continuous dependence for ODEs, the simple zero $\tau(\sigma)$ depends smoothly on $\sigma$, hence so does $\sigma \mapsto \phi'_{\sigma}(\tau(\sigma))$.
    For $\sigma \in I$, we let $J_{\sigma}(\theta) := \frac{\partial s_{\sigma}}{\partial \sigma}(\theta)$, so differentiating~\eqref{eqn:phase-general-equation} produces
    \begin{align*}
    J'_{\sigma} (\theta) &= \frac{\frac{1}{2} H'(s_{\sigma}(\theta)) \sin(2 \theta)}{( \sqrt{\lambda} - \frac{1}{2} H(s_{\sigma}(\theta)) \sin ( 2 \theta))^2} J_{\sigma}(\theta), \qquad J_{\sigma}(0) = 1, \\
    &\frac{\partial}{\partial s} \Bigl( \frac{2 H(s) \cos^2 \theta}{\sqrt{\lambda} - \frac{1}{2} H(s) \sin (2 \theta)} \Bigr) = \frac{2 \sqrt{\lambda} H'(s) \cos^2 \theta}{ ( \sqrt{\lambda} - \frac{1}{2} H(s) \sin (2 \theta) )^2} > 0 \quad \text{for } \; \theta \neq \frac{\pi}{2}.
    \end{align*}
    In particular, $J_{\sigma}(\theta) > 0$ for every $\theta \in [0,\pi]$.
    Therefore, $\tau'(\sigma) = J_{\sigma} ( \pi) > 0$ implies that the map $\sigma \mapsto \tau(\sigma)$ is strictly increasing.
    As a result, differentiating the relation~\eqref{eqn:log-phi-prime} gives
    \[
    \frac{d}{d \sigma} \log \phi'_{\sigma}(\tau(\sigma))^2 = \int_0^{\pi} \frac{2 \sqrt{\lambda}  H'(s_{\sigma}(\theta)) \cos^2 \theta}{( \sqrt{\lambda} - \frac{1}{2} H(s_{\sigma}(\theta)) \sin ( 2 \theta))^2} J_{\sigma}(\theta) \, d \theta
    \]
    where the integrand is non-negative and strictly positive except at $\theta = \frac{\pi}{2}$.
    Then, $\frac{d}{d \sigma} \log \phi'_{\sigma}(\tau(\sigma))^2 > 0$ implies that $\sigma \mapsto - \phi'_{\sigma}(\tau(\sigma))$ is a strictly increasing function on $I$.
\end{proof}

\begin{proposition}\label{prop:general-uniqueness-argument}
Consider a function $H \in C^1(a,b)$ with $H'(s) > 0$ and suppose that
\begin{equation}\label{eqn:blowup-condition}
\int_a^{s_0} \exp \Bigl( \int_{s_0}^s H(r) \, dr \Bigr) \, ds = \infty, \qquad \int_{s_0}^b \exp \Bigl( \int_{s_0}^s H(r) \, dr \Bigr) \, ds = \infty
\end{equation}
for some $s_0 \in (a,b)$. Let $\lambda>0$.
For $\sigma \in (a,b)$, let $\phi_{\sigma}$ denote the solution of equation
    \[
    \phi_{\sigma}'' - H \phi'_{\sigma} + \lambda \phi_{\sigma} = 0, \qquad \phi_{\sigma}(\sigma) = 0, \quad \phi'_{\sigma}(\sigma) = 1.
    \]
There is a unique $\sigma_* \in (a,b)$ such that $\phi_{\sigma_*}$ reaches a second zero at $\tau(\sigma_*)$, with $\phi'_{\sigma_*} (\tau(\sigma_*)) = -1$.
\end{proposition}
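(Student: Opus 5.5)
Uniqueness will follow from Lemma~\ref{lemma:uniqueness-solution}, and existence from a shooting argument with a continuity step. Let $I \subset (a,b)$ be the set of $\sigma$ for which $\phi_\sigma$ has a second zero $\tau(\sigma) \in (\sigma, b)$. Since $\tau(\sigma)$ is a simple zero, the implicit function theorem shows that $I$ is open.

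\emph{First step: $I$ is an interval $(a, \sigma^*)$.} Sturm comparison handles the left end. As $\sigma \to a$, the drift $-H\phi'$ becomes strongly positive damping in the region where $H \to -\infty$. I would rather not rely on that picture, so I will instead use the Prüfer phase $\theta$ from the proof of Lemma~\ref{lemma:uniqueness-solution}, which satisfies $\theta' = \sqrt{\lambda} - \tfrac12 H \sin 2\theta$.

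Because $H$ is increasing, comparison of phases in $\sigma$ works as follows. The condition $\theta(s) = \pi$ at some $s < b$ is exactly $\sigma \in I$. The derivative formula $J_\sigma > 0$ from the lemma gives monotonicity of $\tau$ along $I$, and I would use the same Riccati comparison to show that if $\sigma_2 \in I$ and $\sigma_1 < \sigma_2$, then $\sigma_1 \in I$. To see this, note that the equation for $\theta$ as a function of $s$ depends on $s$ only through $H(s)$. Shifting the starting point to the left lowers $H$, and lower $H$ speeds up $\theta$ in the relevant quadrant $\theta \in (0,\pi/2)$. In $(\pi/2,\pi)$ lower $H$ slows $\theta$, so here I would apply the comparison through the reparametrization $s_\sigma(\theta)$ as in the lemma. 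Consequently $I = (a, \sigma^*)$ for some $\sigma^* \le b$.

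\emph{Second step: endpoint limits of $\sigma \mapsto -\phi'_\sigma(\tau(\sigma))$.} By Lemma~\ref{lemma:uniqueness-solution}(ii) this map is strictly increasing on $I$, so it suffices to show it tends to a value below $1$ as $\sigma \to a$ and to a value above $1$ (or $+\infty$) as $\sigma \to \sigma^*$. The intermediate value theorem then gives existence, and strict monotonicity gives uniqueness.

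Both limits use the blow-up condition~\eqref{eqn:blowup-condition}, through the identity $(q\phi')' = -\lambda q \phi$ with $q = e^{-\int H}$. Integrating from $\sigma$ to $\tau$ gives
\[ q(\tau)\phi'(\tau) - q(\sigma) = -\lambda \int_\sigma^\tau q\phi, \]
and also $E'_\sigma = 2HE_\sigma\cos^2\theta$.

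Near $a$, $H$ is very negative, so the energy $E$ decays while $H < 0$, and $\log \phi'(\tau)^2$ in~\eqref{eqn:log-phi-prime} becomes negative. Thus $-\phi'_\sigma(\tau(\sigma)) < 1$ for $\sigma$ close to $a$. The condition $\int_a 1/q = \infty$ is what prevents the solution from escaping before it turns around, i.e.\ it keeps $\tau$ from drifting to the singular end.

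Near $\sigma^*$ there are two cases. If $\sigma^* < b$, then $\tau(\sigma) \to b$, and the condition $\int^b 1/q = \infty$ forces $|\phi'_\sigma(\tau)| \to \infty$. Indeed, if $\phi'$ stayed bounded near $b$, the limiting solution $\phi_{\sigma^*}$ would stay positive and decreasing up to $b$. Writing $\phi' = -\lambda q^{-1}\int q\phi$ would then force $\phi$ to reach $-\infty$ by the divergence of $\int 1/q$, which is a contradiction. If $\sigma^* = b$, then $H \to +\infty$ and the energy grows, which gives the same conclusion.

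\emph{Main obstacle.} I expect the hardest step to be the endpoint analysis: proving rigorously that $I$ is a nonempty interval and that $-\phi'_\sigma(\tau(\sigma))$ crosses $1$. The derivative formulas in the lemma give monotonicity only on $I$, not control of the boundary behavior. My first attempt would be to run the comparison entirely in the phase variable $\theta$, with a sign analysis of~\eqref{eqn:log-phi-prime} according to whether $H(s_\sigma(\theta))$ is negative or positive across $\theta \in (0,\pi)$.
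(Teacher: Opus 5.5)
Your skeleton matches the paper's: strict monotonicity of the terminal slope from Lemma~\ref{lemma:uniqueness-solution}, limits at the two ends of the admissible set $I$, then the intermediate value theorem. Downward-closedness of $I$ is also fine if you replace the incomplete phase comparison by Sturm separation: for $\sigma_1<\sigma_2\in I$, $\phi_{\sigma_1}$ has a zero in $(\sigma_1,\tau(\sigma_2))$. The gap is the endpoint analysis, which is the real content of the proposition and which you yourself flag as the obstacle; it is missing or incorrect in three places.

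\emph{Nonemptiness.} You never show $I\neq\emptyset$ or $\sigma^*<b$. The paper gets both from the solution $\psi$ regular at $b$, built from a Volterra equation, with $\psi\to 1$ and $q\psi'\to 0$. The divergence $\int_a q^{-1}=\infty$ forces $\psi$ to have a last zero $\bar\sigma$. The weighted Wronskian of $\phi_\sigma$ with $\psi$, together with $\int^b q^{-1}=\infty$, then gives exactly $I=(a,\bar\sigma)$.

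\emph{Left end.} Your argument there is only a sign heuristic. Since $\tau(\sigma)$ stays bounded away from $a$, the trajectory $s_\sigma(\theta)$ can enter $\{H>0\}$, where the integrand of \eqref{eqn:log-phi-prime} is positive. You would have to show that the negative contribution near $\sigma$ diverges while the positive one stays bounded, and you do not. The paper's mechanism is that $q(\sigma)\to 0$ as $\sigma\to a$, which follows from the blow-up condition and monotonicity of $H$. Integrating $(q\phi_\sigma')'<0$ gives $\phi_\sigma(c)\le q(\sigma)\int_\sigma^c q^{-1}\to 0$. The Wronskian with the solution $\chi$ regular at $a$ gives $\phi_\sigma'(c)\to 0$. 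Continuous dependence on the fixed interval $[c,\tau(c)]$ then yields $\phi'_\sigma(\tau(\sigma))\to 0$.

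\emph{Right end.} This argument fails. Bounded terminal slopes do not imply that the limit $\phi_{\sigma^*}$ is decreasing. In fact your own computation (a negative value of $q\phi'$ plus $\int^b q^{-1}=\infty$ would drive $\phi$ to $-\infty$) proves that the positive solution $\phi_{\sigma^*}$ is \emph{increasing} on $(\sigma^*,b)$, whatever the slopes do. So no contradiction with boundedness arises.

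A direct attack is delicate. For $\sigma$ near $\sigma^*$, the Wronskian with $\psi$ gives $-\phi'_\sigma(\tau(\sigma))=q(\sigma)|\psi(\sigma)|/\bigl(q(\tau(\sigma))\psi(\tau(\sigma))\bigr)$, which is a $0/0$ form. The missing idea is reflection. With $(a,b)=(0,L)$, the coefficient $\hat H(r):=-H(L-r)$ is again increasing and satisfies \eqref{eqn:blowup-condition}. The function $v_\sigma(r):=\phi_\sigma(L-r)/(-\phi'_\sigma(\tau(\sigma)))$ solves the reflected shooting problem starting from $L-\tau(\sigma)\to 0$, with terminal slope $1/\phi'_\sigma(\tau(\sigma))$. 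Applying the left-end limit to $\hat H$ then gives $\phi'_\sigma(\tau(\sigma))\to-\infty$.

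Your separate case $\sigma^*=b$ (``the energy grows'') is likewise unsupported, but it cannot occur once $\bar\sigma<b$ is established.
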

\begin{proof}
By translating the independent variable, it suffices to consider $(a,b) = (0,L)$.
Let $q(s) := \exp ( - \int_{s_0}^s H(r) \, dr)$, so the equation in question takes the self-adjoint form
\begin{equation}\label{eqn:general-phi-ODE}
    (q \phi')' + \lambda q \phi = 0.
\end{equation}
The endpoint assumptions on $H$ yield $\int_0^{s_0} \frac{ds}{q(s)} = \infty$ and $\int_{s_0}^L \frac{ds}{q(s)} = \infty$. 
In particular $H(s) \to - \infty$ as $s \downarrow 0$ and $H(s) \to + \infty$ as $s \uparrow L$, because $H$ is strictly increasing.
Thus, $q$ is increasing near $0$ and decreasing near $L$, with $\lim_{s \to 0} q(s) = \lim_{s \to L} q(s) = 0$ because the integrals are infinite.

\smallskip \noindent \textbf{Step 1: Properties of a particular endpoint solution.}
We first consider the solution $\psi$ of equation~\eqref{eqn:general-phi-ODE} which is regular at $L$, given by a solving a Volterra equation with boundary behavior
\begin{equation}\label{eqn:volterra-operator}
\lim_{s \uparrow L} \psi(s) = 1, \qquad \lim_{s \uparrow L} q(s) \psi'(s) = 0, \qquad \psi(s) = 1 - \lambda \int_s^L q(r)^{-1} \int_r^L q(t) \psi(t) \, dt \, dr
\end{equation}
near $s$.
Since $m$ is decreasing near $L$, we have $\frac{1}{q(r)} \int_r^L q(t) \, dt \leq L-r$ by the monotonicity of $m$.
Hence, the associated Volterra operator of~\eqref{eqn:volterra-operator} is bounded by $O(\ve)$ in $C( [L-\ve,L])$, so it is a contraction for small $\ve>0$.
We therefore obtain a unique solution on $[L-\ve,L]$, which extends uniquely throughout $(0,L)$.
Moreover, $q(s) \psi'(s) = \lambda \int_s^L q(t) \psi(t) \, dt$ implies that $\psi,\psi'>0$ sufficiently close to $L$, and $\psi'(s) \to 0$ as $s \to L$ by the preceding bound.

If $\psi$ did not reach zero, it would be positive everywhere.
Thus, for any fixed $s_* \in (0,L)$, 
\[ \psi'(s) \geq A q(s)^{-1} \text{ for } 0<s<s_*, \qquad \text{ where } A:= \lambda \int_{s_*}^L q(t) \psi(t) dt > 0,
\]
contradicting $\psi>0$ due to $\int_0^{s_*} \frac{ds}{q(s)} = \infty$.
Thus, $\psi$ has a last zero $\bar{\sigma} \in (0,L)$ and $\psi, \psi'>0$ on $( \bar{\sigma}, L)$.

\smallskip \noindent \textbf{Step 2: Condition to reach a second zero and monotonicity of the slope.}
Next, we claim that any solution $\phi_{\sigma}$ in the statement reaches a second zero $\tau(\sigma)$ if and only if $\sigma < \bar{\sigma}$.
Indeed, equation~\eqref{eqn:general-phi-ODE} implies that the weighted Wronskian of $\phi_{\sigma}, \psi$ is equal to a constant $W$, with $(\frac{\phi_{\sigma}}{\psi})' = \frac{q(\sigma)\psi(\sigma)}{q\psi^2} > 0$ on $(\sigma, L)$, for $\sigma> \bar{\sigma}$.
Hence, no second zero exists.
If $\sigma=\bar\sigma$, then ODE uniqueness makes $\phi_{\sigma} = \frac{\psi}{\psi'(\bar{\sigma})}$, which also remains positive on $(\bar{\sigma},L)$.

Finally, let $\sigma < \bar{\sigma}$.
If $\phi_\sigma$ has a second zero in $(\sigma, \bar\sigma]$, there is nothing to prove. 
Otherwise, $\phi_\sigma(\bar\sigma) > 0$, and evaluating the Wronskian at $\bar\sigma$ gives $W = -q( \bar{\sigma}) \phi_{\sigma}(\bar{\sigma}) \psi'(\bar{\sigma}) <0$. 
Thus, $( \frac{\phi_{\sigma}}{\psi})' = \frac{W}{m \psi^2} < 0$ on $( \bar{\sigma}, L)$.
Moreover, since $\psi$ is bounded near $L$ and $\int_{s_0}^L \frac{ds}{q(s)} = \infty$, we find $\frac{\phi_\sigma}{\psi} \to - \infty$ as $s \uparrow L$, so $\phi_\sigma$ has a zero before $L$. 
Thus, the first zero after $\sigma$, which we denote by $\tau(\sigma)$, exists exactly when $\sigma< \bar\sigma$.

Moreover, applying Lemma~\ref{lemma:uniqueness-solution} to the solutions $\phi_\sigma$, with $\sigma \in (0,\bar{\sigma})$, we know that the terminal slope function $\sigma \mapsto - \phi'_{\sigma}( \tau(\sigma))$ is strictly increasing in $\sigma \in (0, \bar{\sigma})$.

\smallskip \noindent \textbf{Step 3: Asymptotics of the terminal slope.}
    Finally, we prove that 
    \[ \lim_{\sigma \downarrow 0} [ - \phi'_{\sigma} (\tau(\sigma)) ] = 0, \qquad \lim_{\sigma \uparrow \bar{\sigma}} [ - \phi'_{\sigma}(\tau(\sigma)) ] = \infty. \]
    Construct the regular solution $\chi$ at the left endpoint by the
    analogous Volterra equation
    \[
    \chi(s)=1-\lambda\int_0^s\frac{1}{q(r)}
    \int_0^r q(t)\chi(t)\,dt\,dr.
    \]
    It satisfies $\chi(0)=1$ and is positive near $0$.
    Choose $c>0$ small enough so that $2c<\bar\sigma$ and $\chi>0$ on $[0,2c]$.
    For $0<\sigma<c$, the Wronskian identity $( \frac{\phi_{\sigma}}{\chi})' = \frac{\tilde{W}}{q \chi^2}>0$ implies $\phi_\sigma>0$ on $(\sigma,2c]$, hence $2c<\tau(\sigma)<\tau(c)<L$.
    Integrating~\eqref{eqn:general-phi-ODE} in $\{ \phi > 0 \}$ gives
    \[
    0<\phi_\sigma(c)\leq q(\sigma)\int_\sigma^c q(r)^{-1} \, dr.
    \]
    This upper bound tends to zero: for a sufficiently small fixed $\delta<c$,
    the monotonicity of $m$ yields
    \[
    q(\sigma) \int_\sigma^\delta q(r)^{-1} \, dr \leq \delta - \sigma, \qquad q(\sigma) \int_{\delta}^c q(r)^{-1} \, dr \to 0,
    \]
    and then pass $\delta\downarrow 0$. 
    Thus, taking $\limsup_{\sigma \downarrow 0}$ for fixed $\delta$, and then sending $\delta \downarrow 0$, we deduce that $\varphi_{\sigma}(c) \to 0$ as $\sigma \downarrow 0$. 
    On the other hand, evaluating the Wronskian with $\chi$ at $c$ and $\sigma$ gives
    \[
    \phi_\sigma'(c)= \tfrac{\chi'(c)}{\chi(c)} \phi_\sigma(c) + \tfrac{q(\sigma)\chi(\sigma)}{q(c)\chi(c)} \longrightarrow 0.
    \]
    The continuous dependence of the ODE solution on Cauchy data over the fixed compact interval $[c,\tau(c)]$ implies that $\|\phi_\sigma\|_{C^1([c,\tau(c)])}\to0$.
    In particular $|\phi'_{\sigma} (\tau(\sigma))| \to 0$ as $\sigma \downarrow 0$.

    For the second claim, it is clear that $\tau(\sigma) \to L$ as $\sigma \uparrow \bar{\sigma}$: since the map $\sigma \mapsto \tau(\sigma)$ is increasing, the limit $\tilde{L} := \lim_{\sigma \uparrow \bar{\sigma}} \tau(\sigma)$ exists.
    Moreover, $\phi_{\bar{\sigma}}$ is a positive multiple of $\psi$ on $( \bar{\sigma}, L)$, and $\psi>0$ there.
    If $\tilde{L} < L$, then continuous dependence on compact subsets of $(0,L)$ would imply that $\phi_{\bar{\sigma}}(\tilde{L})= 0$, contradicting $\psi(\tilde{L}) > 0$.
    Thus, $\tau(\sigma) \to L$.
    Next, we reverse the independent variable to write $\hat{H}(r) := - H(L-r)$ for $r \in (0,L)$, so $\hat{H}'(r) = H'(L-r) > 0$.
    The two endpoint conditions~\eqref{eqn:blowup-condition} are preserved under this change of variables.
    We also let $\hat{\tau}(\sigma) := L - \tau(\sigma)$ and define a new function $v_{\sigma}(r) := \frac{\phi_{\sigma}(L-r)}{- \phi'_{\sigma}(\tau(\sigma))}$, which satisfies the modified equation
    \begin{equation}\label{eqn:v-new-ODE}
    v''_{\sigma} - \hat{H} v'_{\sigma} + \lambda v_{\sigma} = 0 \qquad v_{\sigma}(\hat{\tau}(\sigma)) = 0, \quad v'_{\sigma}(\hat{\tau}(\sigma))= 1.
    \end{equation}
    By construction $v_{\sigma}$ has a second zero at $L - \sigma$, where $v'_{\sigma} (L-\sigma) = \frac{1}{\phi'_{\sigma}(\tau(\sigma))} < 0$.
    Since $\hat{\tau}(\sigma) = L - \tau(\sigma) \to 0$, we may use the result of the first claim for the equation~\eqref{eqn:v-new-ODE} with coefficient $\hat{H}$.
    It follows that $v'_{\sigma} (L - \sigma) \to 0$ as $\sigma \uparrow \bar{\sigma}$, hence $\frac{1}{\phi'_{\sigma}(\tau(\sigma))} \to 0$ and $\phi'_{\sigma}(\tau(\sigma)) \to - \infty$ as desired.

    Combining the above discussion, we deduce that the function $\sigma \mapsto - \phi'_{\sigma}(\tau(\sigma))$ defines a continuous, strictly increasing map $(0, \bar{\sigma}) \to ( 0,\infty)$ with $\lim_{\sigma \downarrow 0} [ - \phi'_{\sigma}(\tau(\sigma)) ] = 0$ and $\lim_{\sigma \uparrow \bar{\sigma}} [ - \phi'_{\sigma}(\tau(\sigma)) ] = \infty$. 
    Thus, this map is a bijection, so there exists a unique $\sigma_* \in (0, \bar{\sigma})$ with $\phi'_{\sigma_*} ( \tau(\sigma_*)) = - 1$.
\end{proof}

\begin{corollary}
    \label{cor:existence-and-uniqueness}
For every isoparametric foliation $\{ M_s \}$ of $\bS^{n-1}$, there exists a unique solution $\phi$ to \eqref{eq:extremal} that is constant along leaves of the foliation such that $\{\phi>0\}$ has disconnected boundary.
\end{corollary}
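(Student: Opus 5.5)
The plan is to reduce the corollary to Proposition~\ref{prop:general-uniqueness-argument}. We take $(a,b) = (0,\frac{\pi}{g})$ and $H(s) = \frac{g}{2}\bigl(m_2\tan\frac{gs}{2} - m_1\cot\frac{gs}{2}\bigr)$ from \eqref{eqn:h-sigma}.

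First we reduce the problem to the ODE. An $\cF$-invariant solution $\phi$ of \eqref{eq:extremal} is a function of $s$ alone. Its positivity set is a connected union of leaves: for the first Dirichlet eigenfunction this follows from positivity on a connected domain, or from Faber--Krahn as in the introduction. It is an interval of $s$-values, and its spherical boundary is disconnected exactly when the interval is $(s_1,s_2)$ with $0<s_1<s_2<\frac{\pi}{g}$. If either endpoint were $0$ or $\frac{\pi}{g}$, the set would contain a focal submanifold and the boundary would be a single leaf, which is connected because regular leaves are connected. So we are in the Type II setting: $\phi$ solves \eqref{eq:ode-s} on $(s_1,s_2)$ with boundary conditions \eqref{eq:BC-2}. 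In particular $\phi=\phi_{s_1}$ in the notation of Proposition~\ref{prop:general-uniqueness-argument}, $s_2=\tau(s_1)$ is its first zero after $s_1$, and $\phi'_{s_1}(\tau(s_1))=-1$.

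Next we verify the hypotheses of the proposition. We have $H'(s)=\frac{g^2}{4}\bigl(m_2\sec^2\frac{gs}{2}+m_1\csc^2\frac{gs}{2}\bigr)>0$, since $m_1,m_2\ge1$; this holds for every $g$, including $g=1$. For the blow-up condition \eqref{eqn:blowup-condition}, note that $\exp\bigl(\int_{s_0}^s H\bigr)=C/v(s)$ with $v(s)=C(\sin\frac{gs}{2})^{m_1}(\cos\frac{gs}{2})^{m_2}$ by \eqref{eqn:h-sigma}. Near $s=0$ this behaves like $s^{-m_1}$, and near $s=\frac{\pi}{g}$ like $(\frac{\pi}{g}-s)^{-m_2}$. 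Both are non-integrable because $m_1,m_2\ge1$. The proposition then gives a unique $\sigma_*$ with the required property, with $\tau(\sigma_*)<\frac{\pi}{g}$ and $\phi_{\sigma_*}>0$ on $(\sigma_*,\tau(\sigma_*))$.

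Finally, existence and uniqueness. Setting $\phi:=\phi_{\sigma_*}$ on $(\sigma_*,\tau(\sigma_*))$ and $0$ elsewhere gives, through \eqref{eqn:laplacian-of-f}, a smooth solution of $-\Delta\phi=\lambda\phi$ on the band $\Omega=\bigcup_{\sigma_*<s<\tau(\sigma_*)}M_s$. It vanishes on $\partial\Omega$, and $\partial_\nu\phi=-1$ there because $|\phi'|=1$ at both ends with the correct orientations. Since $\phi>0$ in $\Omega$, it is the first Dirichlet eigenfunction, so $\lambda=\lambda_1(\Omega)$ and the domain is extremal. Uniqueness follows from the first step: any such solution equals $\phi_{s_1}$ with terminal slope $-1$, so $s_1=\sigma_*$ by the uniqueness in the proposition.

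The main point needing care is the first step: the positivity set must be a single interval strictly inside $(0,\frac{\pi}{g})$, and the normalization $\phi'(s_1)=1$ must come from the boundary condition rather than from a choice. The latter is immediate, since $\partial_\nu\phi=-1$ at the leaf $M_{s_1}$, where the outward normal is $-\partial_s$, gives $\phi'(s_1)=1$.
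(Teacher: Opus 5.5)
Your proposal is correct and follows the paper's proof. The paper likewise reduces the corollary to Proposition~\ref{prop:general-uniqueness-argument} by observing that $\exp\bigl(\int_{s_0}^s H\bigr) = C/v(s)$ is non-integrable at both endpoints because $m_1, m_2 \geq 1$. You additionally make explicit three steps the paper leaves implicit: the reduction to the Type II boundary conditions \eqref{eq:BC-2}, the check $H'>0$, and the verification that the resulting band is an extremal domain. One small caveat: the Faber--Krahn route to connectedness is specific to $\lambda = n-1$, so for general $\lambda$ you should rely on connectedness being part of the extremal-domain setup, which is the other justification you already cite.
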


\begin{proof}
By \ref{prop:general-uniqueness-argument}, it suffices to verify the endpoint condition~\eqref{eqn:blowup-condition}: since $v(s) = C( \sin \frac{gs}{2})^{m_1} ( \cos \frac{gs}{2})^{m_2}$ has $v(s) \asymp s^{m_1}$ near $s=0$ and $v(s) \asymp ( \frac{\pi}{g} - s)^{m_2}$ near $s = \frac{\pi}{g}$, we use $H = - \frac{v'}{v}$ to obtain
\[
\exp \Bigl( \int_{s_0}^s H(r) \, dr \Bigr) = \frac{C'}{v(s)}, \qquad \int_0 \frac{ds}{v(s)} = \int^{\frac{\pi}{g}} \frac{ds}{v(s)} = \infty
\]
due to $m_i \geq 1$.
Thus, there exists a unique profile $\phi_{\sigma_*}$ with $\phi'_{\sigma_*}(\sigma_*) = -\phi'_{\sigma_*}(\tau(\sigma_*)) = 1$.
\end{proof}

We finally combine these properties to complete the proof of Theorem~\ref{thm:unified-theorem}, showing that the geometry of the radial graph determines the homogeneous solution $U$.
\begin{proof}[Proof of Theorem~\ref{thm:unified-theorem}]
We define the radial projection map $\pi: \Sigma_U \ni x \mapsto \frac{x}{|x|} \to \bS^{n-1}$ on the radial graph.
Since $U$ is positive and one-homogeneous, $\pi$ is a diffeomorphism from $\Sigma_U$ onto the spherical positivity set $\Omega^S = \{ U > 0 \} \cap \bS^{n-1}$, with inverse $\pi^{-1}(\omega) = \frac{\omega}{U(\omega)}$.
By our assumption, $\Omega^S$ is foliated by the leaves of $M_s$ and the radius of the unique point of $\Sigma_U$ over $\omega$ is constant along every leaf.
Thus, $U(\omega)$ is constant along leaves of the foliation, so $U(\rho \omega) = \rho \phi (s(\omega))$ for some $\phi$.

By Corollary~\ref{cor:existence-and-uniqueness} and the discussions above, the solution is either one of the Type I profiles $U_{M_1}, U_{M_2}$, or it is the unique Type II solution $\bar{U}_M$.
For the Type I profiles, the maximum is achieved at the relevant focal manifold, and $\Sigma_U$ has one end.
For a Type II solution, the profile is strictly log-concave, due to Proposition~\ref{prop:general-uniqueness-argument}.
Therefore, it attains a unique interior maximum, along a regular leaf, and $\Sigma_U$ has two ends.
This establishes our geometric characterization.

The application to isoparametric foliations generated by cohomogeneity-one actions of $G \subset O(n)$ on $\bS^{n-1}$ is immediate: if $U(gx) = U(x)$ for all $g \in G$, then $G$-invariance makes the spherical profile descend to the orbit interval.
In particular, suppose $\textup{SO}(n-1) \subseteq \textup{Isom}(\Sigma_U)$. 
Then by Proposition \ref{prop:intrinsic-ambient-isometry}, we have $U(gx) = U(x)$ for all $g\in \textup{SO}(n-1)$. The only $\textup{SO}(n-1)$-invariant solutions are the flat half-space solution and the axisymmetric De Silva-Jerison cone.
This completes the proof.
\end{proof}

\section{Densities of homogeneous solutions and asymptotic analysis}\label{section:density-of-cones}

We now analyze the density of the isoparametric cones $U_{M_i}, \bar{U}_M$ constructed in Theorem~\ref{thm:isoparametric-foliation-one-phase-cone}.
Given the symmetry between the profiles of the cones $U_{M_i}$ and density computations, we study the cones $U_{M_1}$ and $\bar{U}_M$ and denote their densities by
\[
\Theta_{g,m_1,m_2} := \Theta ( U_{M_1}), \qquad \bar{\Theta}_{g,m_1,m_2} := \Theta (\bar{U}_M)
\]
for $(g,m_1,m_2)$ the parameters of the isoparametric foliation.
In particular, the axisymmetric De Silva-Jerison cone on $\bR^n$ is of Type II, in the sense of Theorem~\ref{thm:isoparametric-foliation-one-phase-cone}, with density $\bar{\Theta}_{1,n-2,n-2}$.

We first perform a change of variables that will be useful in the ensuing computations.
\begin{lemma}\label{lemma:density-cone-computation}
Let us consider an $\cF$-invariant solution of the shape optimization problem~\eqref{eq:homsph} with profile function $f(t)$ having positivity interval $I$.
    The corresponding homogeneous solution of the one-phase problem, given by
    \[
    U(x) := c \, |x| f \bigl( \sin \tfrac{gs(\omega)}{2} \bigr), \quad \text{when } g \geq 2, \quad \text{or} \quad U(x) := c \, |x| f (\cos s(\omega)), \quad \text{when } g=1, 
    \]
    has density $\Theta(U)$ given by
    \begin{equation}\label{eqn:density-of-cone}
    \begin{split}
        \Theta(U) &= \frac{\int_I t^{m_1} (1-t^2)^{\frac{m_2-1}{2}} \, dt}{\int_0^1 t^{m_1}(1-t^2)^{\frac{m_2-1}{2}} \, dt} \quad \text{if } \; g \geq 2, \qquad \Theta(U) = \frac{\int_I (1-t^2)^{\frac{n-3}{2}} \, dt}{\int_{-1}^1 (1-t^2)^{\frac{n-3}{2}} \, dt} \quad \text{if } \; g=1.
    \end{split}
    \end{equation}
\end{lemma}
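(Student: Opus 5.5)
The density is $\Theta(U) = \mathcal{H}^{n-1}(\Omega^S)/\mathcal{H}^{n-1}(\bS^{n-1})$ with $\Omega^S = \{U>0\}\cap\bS^{n-1}$. Since $U$ is $\cF$-invariant, $\Omega^S$ is a union of leaves $M_s$ with $s$ ranging over the preimage of $I$. I will compute both areas with the coarea formula along the normal parameter $s$. Because $|\nabla s| = 1$ on the sphere away from the measure-zero focal submanifolds, this gives
\[
\mathcal{H}^{n-1}(\Omega^S) = \int_{\{s :\, f(t(s))>0\}} \mathcal{H}^{n-2}(M_s)\, ds = \int_{\{s:\, f(t(s))>0\}} v(s)\, ds,
\]
and similarly for $\bS^{n-1}$ with $s \in [0,\frac{\pi}{g}]$. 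Here I use \eqref{eqn:isoparametric-volume-density}, $v(s) = C(\sin\frac{gs}{2})^{m_1}(\cos\frac{gs}{2})^{m_2}$. The constant $C$ is common to numerator and denominator, so it cancels in the ratio; this is why only the density formula, and not the absolute value of $C$, is needed.

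For $g\ge 2$, I substitute $t = \sin\frac{gs}{2}$. This is a diffeomorphism $(0,\frac{\pi}{g}) \to (0,1)$ with $dt = \frac{g}{2}\cos\frac{gs}{2}\, ds$, so
\[
v(s)\,ds = C\, t^{m_1}(1-t^2)^{\frac{m_2}{2}} \cdot \frac{2}{g}(1-t^2)^{-\frac12}\, dt = \tfrac{2C}{g}\, t^{m_1}(1-t^2)^{\frac{m_2-1}{2}}\, dt.
\]
The positivity set in $s$ maps exactly onto $I\subset[0,1]$, because the constant $c>0$ and the factor $|x|$ do not affect signs. The full sphere corresponds to $t\in[0,1]$. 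Taking the ratio gives the first formula in \eqref{eqn:density-of-cone}.

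For $g=1$ the leaves are round spheres of radius $\sin s$, so $v(s) = C\sin^{n-2}s$; this agrees with the general formula after choosing $m_1=m_2=n-2$ and noting $\sin\frac{s}{2}\cos\frac{s}{2} = \frac12\sin s$. I substitute $t=\cos s$, mapping $(0,\pi)\to(-1,1)$ with $|dt| = \sin s\,ds$. Then $v\,ds = C(1-t^2)^{\frac{n-3}{2}}\,dt$, the positivity set maps onto $I\subset[-1,1]$, and the whole sphere maps onto $[-1,1]$. This gives the second formula.

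The only real care point is the coarea step: justifying $\mathcal{H}^{n-2}(M_s) = v(s)$ with a single constant $C$ independent of $s$, and checking that the focal submanifolds have measure zero. Both follow from the normal parallel map $\Phi_s$ in \eqref{eqn:Phi-s(x)}. Its Jacobian is the product over principal distributions of $\frac{\sin(s+(j-1)\pi/g)}{\sin(s_0+(j-1)\pi/g)}$, raised to the multiplicities $m_j$, and this product equals $(\sin\frac{gs}{2})^{m_1}(\cos\frac{gs}{2})^{m_2}$ up to a constant. The focal submanifolds have codimension $m_i+1\ge 2$, so they are null sets.
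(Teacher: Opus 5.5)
Your proposal is correct and follows the paper's proof: integrate the leaf volume density $v(s)$ from \eqref{eqn:isoparametric-volume-density} over the positivity set, change variables to $t=\sin\frac{gs}{2}$ (or $t=\cos s$ when $g=1$), and let the constant $C$ cancel in the ratio. Your coarea justification, the Jacobian check via the normal parallel map $\Phi_s$, and the remark that the focal submanifolds are null sets are correct additions; they fill in details the paper simply cites from the preliminaries.
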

\begin{proof}
    For $g \geq 2$, the map $s \mapsto t := \sin \frac{gs}{2}$ is strictly increasing in $s \in [0,\frac{\pi}{g}]$, so the positive phase of the cone $U$ on the sphere is precisely $\{ \omega \in \bS^{n-1} : t(\omega) \in I \}$.
    The spherical density along the foliation is given by~\eqref{eqn:isoparametric-volume-density}, namely 
    \[
    d \cH^{n-1} = C (\sin \tfrac{gs}{2})^{m_1} ( \cos \tfrac{gs}{2})^{m_2} \, ds = \tfrac{2C}{g} t^{m_1}(1-t^2)^{\frac{m_2-1}{2}} \, dt.
    \]
    The expression~\eqref{eqn:density-of-cone} follows from integrating this measure along the positivity interval $I$.
    For $g=1$, the map $s \mapsto t := \cos s$ is strictly decreasing in $s \in [0,\pi]$ and the induced measure is $d \cH^{n-1} = C (\sin s)^{n-2} \, ds = C (1-t^2)^{\frac{n-3}{2}} \, dt$ with $t = \cos s$, leading to the second expression.
\end{proof}

\begin{proposition}\label{proposition:axisymmetric-density}
    The density of the axisymmetric De Silva-Jerison cone satisfies the asymptotics
    \begin{equation}\label{eq:density-DSJ-asymp}
         \lim_{n \to \infty} \bar{\Theta}_{1,n-2,n-2} = \frac{\int_0^c e^{- \frac{x^2}{2}} \, dx}{\int_0^{\infty} e^{- \frac{x^2}{2}} \, dx} = \textup{erf} \Bigl( \frac{c}{\sqrt{2}} \Bigr) \simeq 0.80876, \qquad c \simeq 1.3069,
    \end{equation}
    where $c$ is the unique positive zero of the function ${}_1F_1( - \frac{1}{2}; \frac{1}{2} ; \frac{x^2}{2})$ and $\textup{erf}(t) := \frac{2}{\sqrt{\pi}} \int_0^t e^{-x^2} \, dx$.

    More generally, for $m_1$ kept fixed and $m_2 \to \infty$, we have
    \[
    \lim_{m_2 \to \infty} \Theta_{2,m_1,m_2} = \frac{1}{\Gamma( \frac{m_1+1}{2})} \int_0^{\rho_{m_1}} y^{\frac{m_1-1}{2}} e^{-y} \, dy, \qquad \text{where } \; {}_1 F_1 \Bigl( - \frac{1}{2}, \frac{m_1+1}{2} ; \rho_{m_1} \Bigr) = 0.
    \]
    Finally, for $m_2$ kept fixed and $m_1 \to \infty$, the limiting density is
    \[
    \lim_{m_1 \to \infty} \Theta_{2,m_1,m_2} = \frac{1}{\Gamma( \frac{m_2+1}{2})} \int_{\sigma_{m_2}}^{\infty} y^{\frac{m_2-1}{2}} e^{-y} \, dy, \qquad \text{where } \; U \Bigl( - \frac{1}{2}, \frac{m_2+1}{2} ; \sigma_{m_2} \Bigr) = 0.
    \]
    Here, ${}_1F_1(a,b;y)$ and $U(a,b;y)$ denote, respectively, the regular Kummer solution and the Tricomi solution of the confluent hypergeometric equation $y F'' + (b-y) F' - a F= 0$, with asymptotic behaviors ${}_1F_1(a,b;y) = 1 + O(y)$ as $y \to 0$ and $U(a,b;y) \sim y^{-a}$ as $y \to \infty$.
\end{proposition}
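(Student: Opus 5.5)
The strategy is a Gaussian / confluent rescaling of the profile ODE near the relevant focal point, combined with the density formula of Lemma~\ref{lemma:density-cone-computation}. Everything is reduced to three things: the limit of the positivity interval, the limit of the weight, and the limit of the normalizing integral.

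\textbf{Axisymmetric case.} For $g=1$ and $\lambda=n-1$, the profile solves
\[
(1-t^2)f''-(n-1)tf'+(n-1)f=0,
\]
and it is the even singular solution with positivity set $(-t_n,t_n)$. I will substitute $t=x/\sqrt{n}$ and write $F_n(x):=f(x/\sqrt n)$. Dividing by $n$ gives
\[
\Bigl(1-\tfrac{x^2}{n}\Bigr)F_n''-\tfrac{n-1}{n}\,xF_n'+\tfrac{n-1}{n}F_n=0 .
\]
This converges locally uniformly, as an analytic-coefficient ODE, to the Hermite-type equation $F''-xF'+F=0$. Its even solution normalized by $F(0)=1$ is ${}_1F_1(-\tfrac12;\tfrac12;\tfrac{x^2}{2})$. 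Since $f$ is even, I can normalize $f(0)=1$ and $f'(0)=0$. Continuous dependence on parameters then gives $F_n\to F$ in $C^1_{\mathrm{loc}}$.

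The limit $F$ has a simple first positive zero $c$. I will check that $F''=xF'-F<0$ near $c$ and that $F'(c)\neq0$ by ODE uniqueness. Hence $\sqrt n\,t_n\to c$.

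For the weights, $(1-x^2/n)^{(n-3)/2}\to e^{-x^2/2}$ locally uniformly, and the factor $\frac{1}{\sqrt n}$ from $dt$ cancels between numerator and denominator. In the denominator I need dominated convergence on all of $\bR$. I will use $(1-x^2/n)^{(n-3)/2}\le e^{-x^2(n-3)/(2n)}$, which is integrable uniformly for $n\ge 4$. This yields the stated erf limit.

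\textbf{Clifford case, $m_1$ fixed and $m_2\to\infty$.} Here $g=2$ and $\lambda=n-1=m_1+m_2+1$, so $4\lambda/g^2=m_1+m_2+1$. Equation \eqref{eqn:self-adjoint-legendre} becomes
\[
(1-t^2)f''+\Bigl(\tfrac{m_1}{t}-(m_1+m_2+1)t\Bigr)f'+(m_1+m_2+1)f=0 .
\]
I will set $y=m_2t^2/2$, or more cleanly $y=(m_1+m_2+1)t^2/2$. Writing $f(t)=G(y)$, one computes $tf'=2yG_y$ and $f''=(m_1+m_2+1)(G_y+2yG_{yy})$ up to the factor convention. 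Dividing by $m_2$, the limit equation is
\[
yG''+\Bigl(\tfrac{m_1+1}{2}-y\Bigr)G'+\tfrac12 G=0,
\]
which is Kummer's equation with $a=-\tfrac12$ and $b=\tfrac{m_1+1}{2}$. The regular solution ${}_1F_1(-\tfrac12,\tfrac{m_1+1}{2};y)$ is the limit of $f_{M_1}$, by convergence of the hypergeometric series termwise in $y$ (or by Volterra-equation stability at the regular singular point $0$). So the zero satisfies $y$-zero $\to\rho_{m_1}$.

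For the density, $t^{m_1}(1-t^2)^{(m_2-1)/2}\,dt$ becomes, up to constants, $y^{(m_1-1)/2}(1-2y/N)^{(m_2-1)/2}\,dy\to y^{(m_1-1)/2}e^{-y}\,dy$. The normalizer tends to $\Gamma(\tfrac{m_1+1}{2})$, again by dominated convergence using $1-u\le e^{-u}$.

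\textbf{Clifford case, $m_1\to\infty$ with $m_2$ fixed.} I will use the symmetric variable near $t=1$, namely $y=N(1-t^2)/2$. Now the positive phase is $[t_{M_1},1)$ in the complementary variable and contains the far endpoint. The rescaled limit equation is Kummer's equation with $b=\tfrac{m_2+1}{2}$, but the relevant solution is no longer the one regular at $y=0$. It is the limit of the solution singular at the $M_2$ side, which in $y$ stays subexponential as $y\to\infty$, corresponding to $t\to0$ where $f_{M_1}$ is regular. That identifies the Tricomi function $U(-\tfrac12,\tfrac{m_2+1}{2};y)\sim y^{1/2}$. The positive region becomes $y>\sigma_{m_2}$ and the weight becomes $y^{(m_2-1)/2}e^{-y}$, which gives the stated formula.

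\textbf{Main obstacle.} The hardest step is justifying the identification of the limit solution in this last case. The convergence there is not at a regular point: the normalization is imposed at $t=0$, i.e.\ $y=N/2\to\infty$. I would try to show first that the rescaled $f_{M_1}$, normalized to have fixed value at $y=1$, has growth at most polynomial in $y$ on $[1,\varepsilon N]$, via a Riccati/log-derivative comparison. Any limit then excludes the exponentially growing ${}_1F_1$ component and must be a multiple of $U$. Alternatively, I would use known uniform asymptotics of ${}_2F_1$ with large parameters (the Temme / Olde Daalhuis expansions), which yield the Tricomi limit directly.
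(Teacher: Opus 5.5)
Your axisymmetric case and the regime $m_1$ fixed, $m_2\to\infty$ follow the paper's proof. The rescalings are the same: using $\sqrt n$ and $y=\tfrac N2t^2$ instead of $\sqrt{n-1}$ and $\tfrac{m_2}{2}t^2$ changes nothing. So are the confluent limits, the convergence of the first zero via simplicity, and the dominated convergence for the weights, where you are more explicit than the paper about the majorants. You should still say why the limit profiles vanish at all. In the first case, $F'''=xF''$ gives $F''=-e^{x^2/2}$, so $c$ exists and is unique, as the statement asserts. In the second, ${}_1F_1(-\tfrac12;b;y)\sim\tfrac{\Gamma(b)}{\Gamma(-1/2)}e^{y}y^{-1/2-b}\to-\infty$ produces $\rho_{m_1}$.

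The real difference is the regime $m_1\to\infty$.

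\emph{The paper's route.} The paper does not estimate growth. It quotes the confluence of the Gauss function to the Tricomi function (connection formula at $t^2=1$, termwise limits, Gamma-ratio asymptotics), which gives $\sqrt{\tfrac{m_1+1}{2}}\,K_{m_1}\to U(-\tfrac12;\tfrac{m_2+1}{2};\cdot)$ with the exact constant. It then gets a unique $\sigma_{m_2}$ from $\partial_yU(-\tfrac12;b;y)=\tfrac12U(\tfrac12;b+1;y)>0$. This is your fallback option.

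\emph{Your primary route.} It identifies the limit only up to a scalar, which is all you need to locate the zero. In exchange it is self-contained and uniform in $m_2$. The connection-formula route must treat odd $m_2$ as a logarithmic limiting case, since there the exponents $0$ and $\tfrac{1-m_2}{2}$ at $t^2=1$ differ by an integer.

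\emph{A correction your route needs.} Do not normalize at $y=1$. The zero $\sigma_{m_2}$ exceeds $1$ for large $m_2$, and $K'/K$ blows up at the zero, so the log-derivative comparison cannot run across $[1,\varepsilon N]$. Instead, set $y=\tfrac N2(1-t^2)$, $N=m_1+m_2+1$, $b=\tfrac{m_2+1}{2}$, and $K_N(y)=f_{M_1}(t)$. Note that $K_N$ is increasing in $y$, because all non-constant Taylor coefficients of $f_{M_1}$ are negative. With $L=K_N'/K_N$,
\[
L'=\frac{\bigl(\tfrac{N+1}{N}y-b\bigr)L-\tfrac12}{y\bigl(1-\tfrac{2y}{N}\bigr)}-L^2 ,
\]
so $L'+2y^{-2}>0$ wherever $L=2/y$ and $y>\max(4,m_2+1)$. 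Since $L(\tfrac N2)=\tfrac{1}{m_1+1}<\tfrac4N$, a first-crossing argument run downward from $y=\tfrac N2$ gives $0<L<2/y$ on the positivity set above $\max(4,m_2+1)$.

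\emph{Completing the argument.} Set $y_0:=m_2+5$.
\begin{enumerate}
\item The bound forces the zero $y_N$ of $K_N$ below $y_0$, because $L\to+\infty$ at a zero. It also gives $K_N(y)\le (y/y_0)^2K_N(y_0)$ on $[y_0,\tfrac N2]$.
\item Normalize at $y_0$. The data $(1,L(y_0))$ are bounded, so subsequential $C^\infty_{\mathrm{loc}}(0,\infty)$ limits exist and solve Kummer's equation with at most quadratic growth. Hence they are multiples of $U(-\tfrac12;b;\cdot)$.
\item Monotonicity of $K_N$, together with the simple zero of $U$ (which uses $U\to-\infty$ as $y\downarrow0$), then gives $y_N\to\sigma_{m_2}$.
\end{enumerate}
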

\begin{proof}
    We follow the computations in~\cite{FTW-stability-one-phase}*{\S 4.1}.
    The axisymmetric cone has profile function defined on the interval $[-t_n, t_n]$, where $t_n$ is the zero of the function $F_n(t) := {}_2F_1( \frac{n-1}{2} , - \frac{1}{2} ; \frac{1}{2} ; t^2)$.
    Let $G_n(x) := F_n( \frac{x}{\sqrt{n-1}})$, so the hypergeometric series give the convergence $G_n \to G_{\infty}$ in $C^{\infty}_{\text{loc}}$, where the limiting function $G_{\infty} = {}_1 F_1 ( - \frac{1}{2} ; \frac{1}{2} ; \frac{x^2}{2})$ is the solution of $G''_{\infty} - x G'_{\infty} + G_{\infty} = 0$ with $(G_{\infty}(0), G'_{\infty}(0)) = (1,0)$.
    We find that $G''_{\infty} (x) = - \exp ( \frac{x^2}{2})$, so $G_{\infty}$ is concave and has a unique zero $c$, which satisfies ${}_1F_1 ( -\frac{1}{2} ; \frac{1}{2} ; \frac{c^2}{2}) =0$.
    Thus, $\sqrt{n-1} \, t_n \to c$, and $x = \sqrt{n-1} \, t$ produces the $C^{\infty}_{\text{loc}}$ limit $(1-t^2)^{\frac{n-3}{2}} = \bigl( 1 - \tfrac{x^2}{n-1} \bigr)^{\frac{n-3}{2}} \xrightarrow{C^{\infty}_{\textup{loc}}} e^{- \frac{x^2}{2}}$.
    Applying this relation to the integral~\eqref{eqn:density-of-cone} and sending $n \to \infty$ proves the claimed equality.

    To study the limits as $m_1 \to \infty$ or $m_2 \to \infty$, write $F_{m_1, m_2}(t) := {}_2F_1 ( \frac{m_1+m_2+1}{2} , - \frac{1}{2} ; \frac{m_1+1}{2} ; t^2)$ and denote the first zero of $F_{m_1,m_2}$ as $t_{m_1,m_2}$.
    For $m_1$ fixed, we set $H_{m_2}(y) := F_{m_1, m_2} ( \sqrt{\frac{2y}{m_2}})$, so the hypergeometric coefficients in the expansion of $H_{m_2}(y)$ satisfy $( \frac{m_1+m_2+1}{2})_k ( \frac{2}{m_2})^k \to 1$.
    This implies
    \[
    H_{m_2}(y) = \sum_{k=0}^{\infty} \frac{(-\frac{1}{2})_k}{( \frac{m_1+1}{2})_k k!} \Bigl( \frac{m_1+m_2+1}{2} \Bigr)_k \Bigl( \frac{2y}{m_2} \Bigr)^k \to H_{\infty}(y) =: {}_1F_1 \Bigl( - \frac{1}{2} ; \frac{m_1+1}{2}; y \Bigr)
    \]
    in $C^{\infty}_{\textup{loc}}( [0,\infty))$.
    Let $\rho_{m_1}$ be the first positive zero of $H_{\infty}$, which is simple, so $\frac{m_2}{2} t^2_{m_1, m_2} \to \rho_{m_1}$ due to the above locally uniform convergence.
    With the change of variables $y=\frac{m_2}{2}t^2$, we obtain $t^{m_1}(1-t^2)^{\frac{m_2-1}{2}} \, dt = y^{\frac{m_1-1}{2}} (1- \frac{2y}{m_2})^{\frac{m_2-1}{2}} dy$ up to a constant factor that cancels between the numerator and the denominator.
    Moreover, $(1 - \frac{2y}{m_2})^{\frac{m_2-1}{2}} \to e^{-y}$ on compact sets as $m_2 \to \infty$.
    Thus, applying the dominated convergence theorem to the density formula~\eqref{eqn:density-of-cone}, we obtain 
    \[ 
    \lim_{m_2 \to \infty}\Theta_{2,m_1,m_2} =\lim_{m_2 \to \infty}\frac{ \displaystyle{ \int_0^{\frac{m_2}{2} t^2_{m_1,m_2}}} y^{\frac{m_1-1}{2}} \bigl(1-\tfrac{2y}{m_2} \bigr)^{\frac{m_2-1}{2}}\,dy}{ \displaystyle{\int_0^{\frac{m_2}{2}}}y^{\frac{m_1-1}{2}}\bigl(1-\tfrac{2y}{m_2}\bigr)^{\frac{m_2-1}{2}}\,dy} = \frac{1}{\Gamma(\frac{m_1+1}{2})}\int_0^{\rho_{m_1}}y^{\frac{m_1-1}{2}}e^{-y}\,dy
    \]
    as claimed.
    For the second limit, we set $y = \frac{m_1+1}{2}(1-t^2)$ and define $K_{m_1}(y) := F_{m_1, m_2} ( \sqrt{1 - \frac{2y}{m_1+1}})$.
    By the standard confluence of the Gauss hypergeometric function to the Tricomi function (see~\cite{dtmf} 15.10.21, 16.8.10, 5.11.12, and 13.2.42), we have 
    \[
    \textstyle{\sqrt{\tfrac{m_1+1}{2}}} \, K_{m_1} \to U \bigl(- \tfrac{1}{2}; \tfrac{m_2+1}{2} ; y \bigr) \qquad \text{in } \quad C^{\infty}_{\textup{loc}}(0,\infty), \qquad \text{as } \; m_1 \to \infty.
    \]
    All the non-constant coefficients defining $F_{m_1,m_2}$ are negative, so this function is strictly decreasing on $(0,1)$, and $K_{m_1}$ is strictly increasing.
    Moreover, $ \frac{d}{dy} U ( - \frac{1}{2}; \frac{m_2+1}{2} ;y) = \frac{1}{2} U(\frac{1}{2}; \frac{m_2+1}{2}+1;y)>0$ and
    \[
   U( - \tfrac{1}{2}; \tfrac{m_2+1}{2} ; y) \to - \infty \quad \text{as } \; y \downarrow 0, \qquad U ( - \tfrac{1}{2} ; \tfrac{m_2+1}{2} ; y) \sim y^{\frac{1}{2}} \quad \text{as } \; y \to \infty,
    \]
    so there is a unique positive zero $\sigma_{m_2}$,which is simple.
    Therefore,
    \[
    \tfrac{m_1+1}{2} (1 - t^2_{m_1,m_2}) \to \sigma_{m_2}, \qquad t^{m_1} (1-t^2)^{\frac{m_2-1}{2}} \, dt = y^{\frac{m_2-1}{2}} (1 - \tfrac{2y}{m_1+1})^{\frac{m_1-1}{2}} \, dy,
    \]
    up to a constant factor that cancels between the numerator and denominator.
    Since $(1 - \frac{2y}{m_1+1})^{\frac{m_1-1}{2}} \to e^{-y}$ locally uniformly, we may again apply dominated convergence to obtain
    \[   
    \lim_{m_1 \to \infty} \Theta_{2,m_1,m_2}= \lim_{m_1 \to \infty} \frac{ \displaystyle{\int_{\frac{m_1+1}{2} (1 - t^2_{m_1,m_2})}^{\frac{m_1+1}{2}}}y^{\frac{m_2-1}{2}} \bigl(1-\tfrac{2y}{m_1+1} \bigr)^{\frac{m_1-1}{2}}\,dy}{ \displaystyle{\int_0^{\frac{m_1+1}{2}}} y^{\frac{m_2-1}{2}} \bigl(1-\tfrac{2y}{m_1+1} \bigr)^{\frac{m_1-1}{2}}\,dy} =\frac{1}{\Gamma(\frac{m_2+1}{2})}\int_{\sigma_{m_2}}^\infty y^{\frac{m_2-1}{2}}e^{-y}\,dy.
    \]
    This completes the proof.
\end{proof}

On the other hand, when both $m_1, m_2 \to \infty$, we have the following explicit limit.
\begin{proposition}\label{prop:m1,m2-to-infty}
    As $m_1, m_2 \to \infty$, the limiting density satisfies
    \[
    \lim_{m_1, m_2 \to \infty} \Theta_{2,m_1,m_2} = \Phi(z_*) \simeq 0.7779, \qquad z_* \simeq 0.7650.
    \]
    Here, $z_* > 0$ is the first zero of the parabolic cylinder function $D_{1/2}(-z)$, and $\Phi(z)$ is the standard Gaussian cumulative distribution function.
\end{proposition}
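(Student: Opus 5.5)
By Lemma~\ref{lemma:density-cone-computation} with $g=2$, the density is
\[
\Theta_{2,m_1,m_2} = \frac{\int_0^{t_{m_1,m_2}} t^{m_1}(1-t^2)^{\frac{m_2-1}{2}}\,dt}{\int_0^1 t^{m_1}(1-t^2)^{\frac{m_2-1}{2}}\,dt}, \qquad F_{m_1,m_2}(t) := {}_2F_1\bigl(\tfrac{m_1+m_2+1}{2}, -\tfrac12; \tfrac{m_1+1}{2}; t^2\bigr),
\]
where $t_{m_1,m_2}$ is the first zero of $F_{m_1,m_2}$. We use the same variables as in the proof of Proposition~\ref{proposition:axisymmetric-density}. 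When both multiplicities grow, the weight $w(t)=t^{m_1}(1-t^2)^{(m_2-1)/2}$ concentrates near its maximum $t_0^2 = \frac{m_1}{m_1+m_2-1}$. Its width there is of order $(m_1+m_2)^{-1/2}$ in the natural variable $u = t^2$. Accordingly we set
\[
p := \frac{m_1}{m_1+m_2}, \qquad N := m_1+m_2, \qquad u = t^2 = p + \sqrt{\tfrac{p(1-p)}{N}}\, z.
\]
This is the central limit regime: $u$ is Beta$(\frac{m_1+1}{2},\frac{m_2+1}{2})$-distributed under the normalized measure, with variance $\approx \frac{2p(1-p)}{N}$. The factor $\sqrt2$ should be absorbed by rescaling, so that $z$ becomes a standard Gaussian. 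First we show, by Laplace's method or the Beta CLT, that the pushforward of the normalized weight to $z$ converges to the standard normal density. Then $\Theta_{2,m_1,m_2} = \Pr(z \le z_{m_1,m_2}) + o(1)$, where $z_{m_1,m_2}$ is the rescaled zero of $F_{m_1,m_2}$. If we pass to a subsequence along which $p \to p_\infty \in [0,1]$, the endpoint cases $p_\infty \in\{0,1\}$ need care, and we handle them by the same CLT with uniform Beta tail bounds.

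The heart of the proof is the limit of the hypergeometric profile in this scaling. Let $G(z) := F_{m_1,m_2}(t(z))$, suitably normalized by a constant. The Gauss equation in $u$ reads
\[
u(1-u)F'' + \bigl(\tfrac{m_1+1}{2} - \tfrac{N+3}{2}u\bigr)F' + \tfrac{N+1}{4}F = 0,
\]
with $\frac{d}{du}$ derivatives. Substitute $u = p + \sigma z$ with $\sigma = \sqrt{2p(1-p)/N}$ and divide by $\frac{N}{4}$. To leading order the drift coefficient becomes $-2z$ times a factor, and the equation converges to
\[
G'' - zG' + \tfrac12 G = 0
\]
after rescaling $z$ to the standard Gaussian variable. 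This is the Hermite/parabolic cylinder equation with index $\nu = \frac12$, consistent with $\nu=\frac1g$ at $\lambda=n-1$. Its solutions are $e^{z^2/4}D_{1/2}(\pm z)$. Next we identify which solution appears. The regular solution at $t=0$ lies far to the left ($z \to -\infty$) in these coordinates. The normalized limit must be the solution with subexponential, hence polynomial, growth as $z\to-\infty$; this follows from Step~1 of Proposition~\ref{prop:general-uniqueness-argument} (uniqueness of the solution regular at an endpoint) together with the fact that the other solution grows like $e^{z^2/2}$. That solution is $e^{z^2/4}D_{1/2}(-z)$, which behaves like $|z|^{1/2}$ as $z \to -\infty$.

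We prove the convergence $G \to e^{z^2/4}D_{1/2}(-z)$ in $C^\infty_{\mathrm{loc}}$. We would try the Volterra/Wronskian comparison from the proof of Proposition~\ref{prop:general-uniqueness-argument}. Alternatively, we would use the known uniform asymptotics of ${}_2F_1$ with large parameters in terms of parabolic cylinder functions, as in \cite{dtmf}~\S 15.12. Suppose $D_{1/2}(-z)$ has a simple first zero $z_*$, with $G$ positive before it. Hurwitz-type convergence of simple zeros then gives $z_{m_1,m_2} \to z_*$. We also need that $G$ has no earlier zeros far to the left, uniformly; this should follow from the log-concavity and monotonicity of $F_{m_1,m_2}$, since all non-constant coefficients are negative. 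Combining the two steps gives $\Theta_{2,m_1,m_2} \to \Phi(z_*)$, and a numerical evaluation then yields $z_*\simeq 0.7650$ and $\Phi(z_*)\simeq 0.7779$.

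The main obstacle is the uniform control needed to exchange limits. The ODE converges only locally in $z$, while the selection of the correct solution is a condition at $z=-\infty$ (equivalently $t=0$). We would handle this with a barrier or energy argument showing that the regular solution stays polynomially bounded on $z \in [-R_N, 0]$, where $R_N \to \infty$. A secondary difficulty is making the limit independent of how $m_1/m_2$ behaves. We expect the limiting equation not to depend on $p_\infty$, which is what makes the limit universal, but this must be checked carefully.
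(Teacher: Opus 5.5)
Your outline follows the paper's route. You recentre at the peak of the weight (you use $u=t^2$, the paper uses $t$), derive the limit equation $G''-zG'+\tfrac12 G=0$, select $e^{z^2/4}D_{1/2}(-z)$, pass the zero to the limit, and read off $\Phi(z_*)$. Your Beta central-limit treatment of the weight, using uniform convergence of distribution functions, handles the tails more cleanly than the paper's local-convergence statement. One harmless slip: the drift term in the Gauss equation is $\frac{N+2}{2}u$, not $\frac{N+3}{2}u$.

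The gap is the step you flag as the main obstacle, and neither suggested fix closes it. Step~1 of Proposition~\ref{prop:general-uniqueness-argument} concerns one fixed equation at the endpoint $t=0$, which in your coordinates runs off to $z=-\infty$. It gives no control on $[z_L,-R]$ that is uniform in $(m_1,m_2)$. Without such control, locally convergent normalized profiles could tend to any combination $\alpha\,e^{z^2/4}D_{1/2}(-z)+\beta\,e^{z^2/4}D_{1/2}(z)$. The barrier/energy argument and the appeal to uniform ${}_2F_1$ asymptotics are never specified.

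The missing idea, which is what the paper uses, is the fact you mention only in passing: every non-constant Taylor coefficient of $F_{m_1,m_2}$ in $u$ is negative. So $F_{m_1,m_2}$ is strictly decreasing and concave in $u$, hence in the affine variable $z$. Let $G_m$ be $F_{m_1,m_2}$ written in $z$, divided by the Euclidean norm of its Cauchy data $(F_{m_1,m_2},\partial_z F_{m_1,m_2})$ at $z=0$. By continuous dependence, a subsequence converges in $C^\infty_{\mathrm{loc}}(\bR)$ to a nonzero solution $G$ of the limit equation. This $G$ is non-increasing and concave on all of $\bR$.

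For $z\le 0$, concavity gives the linear upper bound $G(z)\le G(0)+G'(0)z$, and monotonicity gives $G(z)\ge G(0)$. Whichever sign $\beta$ has, the $e^{z^2/2}|z|^{-3/2}$ growth of $e^{z^2/4}D_{1/2}(z)$ as $z\to-\infty$ breaks one of these bounds, so $\beta=0$. Monotonicity then forces $\alpha>0$. The limit is therefore unique, and the whole sequence converges.

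Strict monotonicity also gives each $F_{m_1,m_2}$ exactly one zero in $(0,1)$, so your concern about earlier zeros disappears. The zero then converges to the simple zero $z_*$. Even log-concavity alone would suffice, since it only allows $e^{C|z|}$ growth.
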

\begin{proof}
We again work in the variable $t = \sin \frac{g s(\omega)}{2}$, for $g \geq 2$, or $t = \cos s(\omega)$, for $g=1$.
Writing $q(t) := t^{m_1} (1-t^2)^{\frac{m_2-1}{2}}$, we can transform the equation~\eqref{eqn:laplacian-of-f} into a self-adjoint expression satisfied by the hypergeometric function, given by
    \[
    \bigl( (1-t^2) q(t) F'(t) \bigr)' + (n-1) q(t) F(t) = 0, \qquad F(t) := {}_2 F_1 \bigl( \tfrac{n-1}{2} , - \tfrac{1}{2} ; \tfrac{m_1+1}{2} ; t^2 \bigr).
    \]
    Since $\frac{d}{dt} \log q(t) = \frac{m_1}{t} - \frac{(m_2 - 1) t}{1-t^2}$, the function $q(t)$ attains its maximum at $t_0 = \sqrt{\frac{m_1}{m_1+m_2-1}}$. For simplicity of notation let $N:=m_1+m_2-1$. Simple computation shows 
    \begin{equation}\label{tmp:der-log-weight}
        (\log q)'(t_0)=0, \qquad (\log q)''(t_0) = - \tfrac{2N^2}{m_2-1}.
    \end{equation} 
    We will recenter $q(t)$ around the maximum $t_0$ and rescale to show that this procedure approximates the standard Gaussian. 
    Let us introduce a variable $x$ by
    \begin{equation}\label{eqn:t-t0-transformation}
    t := t_0 + \tfrac{1}{N} \textstyle {\sqrt{\tfrac{m_2-1}{2}}} \, x, \qquad \tilde{F}(x) := F(t), \qquad \tilde{q}(x) := \frac{q(t)}{q(t_0)}.
\end{equation}
    Then, the left and right endpoints $t = 0,1$, correspond respectively to $x_L = - \sqrt{\tfrac{2}{m_2 -1}} N t_0$ and $x_R = \sqrt{\tfrac{2}{m_2-1}} N (1 -t_0)$, which produces
    \[
    x_L^2  = 2N \tfrac{m_1}{m_2-1} \geq 2m_1, \qquad x_R^2 = \tfrac{2(m_2-1)}{(1 + t_0)^2} \geq \tfrac{m_2-1}{2}.
    \]
    In particular, this implies that $x_L \to - \infty$ and $x_R \to + \infty$ when both $m_1,m_2 \to + \infty$.
    In terms of the variable $x$, the equation for $F$ transforms into
    \begin{equation}\label{eq:ode-recentered}
    \tfrac{1}{1- t_0^2} \bigl( (1-t(x)^2) \tilde{q}(x) \tilde{F}'(x) \bigr)' + \tfrac{N+2}{2N} \tilde{q}(x) \tilde{F}(x) = 0.
    \end{equation}
    Here we use the identity $m_1+m_2 = n-2$ when $g=2$.
   
    We use the property \eqref{tmp:der-log-weight} together with a Taylor expansion of $\log q(t)$ around $t_0$, and then apply the change of variables \eqref{eqn:t-t0-transformation} to deduce that $\log \tilde{q}(x) = - \frac{x^2}{2} + O_R(m_1^{- \frac{1}{2}} + m_2^{- \frac{1}{2}})$ for $|x| \leq R$. 
    Because this property also holds for iterated derivatives, we deduce that $\tilde{q}(x) \to \exp ( - \frac{1}{2} x^2)$ in $C^{\infty}_{\textup{loc}}(\bR)$.
    Thus, the limiting ODE of \eqref{eq:ode-recentered} becomes
    \begin{equation}\label{eq:parabolic-cylinder}
        ( e^{- x^2/2} \Psi')' + \tfrac{1}{2} e^{- x^2/2} \Psi = 0, \qquad \text{ or equivalently, } \qquad \Psi'' - x \Psi' + \tfrac{1}{2} \Psi = 0
    \end{equation}
    on compact intervals.
    Moreover, the property $F(0) = 1$ together with its monotonicity and concavity determines the behavior of the limit function as $x_L \to - \infty$: after renormalization, we must have
    \[ 
    \frac{\tilde{F}(x)}{\tilde{F}(0)} \to \Psi(x) := \frac{e^{x^2/4} D_{1/2}(-x)}{D_{1/2}(0)}, 
    \]
    where $D_{1/2}(z)$ denotes the corresponding parabolic cylinder solution to \eqref{eq:parabolic-cylinder}.
    See also the standard form in~\cite{dtmf}*{12.2.4}. 
    Therefore, this $\Psi$ is the solution of equation~\eqref{eq:parabolic-cylinder} with polynomial growth at $-\infty$, given by $\Psi(x) \sim |x|^{\frac{1}{2}}$.
    Letting $z_* \simeq 0.7650$ denote the first zero of $\Psi$ or equivalently $D_{1/2}(-x)$, we deduce from~\eqref{eqn:t-t0-transformation} that the zero $x_*$ of $\tilde{F}(x)$ satisfies $x_* \to z_*$.
    Using the properties $x_L \to -\infty, x_R \to + \infty$, $\tilde{q}(x) \to \exp( - \frac{1}{2} x^2)$, and a change of variables, we conclude that
    \[
    \Theta_{2,m_1,m_2} \to \frac{\int_{- \infty}^{z_*} e^{-x^2/2} \, dx}{\int_{-\infty}^{\infty} e^{-x^2/2} \, dx} = \Phi(z_*)
    \]
    where $\Phi(z)$ is the standard Gaussian cumulative distribution function.
    The value $z_* \simeq 0.7650$ gives $\Phi(z_*) \simeq 0.7779$.
    This completes the proof of our assertion.
\end{proof}

Propositions \ref{proposition:axisymmetric-density} and \ref{prop:m1,m2-to-infty} show that, for all sufficiently large $m,n$,
\[
\Theta_{2 , m, n } < \Theta_{2,1 , m+n - 1 } < \bar{\Theta}_{1, m+n,m+n}.
\]
In particular, for even $n$, we obtain
\[
\Theta_{2, \frac{n}{2} - 1, \frac{n}{2} - 1} < \Theta_{2,1,n-3} < \bar{\Theta}_{1, n-2, n-2}.
\]
The middle density is computed by taking $m_1=1$ in Proposition~\ref{proposition:axisymmetric-density}, where the positive zero $\rho_1$ of ${}_1F_1( - \frac{1}{2}; 1;\rho_1) = 0$ is numerically $\rho_1 \simeq 1.57996$.
Using $\Gamma(1)=1$, the limiting density is therefore 
\[
\lim_{n\to\infty}\Theta_{2,1,n-3} = \lim_{m_2\to\infty}\Theta_{2,1,m_2} = \int_0^{\rho_1}e^{-y}\,dy =1-e^{-\rho_1} \simeq 0.7940.
\]
In fact, the estimates of~\cite{FTW-stability-one-phase}*{\S 4.1} can be used to quantify the inequality $\Theta_{2,1,n-3} < \bar{\Theta}_{1,n-2,n-2}$ with asymptotic bounds in $n$.
Combined with a finite check for all $n \geq 5$, this shows that $\Theta_{2,1,n-3} < \bar{\Theta}_{1,n-2,n-2}$ in dimension $5$ and above.

\begin{table}[htbp]
\centering
\begin{tabular}{c|c|ccccc}
$n$ & $4$ & $5$ & $10$ &$15$ & $20$ & $25$\\
\hline
$\Theta_{2,m_1, m_2}$ & $0.8261$ & $0.8075$ & $0.7917$ & $0.7863$ & $0.7841$ & $0.7827$ \\
\hline
$\Theta_{2,1,n-3}$ & $0.8261$ & $0.8075$ & $0.7959$ & $0.7947$ & $0.7944$ & $0.7942$ \\
\hline
$\bar{\Theta}_{1,n-2,n-2}$ & $0.8183$ & $0.8138$ & $0.8096$ & $0.8091$ & $0.8089$ & $0.8089$ \\
\hline
\end{tabular}
\caption{Numerical comparison of the densities, rounded to four decimal places. For even $n$, the first row uses $m_1=m_2=\frac{n}{2}-1$; for odd $n$, we use $(m_1, m_2) = (\frac{n-3}{2} , \frac{n-1}{2})$.}
\label{table:density}
\end{table}

Still, we conjecture that the De Silva-Jerison cone does minimize the density in low dimensions.
\begin{conjecture}\label{conjecture:dims-3-and-4}
The axisymmetric cone minimizes the density among non-flat cones in $\bR^3$ and $\bR^4$.
\end{conjecture}
The $3$-dimensional result would represent significant progress, establishing an analogue of the Willmore conjecture for the one-phase problem~\cite{willmore-conjecture}.
Parise and Zhu~\cite{parise-zhu} recently proved the uniqueness of the De Silva-Jerison cone in $\mathbb{R}^3$ among solutions with annular spherical trace.
In dimension $3$, the computations~\eqref{eqn:hypergeometric-isoparametric} and~\eqref{eqn:density-of-cone} yield the density of the axisymmetric cone as
\[
f_{ \bS^1 \subset \bS^2 }(t) = 1 - t \tanh^{-1}(t), \qquad \bar\Theta_{1,1,1} = t_{\bS^1 \subset \bS^2} \simeq 0.834 \, .
\]
In dimension $4$, we compare the densities of the axisymmetric De Silva-Jerison cone and the two $O(2) \times O(2)$-invariant solutions (up to ambient isometries) produced by Theorem~\ref{thm:isoparametric-foliation-one-phase-cone}, corresponding to the foliation by Clifford tori $\bS^1 \times \bS^1 \subset \bS^3$.
Using the explicit expressions of the solutions $U_{\bS^1 \subset \bS^3}$ and $\bar{U}_{\bS^1 \times \bS^1 \subset \bS^3}$ from~\eqref{eqn:Phi-M1}, we compute that
\[
\bar\Theta_{1,2,2} \simeq 0.818, \qquad \Theta_{2,1,1} \simeq 0.826, \qquad \bar{\Theta}_{2,1,1} \simeq 0.975,
\]
which supports the conjecture among these explicit examples.
The only other known homogeneous solutions are the ones constructed by Hines-Kolesar-McGrath in $\bS^2$ and $\bS^3$ in~\cite{hines-kolesar-mcgrath}.
Roughly speaking, these solutions are supported on spherical domains $\Omega^{\textup{HKM}}_m$, constructed as a small perturbation of $\bS^2$ or $\bS^3$ with a large number of small geodesic disks removed along the equator.
It follows easily from their construction that the corresponding densities are sufficiently close to 1.

\begin{lemma}\label{lemma:density-HKM}
    The Hines-Kolesar-McGrath solutions in $\mathbb{R}^3$ have density 
    \[
    \Theta^{\textup{HKM}, 3}_m = 1 - O( m^{- \frac{1}{2}} e^{- \sqrt{2m}} ) \qquad \text{as } \; m \to \infty.
    \]
    The corresponding solutions in $\bR^4$ have density
    \[
\Theta^{\textup{HKM},4}_m=1-\tfrac{2\pi^2}{3m^4}F^3+O(m^{-5}) = 1-O(m^{-4}), \qquad \text{ as } m\to \infty,
\]
where $F\in(2.18,2.19)$ is the constant appearing in~\cite{hines-kolesar-mcgrath}*{Theorem 1.2}.
\end{lemma}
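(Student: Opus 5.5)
The density is $1-|\text{removed set}|/|\bS^{n-1}|$, so the plan is to estimate the area of the spherical complement of the positive phase for the Hines--Kolesar--McGrath domains $\Omega^{\textup{HKM}}_m$. By definition \eqref{eqn:define-density},
\[
1-\Theta^{\textup{HKM},n}_m = \frac{\mathcal{H}^{n-1}(\bS^{n-1}\setminus \Omega^{\textup{HKM}}_m)}{\mathcal{H}^{n-1}(\bS^{n-1})}.
\]
The complement consists of $m$ nearly geodesic disks centered at equally spaced points along the equator (a great circle in $\bS^2$; in $\bS^3$, the configuration of \cite{hines-kolesar-mcgrath}). The disks are small perturbations of geodesic balls, so their area is the area of a geodesic ball of radius $r_m$ up to lower-order corrections. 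It therefore suffices to extract the radius $r_m$ from \cite{hines-kolesar-mcgrath}*{Theorem 1.2}, including its leading asymptotics and the order of the perturbation. Then we multiply by $m$.

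\textbf{Case $\bR^4$.} Here the holes are $2$-dimensional-codimension-one balls in $\bS^3$, so the area of a ball of radius $r$ is $\frac{4\pi}{3}r^3(1+O(r^2))$, and $|\bS^3|=2\pi^2$. From \cite{hines-kolesar-mcgrath}, I expect the radius to satisfy $r_m = F\pi^{4/3}\cdot(\text{const})\, m^{-5/3}$, or of a similar form, and I would read off the exact normalization from their statement. Summing $m$ such balls and dividing by $2\pi^2$ should give $\frac{2\pi^2}{3m^4}F^3$. Matching the constant is routine once the radius formula is transcribed: the combination $m\,r_m^3\propto m^{-4}$ forces $r_m\asymp m^{-5/3}$. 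The error $O(m^{-5})$ comes from the $O(r^2)$ correction to the geodesic ball volume and from the size of the perturbation of the boundary, which their construction controls to relative order $O(m^{-1})$.

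\textbf{Case $\bR^3$.} Here the radius in \cite{hines-kolesar-mcgrath} is exponentially small: the logarithmic capacity balance in dimension two gives $\log(1/r_m)\sim \sqrt{2m}$ plus a correction, or an analogous relation. The removed area is then $m\pi r_m^2$, and we must show that it is $O(m^{-1/2}e^{-\sqrt{2m}})$ after inserting their precise asymptotics for $r_m$, including any polynomial prefactor.

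The main obstacle is bookkeeping rather than analysis. We must carefully translate the parameters of \cite{hines-kolesar-mcgrath} (their $\varepsilon$, the number of holes, and the normalization of $F$) into the hole radii, and confirm that the perturbation of each hole from a geodesic disk only affects lower-order terms. I would first write down their radius relation explicitly and then verify the leading constants in both dimensions.
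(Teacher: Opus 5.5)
Your overall strategy is the same as the paper's: write $1-\Theta$ as the normalized volume of the spherical complement, compare that complement with a union of disjoint small geodesic balls, and bound the boundary perturbation. There is a genuine gap, though. The proposal defers all the content to ``transcribing'' Hines--Kolesar--McGrath, and the data you anticipate are wrong in ways that change the computation.

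In $\bS^3$ the removed set is not $m$ balls along a great circle. It consists of $m^2$ balls centered at the lattice $L_m=\{\tfrac{1}{\sqrt2}(e^{2\pi i j/m},e^{2\pi i k/m})\}$ on the Clifford torus, with radius $\tau_m=(\tfrac{m^2}{\pi F}+c_m m)^{-1}$ and $c_m=O(1)$. Your inference that ``$m r_m^3\propto m^{-4}$ forces $r_m\asymp m^{-5/3}$'' reverse-engineers the radius from the conclusion. With the actual radius $\tau_m\approx\pi F/m^2$ and your count of $m$ holes, you would get order $m^{-5}$, not $m^{-4}$.

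The correct count gives $\frac{m^2\cdot\frac{4\pi}{3}\tau_m^3}{2\pi^2}=\frac{2}{3\pi}m^2\tau_m^3=\frac{2\pi^2F^3}{3m^4}+O(m^{-5})$. Disjointness of the balls follows from $\tau_m=O(m^{-2})$ against a lattice spacing $\asymp m^{-1}$. You also misattribute the error term:
\begin{itemize}
\item the ball-volume correction is only $O(m^2\tau_m^5)=O(m^{-8})$;
\item the boundary perturbation, using $\|v_m\|_{C^0(\partial D_m)}\le C\tau_m^{5/2}$ and $\cH^2(\partial D_m)=O(m^2\tau_m^2)$, is $O(m^2\tau_m^{9/2})=O(m^{-7})$;
\item the $O(m^{-5})$ term actually comes from the subleading $c_m m$ in $\tau_m^{-1}$, i.e.\ $\tau_m=\frac{\pi F}{m^2}+O(m^{-3})$.
\end{itemize}

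In $\bS^2$ you omit the two polar holes. The removed set is $m$ equatorial disks of radius $\tau_0=c_m m^{-3/4}e^{-\sqrt{m/2}}$, together with two disks of radius $\tau_2=d_m m^{-1/4}e^{-\sqrt{m/2}}$ around $(0,0,\pm1)$. Both families contribute at leading order, since $m\tau_0^2$ and $\tau_2^2$ are both $\asymp m^{-1/2}e^{-\sqrt{2m}}$; hence $1-\Theta=\frac14(m\tau_0^2+2\tau_2^2)+\dots$.

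Your heuristic $\log(1/r_m)\sim\sqrt{2m}$ is off by a factor of $2$. In fact $\log(1/\tau_0)\sim\sqrt{m/2}$, and $e^{-\sqrt{2m}}$ appears only after squaring. The polynomial rate $m^{-1/2}$ comes from $m\cdot(m^{-3/4})^2$.

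Finally, ``lower order'' for the boundary perturbation needs the quantitative bound $\|v\|_{C^{2,\alpha}(\partial D_m)}\le C\tau_2^{5/2}$, which makes that error $o(m^{-1/2}e^{-\sqrt{2m}})$. With these inputs the argument closes exactly as in the paper. As written, your $\bR^4$ computation yields the wrong power of $m$, and your $\bR^3$ computation leaves part of the removed set unaccounted for.
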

\begin{proof}
The computation is similar in both cases; for brevity, we will use the same notation in dimensions $3$ and $4$, in the respective step.

\smallskip \noindent \textbf{Dimension $3$.}
To simplify notation, let $D_m := \bigcup_{L_0} D(\tau_0) \cup \bigcup_{L_2} D(\tau_2)$, where $D_{L_i}(\tau_i)$ is the $\tau_i$-neighborhood of the set $L_i$ in $\bS^2$.
   Following~\cite{hines-kolesar-mcgrath}*{Theorem 1.1} for large $m$, we have
   \begin{equation}\label{eqn:tau-zero-tau-2}
       \begin{split}
           L_0 &= \Bigl\{ \bigl( \cos \tfrac{2 \pi j}{m}, \sin \tfrac{2 \pi j}{m} , 0 \bigr) \Bigr\}_{j\in \mathbb{Z}}, \qquad L_2 = \{ (0,0,\pm 1)\}, \\
           \tau_0 &= c_m m^{- \frac{3}{4}} e^{- \sqrt{\frac{m}{2}} }, \qquad \tau_2 = d_m m^{- \frac{1}{4}} e^{- \sqrt{\frac{m}{2}}}, \qquad c_m, d_m = O(1). 
       \end{split}
   \end{equation}
Moreover, we can express the positive phase $\Omega^{\textup{HKM,3}}_m$ of the solution as a normal graph over $\bS^2 \setminus D_m$ with graph function $v$ satisfying $\| v \|_{C^{2,\alpha}(\partial D_m)} \leq C \tau_2^{\frac{5}{2}}$, hence by \eqref{eqn:tau-zero-tau-2}
   \[
   1 - \Theta_m^{\textup{HKM},3}  = \tfrac{1}{4} (m \tau_0^2 + 2 \tau_2^2) + O( m \tau_0^4 + \tau_2^4 + \tau_2^{\frac{5}{2}}) = \tfrac{1}{4} m^{- \frac{1}{2}} e^{- \sqrt{2m}} (c_m^2 + 2d_m^2) + o ( m^{- \frac{1}{2}} e^{- \sqrt{2m}}).
   \]
\noindent \textbf{Dimension $4$.}
   For the construction of~\cite{hines-kolesar-mcgrath}*{Theorems 1.2 and 6.28} in $\bR^4$, the domain $\Omega^{\textup{HKM}}_m$ is now a normal perturbation over $\bS^3 \setminus D_m$, with $D_m := D_{L_m}(\tau_m)$ for
   \[
L_m= \Bigl\{ \tfrac{1}{\sqrt{2}}\bigl(e^{\frac{2\pi i j}{m}},e^{\frac{2\pi i k}{m}}\bigr) \Bigr\}_{j,k\in\bZ} \subset\bS^3, \qquad \tau_m = \bigl( \tfrac{m^2}{\pi F}+c_m m \bigr)^{-1}, \qquad c_m = O(1).
\]
The boundary of $\Omega^{\textup{HKM,4}}_m$ is a normal graph over $\partial D_m$ with graph function $v_m$ which satisfies $\|v_m\|_{C^{2,\alpha}(\partial D_m) }\leq C\tau_m^{\frac{5}{2}}$. 
Distinct points of $L_m$ are separated by a distance comparable to $m^{-1}$, whereas $\tau_m=O(m^{-2})$, so for large $m$, the set $D_m$ is the disjoint union of $m^2$ geodesic balls of radius $\tau_m$.
The volume of a geodesic ball of radius $\tau$ in the unit sphere $\bS^3$ is
\[
\cH^3(B_{\bS^3}(\tau))=4\pi\int_0^\tau\sin^2r\,dr=2\pi(\tau-\sin\tau\cos\tau)= \tfrac{4\pi}{3}\tau^3+O(\tau^5);
\]
and $\cH^2(\partial D_m)=4\pi m^2\sin^2\tau_m=O(m^2\tau_m^2)$.
The estimate on $v_m$ and $\cH^3(\bS^3) = 2\pi^2$ therefore give
\begin{align*}
\left|\cH^3(\bS^3\setminus\Omega^{\textup{HKM},4}_m)-\cH^3(D_m)\right| \leq C\|v_m\|_{C^0(\partial D_m)}\cH^2(\partial D_m)=O(m^2\tau_m^{\frac{9}{2}}), \\
1-\Theta^{\textup{HKM},4}_m= \tfrac{m^2}{\pi}(\tau_m-\sin\tau_m\cos\tau_m)+O(m^2\tau_m^{\frac{9}{2}}) = \tfrac{2}{3\pi}m^2\tau_m^3+O(m^2\tau_m^{\frac{9}{2}}+m^2\tau_m^5).
\end{align*}
Finally, the desired estimate follows by substituting $\tau_m=\frac{\pi F}{m^2}\left(1+\frac{\pi F c_m}{m}\right)^{-1}=\frac{\pi F}{m^2}+O(m^{-3})$.
\end{proof}

\bibliography{ref}

\end{document}